\documentclass[11pt, a4paper]{article}
\usepackage[T1]{fontenc}
\usepackage{HongLiu}
\hypersetup{hidelinks}

\usetikzlibrary{arrows}

\definecolor{commentpurple}{rgb}{0.5,0,0.5}

\newtheorem{problem}[theorem]{Problem}

\newtheorem*{remark}{Remark}

\numberwithin{equation}{section}

\title{\Large \bf Strong counterexamples to Mubayi's supersaturation conjecture in every uniformity}

\author{
Heng~Li\thanks{School of Mathematics, Shandong University, Jinan, China, and Extremal Combinatorics and Probability Group (ECOPRO), Institute for Basic Science (IBS), Daejeon, South
Korea. Supported by the National Natural Science Foundation of China (12501487), by China Scholarship
Council and the Institute for Basic Science (IBS-R029-C4). Email: \texttt{heng.li@sdu.edu.cn}.}
\and
Hong~Liu\thanks{Extremal Combinatorics and Probability Group (ECOPRO), Institute for Basic Science (IBS), Daejeon, South
Korea. Supported by the Institute for Basic Science (IBS-R029-C4). Email: \texttt{hongliu@ibs.re.kr}.}
\and 
Xizhi~Liu\thanks{School of Mathematical Sciences, University of Science and Technology of China, Hefei, China. Supported by the Excellent Young Talents Program (Overseas) of the National Natural Science Foundation of China. Email:~\texttt{liuxizhi@ustc.edu.cn}.}
\and
Jing~Wang\thanks{School of Mathematics and Statistics, Henan Normal University, Henan, China, and Extremal Combinatorics and Probability Group (ECOPRO), Institute for Basic Science (IBS), Daejeon, South
Korea. Supported by the National Natural Science Foundation of China (12301437) and the Institute for Basic Science (IBS-R029-C4). Email:~\texttt{wangjing2022@htu.edu.cn}.}
}

\date{\today}

\begin{document}
\maketitle

\begin{abstract}
The supersaturation problem asks, for a fixed $r$-graph $\mathcal F$, for the
minimum number of copies of $\mathcal F$ in an $n$-vertex $r$-graph with
$\ex(n,\mathcal F)+q$ edges.  Mubayi conjectured a local form of supersaturation
under a stability hypothesis: if $\mathcal F$ is non-$r$-partite and stable,
meaning roughly that the extremal $\mathcal F$-free construction is unique and
all near-extremal $\mathcal F$-free $r$-graphs are close to it, then this minimum
should be at least $q c(n,\mathcal F)$, where $c(n,\mathcal F)$ is the minimum
number of copies created by adding one edge to the extremal $\mathcal F$-free
$r$-graph.

We disprove this conjectured local lower bound in every uniformity.  For every
$r\ge2$ and every $K>1$, we construct a stable $r$-graph $\mathcal F$ such that, for all sufficiently large $n$ and every
$1\le q\le \delta n$, there is an $n$-vertex $r$-graph with
$\ex(n,\mathcal F)+q$ edges and at most $K^{-1}q c(n,\mathcal F)$ copies of
$\mathcal F$.  Thus the conjectured lower bound can already fail at $q=1$,
and the failure can be by an arbitrarily large constant factor in every
uniformity.
\end{abstract}

\section{Introduction}\label{introduction}

The Tur\'an problem, a central topic in extremal combinatorics, asks for the
maximum number of edges in an $n$-vertex graph, or more generally in an
$r$-uniform hypergraph, which avoids a fixed forbidden configuration.  In the
graph case, Mantel's theorem determines the extremal number for triangles, and
Tur\'an's theorem extends this to complete graphs $K_{r+1}$, with the balanced
$r$-partite Tur\'an graph as the extremal graph~\cite{Mantel1907,Turan1941}.

Supersaturation concerns what happens just beyond this extremal threshold.  The
classical theorem of Rademacher, recorded by Erd\H{o}s~\cite{Erdos1955}, says
that every graph with $\lfloor n^2/4\rfloor+1$ edges contains at least
$\lfloor n/2\rfloor$ triangles.  Subsequent work of Erd\H{o}s
\cite{Erdos1955,Erdos1962} and Lov\'asz--Simonovits
\cite{LovaszSimonovits1976,LovaszSimonovits1983} developed exact
supersaturation results for cliques.  The general supersaturation theorem of
Erd\H{o}s and Simonovits~\cite{ErdosSimonovits1983} shows that any fixed
positive excess over the Tur\'an density forces a positive density of copies of
the forbidden graph.  The question studied here is more local: when only $q$
edges are added above extremality, should the cheapest construction simply add
the $q$ cheapest new edges to the extremal example?

We now introduce the notation needed to state this local question.  Fix
$r\ge2$.  An \emph{$r$-uniform hypergraph}, or \emph{$r$-graph}, is a family of
$r$-subsets of a vertex set.  We identify a hypergraph with its edge set, and
hence write $|\mathcal H|$ for the number of edges of $\mathcal H$.  For a fixed
$r$-graph $\mathcal F$, its \emph{Tur\'an number} is
\[
        \ex(n,\mathcal F)
        :=
        \max\big\{|\mathcal H|:\ |V(\mathcal H)|=n
        \text{ and } \mathcal H \text{ is } \mathcal F\text{-free}\big\}.
\]
Here $\mathcal H$ is \emph{$\mathcal F$-free} if it contains no copy, not
necessarily induced, of $\mathcal F$.  Throughout this paper, a copy means a
labelled embedding.  For $r$-graphs $\mathcal F$ and $\mathcal G$, let
$N_{\mathcal F}(\mathcal G)$ denote the number of copies of $\mathcal F$ in
$\mathcal G$, and let $N_{\mathcal F}(\mathcal G;e)$ denote the number of such
copies in which $e$ is the image of some edge of $\mathcal F$.

Suppose that, for all sufficiently large $n$, the extremal $n$-vertex
$\mathcal F$-free $r$-graph is unique, and denote it by
$\mathrm{EX}(n,\mathcal F)$.  Define
\[
        c(n,\mathcal{F})
        :=
        \min\left\{
        N_{\mathcal{F}}(\mathrm{EX}(n,\mathcal F)\cup\{e\};e):
        e\in \binom{V(\mathrm{EX}(n,\mathcal F))}{r}
        \setminus \mathrm{EX}(n,\mathcal F)
        \right\},
\]
and
\[
        h_{\mathcal{F}}(n,q)
        :=
        \min\left\{
        N_{\mathcal F}(\mathcal H):
        |V(\mathcal H)|=n
        \text{ and }
        |\mathcal H|=\ex(n,\mathcal F)+q
        \right\}.
\]
Thus $c(n,\mathcal F)$ is the one-edge cost above the extremal construction,
whereas $h_{\mathcal F}(n,q)$ is the true minimum at excess $q$.

Mubayi's work on counting substructures
\cite{Mubayi2010,Mubayi2013,Mubayi2009} established local lower bounds of this
type in several important settings, including color-critical graphs and certain
non-$r$-partite $r$-graphs.  These results suggest the following natural
principle: if the extremal $\mathcal F$-free construction is sufficiently rigid,
then the first-order cost of adding $q$ edges should be at least $q$ times the
least possible cost of adding one edge.  This led to the following conjecture.

\begin{conjecture}[Mubayi \cite{Mubayi2013}]
\label{conj:Mubayi}
Let $\mathcal{F}$ be a stable non-$r$-partite $r$-graph. Then, for every positive integer $q$ and all sufficiently large $n$, every $n$-vertex $r$-graph with $\ex(n,\mathcal{F})+q$ edges contains at least $q\,c(n,\mathcal{F})$ copies of $\mathcal{F}$.
\end{conjecture}

Here, stable means that, for all sufficiently large $n$, the extremal $n$-vertex $\mathcal F$-free $r$-graph is unique and every $n$-vertex $\mathcal F$-free $r$-graph with $(1-o(1))\ex(n,\mathcal F)$ edges differs from it in only $o(n^r)$ edges.

For graphs, Ma and Yuan~\cite{MaYuan2025} recently constructed infinitely many counterexamples of arbitrary chromatic number at least four.  They also asked whether the one-edge equality $h_{\mathcal F}(n,1)=c(n,\mathcal F)$ holds for every graph $\mathcal F$ containing a cycle, and left open both the hypergraph case $r\ge3$ and the graph case of chromatic number three.  Subsequent graph constructions of Chen--Yuan~\cite{ChenYuanStrongMaYuan} and of the first author with Li and Ma~\cite{LLMUP} gave further strong negative answers to the one-edge question, but those examples do not satisfy the stability hypothesis in Conjecture~\ref{conj:Mubayi}.

Our main result shows that Conjecture~\ref{conj:Mubayi} fails in a strong
quantitative sense, and in every uniformity.

\begin{theorem}\label{thm:main}
For every integer $r\ge 2$ and every real number $K>1$, there is a stable
non-$r$-partite $r$-graph $\mathcal{F}$ with the following property. There exist constants
$\delta_{\mathcal{F}},n_{\mathcal{F}}>0$ such that, for every $n\ge n_{\mathcal{F}}$ and every integer $q$
with $1\le q\le \delta_{\mathcal{F}} n$, it holds that 
$h_{\mathcal{F}}(n,q)\le K^{-1}\,q c(n,\mathcal{F})$.
\end{theorem}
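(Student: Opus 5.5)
We construct $\mathcal F$ as a disjoint union (or a loosely joined combination) of a rigid "core" that forces stability and a gadget that makes the one-edge cost large but lets edges be added cheaply in a different place. The natural template in uniformity $r$ is the expanded clique or generalized triangle: its extremal construction is a complete balanced $\ell$-partite $r$-graph $T_r(n,\ell)$ (or the $r$-graph of all edges meeting each part in at most one vertex), and stability is known (Pikhurko, Mubayi). We take $\mathcal F = \mathcal F_0 \sqcup \mathcal G$, where $\mathcal F_0$ is such a stable non-$r$-partite core and $\mathcal G$ is an $r$-partite $r$-graph on $t$ vertices. Since $\mathcal G$ is $r$-partite and $T_r(n,\ell)$ contains $\Theta(n^{t})$ copies of it, adding one edge $e$ to the extremal graph creates $N_{\mathcal F_0}(\cdot;e)$ copies of $\mathcal F_0$. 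Each of these extends to about $n^{t}$ copies of $\mathcal F$, and $\mathcal G$ contributes additional copies through $e$ too. The parameter $t$, and the internal structure of $\mathcal G$, are chosen depending on $K$. Because $\mathcal G$ has Turán density zero, the Turán number and the stability of $\mathcal F$ agree with those of $\mathcal F_0$ up to the removal lemma. We would prove this first, checking that uniqueness of the extremal graph persists for large $n$: a graph with $\ex(n,\mathcal F_0)+1$ edges contains $\mathcal F_0$, and the disjoint copy of $\mathcal G$ is found in the remaining dense part.

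Next we compute $c(n,\mathcal F)$. For each non-edge $e$ of $\mathrm{EX}(n,\mathcal F)$, there are two contributions. The first is copies in which $e$ plays an edge of $\mathcal F_0$; these number $\Theta(n^{v(\mathcal F_0)-r}\cdot n^{t})$, and this is the dominant term. The second is copies in which $e$ plays an edge of $\mathcal G$; these require a copy of $\mathcal F_0$ elsewhere, which is impossible in $\mathrm{EX}\cup\{e\}$ unless $e$ is used, so there are none. Thus $c(n,\mathcal F)=(1+o(1))\,a\,n^{v(\mathcal F)-r}$ for an explicit constant $a$, computed from the cheapest missing edge type (typically an edge with two vertices in one part).

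The counterexample graph $\mathcal H$ is not $\mathrm{EX}\cup\{q\text{ edges}\}$. We delete a small set of edges of the extremal graph and add a larger set, so that the net excess is $q$. The idea is to move $s$ vertices, with $s$ growing with $q$, so that the new edges all lie inside a small set $S$. Each new edge then lies in few copies of $\mathcal F_0$, because copies of $\mathcal F_0$ need several "bad" edges simultaneously, or lie in sparse links. Concretely, we would pick a vertex set $S$ of size about $\sqrt{q}$ or a constant. We make one part slightly unbalanced by moving vertices, which costs only $O(n)$ edges per moved vertex. We then add a clique-like configuration of bad edges inside one part, chosen so that the number of copies of $\mathcal F_0$ through each bad edge is smaller than $a n^{v(\mathcal F_0)-r}$ by the factor $K$. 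The reason this should work is that edges inside a part whose vertices have deficient degree, coming from the deletions, have fewer extensions. The ratio depends on how much each degree is depressed, so the gadget $\mathcal G$, or the choice of $\mathcal F_0$ with a high-degree vertex whose link dominates, must amplify a constant degree loss into a factor $K$. For example, if every copy of $\mathcal F_0$ through $e$ requires a vertex of $e$ to have degree $d$ in the copy, then the count scales like $(\deg/\text{normal})^{d}$, and with $d$ large a degree loss of a fixed fraction gives a factor $K$. The constraint $q\le\delta n$ keeps the total number of deleted edges linear in $q$ and still outweighed.

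The main obstacle is this balancing step. We must lower the degrees of the endpoints of the bad edges by a constant fraction, which requires deleting about $n$ edges per low vertex, yet keep the net excess $+q$ and total copies at most $K^{-1}qc$. We would first try making the low vertices lie in $S$, with $|S|$ fixed, and deleting all edges between $S$ and one other part. The added bad edges then number about $|S|\cdot n$ in the $r$-graph version, since they are edges containing a vertex of $S$ and lying inside a part, and this more than compensates the deletions. Choosing $|S|$ and the number of added edges appropriately then realizes every excess $q\in[1,\delta n]$. The remaining work is a careful count showing that copies using two or more bad edges are of lower order.
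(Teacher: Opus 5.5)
There is a genuine gap: neither ingredient of your construction can produce a loss by a factor $K$.

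The gadget $\mathcal G$ is inert. Since $\mathcal G$ is $r$-partite, in every $r$-graph that is $o(n^r)$-close to the extremal one, the number of copies of $\mathcal G$ avoiding a bounded vertex set is $(1+o(1))\gamma n^{v(\mathcal G)}$ for one fixed $\gamma>0$. An $r$-graph with $\mathrm{ex}(n,\mathcal F_0)+q$ edges that is far from extremal contains $\Omega(n^{v(\mathcal F_0)})$ copies of $\mathcal F_0$, by removal and stability. Hence $N_{\mathcal F}(\mathcal H)\ge(1-o(1))\gamma n^{v(\mathcal G)}N_{\mathcal F_0}(\mathcal H)$, while $c(n,\mathcal F)=(1+o(1))\gamma n^{v(\mathcal G)}c(n,\mathcal F_0)$. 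So $\mathcal F$ beats $q\,c(n,\mathcal F)$ by a factor $K$ only if the core $\mathcal F_0$ already does, and the choices of $t$ and $\mathcal G$ buy you nothing. You are back to needing a core that is itself a counterexample, which is the whole problem. Moreover, the cores you propose are edge-critical (deleting one edge makes them embeddable in the extremal construction), the hypergraph analogue of colour-criticality, which is the setting of Mubayi's positive results.

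The degree-depression step also cannot be paid for. In a complete-partite extremal graph, the copies through a new edge $e$ have no bounded bottleneck. Cutting them by a constant factor therefore means deleting a constant fraction of the degree of a vertex of $e$, i.e.\ $\Theta(n^{r-1})$ edges per low vertex (not ``about $n$'' when $r\ge3$). To keep the excess equal to $q$ you must then add $q+\Theta(n^{r-1})$ edges while spending at most $K^{-1}q\,c(n,\mathcal F)$ copies. For small $q$, already at $q=1$, almost all added edges would have to be essentially copy-free. Your recipe achieves that only by re-partitioning: for $r=2$, deleting all edges from $S$ to one part and adding all edges from $S$ into its own part just moves $S$ to the other part. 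That gives a less balanced complete multipartite graph with fewer edges, not an excess.

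The missing idea is a \emph{bounded bottleneck} through which all one-new-edge copies must pass. The paper takes $\mathcal F_{r,t}$, whose unique extremal graph $\mathcal J_{t-1}(n)$ has a set $S$ of $t-1$ universal vertices. This requires the new exact result, Theorem~\ref{thm:exact}; it is not inherited from a known core. By Lemma~\ref{lem:one-edge-active}, every copy created by a partite non-edge $e$ uses all of $S$, one vertex in each inactive fan. The blowup transversal edges then force all but a $\rho_t=c_r(1-\theta_r)^{t-1}$ fraction of these copies to contain an $r$-set consisting of one vertex of $e$ and $r-1$ vertices of $S$ (Lemma~\ref{lem:star-deletion}). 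Deleting this $S$-star costs only $\binom{t-1}{r-1}$ edges per vertex of a set $Z$ of size $O_{r,t}(q^{1/r})$ that carries enough non-edges of the cheapest type. So at most $b_0q$ edges are added, with $b_0=O_r(t^r)$, and the exponential decay of $\rho_t$ in $t$ beats this polynomial factor. Copies using two new edges number $O(q^2n^{m_{\mathcal F}-1})$ and are absorbed by taking $q\le\delta n$. For a core whose extremal graph is a balanced complete partite $r$-graph there is no such bounded set, the compensation cost is not $O(q)$, and the construction cannot work.
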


The forbidden hypergraph used in the proof is the semi-blowup fan $\mathcal F_{r,t}$, defined in Section~\ref{sec:prelim}.  For $r=2$, this gives a stable $3$-chromatic graph counterexample; for $r\ge3$, it gives the first hypergraph counterexamples of this type.  The parameter $t$ controls the strength of the failure: taking $t$ large makes the surviving proportion of one-new-edge copies smaller than any
prescribed constant factor.

To verify that these examples genuinely fall within the scope of
Conjecture~\ref{conj:Mubayi}, we need the exact Tur\'an theorem for
$\mathcal F_{r,t}$ and uniqueness of the extremal construction. The
following theorem supplies this missing exact statement. Let $\mathcal T_r^{(r)}(m)$ denote the complete balanced $r$-partite $r$-graph
on $m$ vertices, and write $t_r^{(r)}(m):=|\mathcal T_r^{(r)}(m)|$.  For
$s\ge0$, put
\[
        \mathcal J_s(n):=K_s^{(r)}\vee \mathcal T_r^{(r)}(n-s),
\]
where $K_s^{(r)}$ denotes the complete $r$-graph on an $s$-vertex set, with no
edges if $s<r$, and the \emph{join} adds every $r$-set meeting both sides.
Thus $\mathcal J_s(n)$ is obtained from $\mathcal T_r^{(r)}(n-s)$ by adjoining
$s$ full-degree vertices.

\begin{theorem}\label{thm:exact}
Fix integers $r\ge 2$ and $t\ge r$. For all sufficiently large $n$, the unique
extremal $n$-vertex $\mathcal{F}_{r,t}$-free $r$-graph is $\mathcal{J}_{t-1}(n)$. 
In particular,
\[
        \ex(n,\mathcal{F}_{r,t})=|\mathcal{J}_{t-1}(n)|
        =
        \tbinom nr-\tbinom{n-t+1}{r}+t_r^{(r)}(n-t+1).
\]
\end{theorem}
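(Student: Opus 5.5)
The plan is the standard Simonovits stability route: show $\mathcal J_{t-1}(n)$ is $\mathcal F_{r,t}$-free, prove a stability statement, and then do exact cleaning. It relies on two structural properties of the semi-blowup fan from Section~\ref{sec:prelim}, both of which still have to be verified:
\begin{itemize}
\item[(P1)] $\mathcal F_{r,t}$ cannot be made $r$-partite by deleting $t-1$ of its vertices.
\item[(P2)] $\mathcal F_{r,t}$ does embed into a complete $r$-partite $r$-graph with $t$ extra vertices. More precisely, $t$ vertices, each with linear-size ``bad'' link into a near-Tur\'an structure, together with that structure contain $\mathcal F_{r,t}$. Likewise, $t-1$ such vertices plus one ``bad'' edge inside a part contain it.
\end{itemize}

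\textbf{Freeness and stability.} Freeness of $\mathcal J_{t-1}(n)$ is immediate from (P1). A copy of $\mathcal F_{r,t}$ uses at most $t-1$ of the universal vertices. Deleting them leaves a subgraph of an $r$-partite $r$-graph, contradicting (P1).

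For stability, I would use the fact that $\mathcal F_{r,t}$ is contained in a bounded blowup of a non-$r$-partite $r$-graph $\mathcal F_0$ that is known to be stable with Tur\'an density equal to that of $\mathcal T_r^{(r)}$. Presumably the ``fan'' is built from the generalized triangle or expanded clique, whose stability is classical. Then the hypergraph removal lemma plus the blowup (supersaturation) argument transfers stability from $\mathcal F_0$ to $\mathcal F_{r,t}$. This shows that every $\mathcal F_{r,t}$-free $\mathcal H$ with $(1-o(1))\ex(n,\mathcal F_{r,t})$ edges is $o(n^r)$-close to $\mathcal T_r^{(r)}(n)$. The same statement gives the stability clause in Theorem~\ref{thm:main}, since $\mathcal J_{t-1}(n)$ is also $o(n^r)$-close to $\mathcal T_r^{(r)}(n)$.

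\textbf{Exact result.} Let $\mathcal H$ be $\mathcal F_{r,t}$-free with $|\mathcal H|\ge|\mathcal J_{t-1}(n)|$.
\begin{enumerate}
\item Delete vertices of degree below $(1-\varepsilon)\,r\,t_r^{(r)}(n)/n$ one at a time. The standard counting shows that only $o(n)$ vertices are removed, and if any were removed the remaining graph would have strictly too many edges. So we may assume a minimum-degree condition, and it suffices to handle this case.
\item Fix a partition $V_1\cup\dots\cup V_r$ that maximizes the number of crossing edges. It is near-balanced, and only $o(n^r)$ edges are missing across it.
\item Let $W$ be the set of vertices that lie in linearly many edges meeting some part at least twice, i.e.\ ``bad'' edges. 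By (P2) and a greedy embedding that uses the near-completeness across the parts, $|W|\ge t$ would yield a copy of $\mathcal F_{r,t}$. Hence $|W|\le t-1$.
\item Show that $\mathcal H-W$ has no bad edge. A single bad edge outside $W$, together with the $\le t-1$ vertices of $W$ and the dense $r$-partite host, embeds $\mathcal F_{r,t}$, again by (P2).
\end{enumerate}
Then $\mathcal H\subseteq K_{|W|}^{(r)}\vee(\text{$r$-partite $r$-graph on }V\setminus W)$. The number of edges of such a graph is maximized exactly when $|W|=t-1$, the part $W$ is complete, all sets meeting $W$ are present, and the partition of $V\setminus W$ is balanced and complete. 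Because $|W|<t-1$ costs $\Theta(n^{r-1})$ edges, this yields both the bound and uniqueness of $\mathcal J_{t-1}(n)$.

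\textbf{Main obstacle.} The hardest step is (P2) together with step 3. One must show that $t$ vertices, each only having $\Omega(n^{r-1})$ bad edges (not full degree), suffice to build the semi-blowup fan disjointly. This needs a careful choice of common neighbourhoods inside the parts, using the minimum-degree condition to make the links of the $W$-vertices jointly dense. I would first try a dependent-random-choice style argument: choose the fan's shared core inside the common crossing neighbourhood of all $W$-vertices, then extend each blade greedily. This works provided $t$ and the blowup size are fixed while $n\to\infty$.
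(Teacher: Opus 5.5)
\textbf{The overall route matches the paper's, but the cleaning step is circular and misses a quantitative trap.}

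Your architecture is the paper's. $\mathcal J_{t-1}(n)$ is free by exactly your (P1) argument. Stability comes from removal plus blowup, where $\mathcal F_0$ is simply the fan $\mathcal F_r$ itself (stable by Mubayi--Pikhurko, and $\mathcal F_{r,t}\subseteq\mathcal F_r[t]$). Then come a maximal partition, a heavy set $W$ with $|W|\le t-1$, and cleaning. Your vertex-deletion reduction to a minimum-degree condition is a valid substitute for the paper's gap-sequence trick ($\Delta_n\ge\Delta_{n-1}\Rightarrow\Delta_n=0$). Either one gives $d_B(v)\ge d_M(v)-\varepsilon' n^{r-1}$ at every vertex with $\varepsilon'$ small, which is what the later steps need.

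\textbf{The circularity.} Step~4 invokes ``the $\le t-1$ vertices of $W$'' plus one bad edge. If $|W|<t-1$, this gives only $|W|+1<t$ fans, so nothing yet rules out bad edges outside $W$. You then deduce $|W|=t-1$ from $\mathcal H\subseteq K^{(r)}_{|W|}\vee(\cdots)$, that is, from step~4 itself.

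What is missing is a bound on the bad edges disjoint from $W$ that does not use $W$ at all. Among $rt$ pairwise disjoint such edges, $t$ share a repeated part $V_{i_0}$. These alone assemble $\mathcal F_{r,t}$: each serves as an $i_0$th petal, with its two $V_{i_0}$-vertices acting as center and $i_0$th transversal vertex. Hence their matching number is below $rt$. Every such edge meets the fewer than $r^2t$ non-heavy vertices of a maximum matching, so $|\mathcal B_0|\le r^2t\,\delta n^{r-1}$. Now $|\mathcal H|\le|\mathcal J_{|W|}(n)|+r^2t\,\delta n^{r-1}$, and $\delta\ll c_{r,t}$ forces $|W|=t-1$. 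Only then is the single-bad-edge argument available.

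\textbf{The quantitative trap in the single-bad-edge step.} To make all $t^r$ blowup transversal edges present by counting, the bad edge's prescribed transversal vertex $u_e$ must satisfy $d_M(u_e)\ll\gamma^{rt}n^{r-1}$. Here $\gamma n$ is the size of the candidate sets supplied by the heavy vertices. But $u_e$ is merely non-heavy, so you only know $d_M(u_e)=O(\delta+\varepsilon)n^{r-1}$. Meanwhile, a vertex with $d_B\ge\delta n^{r-1}$ in general only supplies $\gamma=O(\delta)$, since its bad link may live on $O(\delta n)$ vertices of its own part.

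The paper breaks this as follows. Once $|W|=t-1$, the $r$-sets of the join structure $\mathcal J(W,\sigma)$ (all $r$-sets meeting $W$, plus crossing sets outside $W$) that are missing from $\mathcal H$ number at most $|\mathcal B_0|=O(\delta n^{r-1})$. So each $w\in W$ has almost full degree, and $d_B(w)\ge\lambda_r n^{r-1}$ with $\lambda_r$ independent of $\delta$. The heavy packages are then rebuilt at level $\lambda_r$.

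\textbf{The route to (P2).} Going through a common crossing neighbourhood of the $W$-vertices and dependent random choice targets the wrong object. In $\mathcal F_{r,t}$ a heavy vertex is attached only to its own fan, not to the shared transversal core. The paper instead gives each heavy $v$ a seed set $C\ni v$ with $|C|\le2$ and linear candidate sets in every part, splitting into two cases.
\begin{itemize}
\item If a positive proportion of the bad edges at $v$ repeat in another part $V_j$, a suitable $u\in V_j$ lying in few sparse cross-pairs becomes the center, and $v$ lies in its $j$th petal.
\item Otherwise they repeat in $v$'s own part, and $v$ is the center. Maximality of the partition potential $f_{\mathcal H}(\sigma)$ then gives linear codegree neighbourhoods in the other parts.
\end{itemize}
The transversal vertices of all $t$ fans are then chosen by plain counting against the at most $\alpha n^r$ missing crossing edges, and the fans are completed greedily.
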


For $r\ge3$, Theorem~\ref{thm:exact} extends a result from
\cite{HouLiLiuYuanZhang2025}.  In their density Corr\'adi--Hajnal setting, the
same construction $\mathcal J_{t-1}(n)$ is the extremal $n$-vertex $r$-graph
with no $t$ vertex-disjoint copies of the fan $\mathcal F_r$.  The present
theorem forbids only the larger semi-blowup configuration $\mathcal F_{r,t}$,
rather than all $t$ disjoint fans, and still obtains the same unique extremal
construction.

We now describe the idea behind Theorem~\ref{thm:main}.  Start with the unique
extremal construction $\mathcal J_{t-1}(n)$.  Let $S$ be its set of $t-1$
full-degree vertices, and let $V_1\cup\cdots\cup V_r$ be the balanced
$r$-partition of the remaining vertices.  The only non-edges among the partite
vertices are the non-crossing $r$-sets.  We add new non-crossing edges inside a
small set $Z$ of partite vertices, but we simultaneously delete the $S$-star of
$Z$,
\[
        \mathcal S(Z)
        :=
        \bigl\{\{z\}\cup Y:\ z\in Z,\ Y\in \tbinom{S}{r-1}\bigr\}.
\]
The number of deleted edges is only linear in $|Z|$, and we compensate for these
deleted edges by adding the same number of extra partite non-edges, plus the
desired excess $q$.

The key point is that the deleted $S$-star is extremely efficient at destroying
copies created by a new edge.  If a copy of $\mathcal F_{r,t}$ uses exactly one
new partite non-edge, then one fan is active and uses this new edge.  The other
$t-1$ fans cannot live entirely in the $r$-partite part of
$\mathcal J_{t-1}(n)$, because each fan is non-$r$-partite.  Hence these
inactive fans are forced to use the vertices of $S$ bijectively.  The
semi-blowup transversal edges then impose strong compatibility conditions among
the inactive fans.  These conditions create many opportunities for the copy to
use an edge of $\mathcal S(Z)$, and once such an edge is needed, the copy is
destroyed by the deletion.  Repeating this across the $t-1$ inactive fans gives
an exponentially small upper bound on the proportion of one-new-edge copies that
survive.

There are two technical difficulties.  First, the construction must work already
for $q=1$, so the edge count has to be adjusted exactly rather than only
asymptotically.  Second, after many new non-crossing edges are added inside
$Z$, one must control copies using two or more new edges.  These copies are
shown to be lower order for $q\le\delta n$, and are absorbed by choosing
$\delta$ sufficiently small.  The novelty of the construction is that the
semi-blowup fan combines a rigid exact Tur\'an structure with a highly
concentrated local family of one-edge completions, allowing the star deletion
near $S$ to defeat the expected lower bound by an arbitrary constant factor.

The paper is organized as follows.  Section~\ref{sec:prelim} fixes notation and
defines $\mathcal F_{r,t}$.  Section~\ref{sec:ss} proves
Theorem~\ref{thm:main}.  Section~\ref{sec:exact} records the stability tools and
proves Theorem~\ref{thm:exact}.  Section~\ref{sec:remarks} ends with a short
discussion and an open problem.

\section{Preliminaries}\label{sec:prelim}

This section collects the notation used in both main arguments. We first set
up the standard operations on $r$-graphs and the join construction
$\mathcal J_s(n)$, then define the fan configurations.

For a positive integer $m$, write $[m]:=\{1,\dots,m\}$. %All asymptotic notation may depend on the fixed forbidden $r$-graph under discussion. 
For an $r$-graph $\mathcal{F}$ and a positive
integer $s$, the \emph{blowup} $\mathcal{F}[s]$ is obtained by replacing each vertex of $\mathcal{F}$ by
an independent cluster of size $s$ and each edge by the complete $r$-partite
$r$-graph across the corresponding clusters.

We write $v(\mathcal H):=|V(\mathcal H)|$. An $r$-graph is \emph{$r$-partite} if its vertex set can be partitioned into
$r$ parts so that every edge has one vertex in each part. With respect to
a fixed $r$-partition, an $r$-set is called \emph{crossing} if it has one vertex
in each part. We keep the notation $\mathcal T_r^{(r)}(m)$ for the complete balanced
$r$-partite $r$-graph on $m$ vertices.

For an $r$-graph $\mathcal H$ and a vertex set $X\subseteq V(\mathcal H)$, the
\emph{induced subgraph} $\mathcal H[X]$ has vertex set $X$ and edge set $\{e\in \mathcal H:e\subseteq X\}$. 
If $U\subseteq V(\mathcal H)$, we write
$\mathcal H-U:=\mathcal H[V(\mathcal H)\setminus U]$ for the induced $r$-graph
obtained by deleting the vertices of $U$.
If $\mathcal E$ is a family of $r$-sets, then
$\mathcal H-\mathcal E$ denotes the $r$-graph obtained from
$\mathcal H$ by deleting the edges in
$\mathcal E\cap \mathcal H$ and keeping the vertex set unchanged.

For $s\ge 0$, keep the notation $\mathcal J_s(n):=K_s^{(r)}\vee \mathcal T_r^{(r)}(n-s)$. 
We call these $s$ full-degree vertices \emph{exceptional} and the remaining vertices the
\emph{partite vertices}. If $S$ is the exceptional set and
$V_1\cup\cdots\cup V_r$ is the balanced partition of the partite vertices, then $\mathcal J_s(n)-S=\mathcal T_r^{(r)}(n-s)$ with parts $V_1,\ldots,V_r$. Thus every $r$-set meeting $S$ is an edge
of $\mathcal J_s(n)$. The non-edges contained in
$V(\mathcal J_s(n))\setminus S$ are precisely the non-crossing $r$-sets with
respect to this partition; we call them \emph{partite non-edges}.

We now introduce the forbidden configurations used throughout the paper.

\begin{figure}[H]
    \centering

\tikzset{every picture/.style={line width=0.75pt}} %set default line width to 0.75pt        

\begin{tikzpicture}[x=0.60pt,y=0.60pt,yscale=-1,xscale=1]
%uncomment if require: \path (0,775); %set diagram left start at 0, and has height of 775

%Shape: Polygon [id:ds7941921722110875] 
\draw  [fill={rgb, 255:red, 184; green, 233; blue, 134 }  ,fill opacity=0.6 ] (406.75,186.02) -- (546.33,143.34) -- (439.06,205.48) -- cycle ;
%Shape: Regular Polygon [id:dp6040045821398934] 
\draw  [fill={rgb, 255:red, 155; green, 155; blue, 155 }  ,fill opacity=0.6 ] (446.86,96.87) .. controls (462.47,77.35) and (475.93,89.19) .. (467.22,108.47) .. controls (458.51,127.76) and (439.88,160.47) .. (421.48,187.34) .. controls (403.08,214.21) and (388.82,196.22) .. (400.98,173.89) .. controls (413.13,151.57) and (431.25,116.39) .. (446.86,96.87) -- cycle ;
%Shape: Regular Polygon [id:dp8062185346277478] 
\draw  [fill={rgb, 255:red, 155; green, 155; blue, 155 }  ,fill opacity=0.6 ] (357.69,112.6) .. controls (351.6,90.34) and (369.33,83.19) .. (379.46,99.13) .. controls (389.59,115.07) and (404.42,145.59) .. (414.7,173.44) .. controls (424.99,201.3) and (401.83,206.58) .. (391.37,186.26) .. controls (380.9,165.94) and (363.79,134.85) .. (357.69,112.6) -- cycle ;
%Shape: Polygon Curved [id:ds03140495195428916] 
\draw  [fill={rgb, 255:red, 155; green, 155; blue, 155 }  ,fill opacity=0.6 ] (363.6,115.93) .. controls (346.7,101.28) and (359.3,85.73) .. (376.69,93.06) .. controls (394.08,100.39) and (413.56,109.74) .. (437.14,126.63) .. controls (460.72,143.52) and (442.92,161.77) .. (423.01,151.08) .. controls (403.1,140.39) and (380.5,130.59) .. (363.6,115.93) -- cycle ;
%Shape: Regular Polygon [id:dp5721579599862564] 
\draw  [fill={rgb, 255:red, 155; green, 155; blue, 155 }  ,fill opacity=0.6 ] (369.14,115.66) .. controls (347.47,112.59) and (351.62,93.22) .. (370.4,90.03) .. controls (389.18,86.84) and (422.55,85.01) .. (451.17,86.38) .. controls (479.79,87.74) and (471.05,111.45) .. (448.5,113.22) .. controls (425.95,114.99) and (390.81,118.73) .. (369.14,115.66) -- cycle ;
%Shape: Ellipse [id:dp9016363446449527] 
\draw  [fill={rgb, 255:red, 0; green, 0; blue, 0 }  ,fill opacity=1 ] (387,148.61) .. controls (386.6,148.38) and (386.49,147.83) .. (386.75,147.38) .. controls (387.01,146.93) and (387.55,146.75) .. (387.95,146.98) .. controls (388.35,147.22) and (388.46,147.77) .. (388.2,148.22) .. controls (387.94,148.67) and (387.4,148.85) .. (387,148.61) -- cycle ;
%Shape: Ellipse [id:dp553581618178335] 
\draw  [fill={rgb, 255:red, 0; green, 0; blue, 0 }  ,fill opacity=1 ] (367.5,103.26) .. controls (367.1,103.03) and (366.99,102.47) .. (367.25,102.02) .. controls (367.51,101.57) and (368.04,101.4) .. (368.44,101.63) .. controls (368.84,101.86) and (368.96,102.41) .. (368.7,102.86) .. controls (368.44,103.31) and (367.9,103.49) .. (367.5,103.26) -- cycle ;
%Shape: Ellipse [id:dp14352112919776172] 
\draw  [fill={rgb, 255:red, 0; green, 0; blue, 0 }  ,fill opacity=1 ] (413.71,99.8) .. controls (413.31,99.56) and (413.2,99.01) .. (413.46,98.56) .. controls (413.72,98.11) and (414.25,97.94) .. (414.65,98.17) .. controls (415.05,98.4) and (415.17,98.95) .. (414.91,99.4) .. controls (414.65,99.85) and (414.11,100.03) .. (413.71,99.8) -- cycle ;
%Shape: Ellipse [id:dp14484114103499335] 
\draw  [fill={rgb, 255:red, 0; green, 0; blue, 0 }  ,fill opacity=1 ] (410.62,128.98) .. controls (410.22,128.75) and (410.1,128.2) .. (410.36,127.75) .. controls (410.62,127.3) and (411.16,127.12) .. (411.56,127.36) .. controls (411.96,127.59) and (412.07,128.14) .. (411.81,128.59) .. controls (411.55,129.04) and (411.02,129.22) .. (410.62,128.98) -- cycle ;
%Shape: Ellipse [id:dp6288908241413385] 
\draw  [color={rgb, 255:red, 248; green, 231; blue, 28 }  ,draw opacity=1 ][fill={rgb, 255:red, 248; green, 231; blue, 28 }  ,fill opacity=1 ] (406,187.32) .. controls (405.28,186.9) and (405.04,185.99) .. (405.45,185.27) .. controls (405.86,184.55) and (406.78,184.31) .. (407.5,184.72) .. controls (408.22,185.14) and (408.46,186.05) .. (408.05,186.77) .. controls (407.63,187.49) and (406.72,187.73) .. (406,187.32) -- cycle ;
%Shape: Ellipse [id:dp686737926728553] 
\draw  [color={rgb, 255:red, 144; green, 19; blue, 254 }  ,draw opacity=1 ][fill={rgb, 255:red, 144; green, 19; blue, 254 }  ,fill opacity=1 ] (435.27,143.39) .. controls (434.55,142.97) and (434.31,142.05) .. (434.72,141.34) .. controls (435.14,140.62) and (436.05,140.37) .. (436.77,140.79) .. controls (437.49,141.2) and (437.73,142.12) .. (437.32,142.84) .. controls (436.9,143.56) and (435.99,143.8) .. (435.27,143.39) -- cycle ;
%Shape: Ellipse [id:dp5042626474593314] 
\draw  [color={rgb, 255:red, 208; green, 2; blue, 27 }  ,draw opacity=1 ][fill={rgb, 255:red, 208; green, 2; blue, 27 }  ,fill opacity=1 ] (458.38,100.33) .. controls (457.66,99.92) and (457.41,99) .. (457.83,98.28) .. controls (458.24,97.57) and (459.16,97.32) .. (459.88,97.74) .. controls (460.59,98.15) and (460.84,99.07) .. (460.43,99.78) .. controls (460.01,100.5) and (459.09,100.75) .. (458.38,100.33) -- cycle ;

%Shape: Polygon Curved [id:ds6253285912604949] 
\draw  [dash pattern={on 0.84pt off 2.51pt}] (460.44,77.94) .. controls (480.44,67.94) and (502.94,72.11) .. (524.44,80.61) .. controls (545.94,89.11) and (601.17,192.42) .. (593.17,203.92) .. controls (585.17,215.42) and (584.17,224.42) .. (501.17,222.42) .. controls (418.17,220.42) and (414.67,218.42) .. (396.17,201.42) .. controls (377.67,184.42) and (440.44,87.94) .. (460.44,77.94) -- cycle ;
%Straight Lines [id:da1189061895885336] 
\draw    (608.17,238.42) -- (544.01,182.65) ;
\draw [shift={(542.5,181.33)}, rotate = 41] [color={rgb, 255:red, 0; green, 0; blue, 0 }  ][line width=0.75]    (10.93,-3.29) .. controls (6.95,-1.4) and (3.31,-0.3) .. (0,0) .. controls (3.31,0.3) and (6.95,1.4) .. (10.93,3.29)   ;
%Shape: Regular Polygon [id:dp7603374031292899] 
\draw  [fill={rgb, 255:red, 155; green, 155; blue, 155 }  ,fill opacity=0.6 ] (196.75,237.64) .. controls (231.82,243.11) and (226.84,268.8) .. (196.95,271.88) .. controls (167.07,274.96) and (113.64,275.31) .. (67.55,271.69) .. controls (21.47,268.06) and (33.46,236.86) .. (69.52,235.91) .. controls (105.59,234.96) and (161.69,232.16) .. (196.75,237.64) -- cycle ;
%Shape: Regular Polygon [id:dp6014039858797512] 
\draw  [fill={rgb, 255:red, 155; green, 155; blue, 155 }  ,fill opacity=0.6 ] (114.11,136.25) .. controls (137.13,112.26) and (158.52,129.47) .. (146.12,153.96) .. controls (133.73,178.45) and (106.76,219.54) .. (79.82,252.94) .. controls (52.89,286.33) and (29.95,260.88) .. (47.5,232.77) .. controls (65.04,204.66) and (91.09,160.25) .. (114.11,136.25) -- cycle ;
%Shape: Polygon Curved [id:ds4194032605402256] 
\draw  [fill={rgb, 255:red, 155; green, 155; blue, 155 }  ,fill opacity=0.6 ] (114.21,146.17) .. controls (120.22,114.05) and (148.27,118.64) .. (151.61,146.02) .. controls (154.95,173.39) and (157.29,204.89) .. (153.28,247.1) .. controls (149.27,289.3) and (114.21,280.13) .. (113.21,247.1) .. controls (112.21,214.07) and (108.2,178.28) .. (114.21,146.17) -- cycle ;
%Shape: Regular Polygon [id:dp3285801982403188] 
\draw  [fill={rgb, 255:red, 155; green, 155; blue, 155 }  ,fill opacity=0.6 ] (118.47,152.99) .. controls (106.86,123.31) and (133.62,114.38) .. (150.87,135.83) .. controls (168.12,157.27) and (194.06,198.18) .. (212.71,235.42) .. controls (231.36,272.65) and (196.02,278.93) .. (177.83,251.67) .. controls (159.64,224.41) and (130.09,182.67) .. (118.47,152.99) -- cycle ;
%Shape: Ellipse [id:dp8997586940701908] 
\draw  [fill={rgb, 255:red, 0; green, 0; blue, 0 }  ,fill opacity=1 ] (90.66,199.71) .. controls (90.66,199.03) and (91.26,198.49) .. (92,198.49) .. controls (92.74,198.49) and (93.34,199.03) .. (93.34,199.71) .. controls (93.34,200.38) and (92.74,200.93) .. (92,200.93) .. controls (91.26,200.93) and (90.66,200.38) .. (90.66,199.71) -- cycle ;
%Shape: Ellipse [id:dp28921147805697767] 
\draw  [fill={rgb, 255:red, 0; green, 0; blue, 0 }  ,fill opacity=1 ] (132.56,141.84) .. controls (132.56,141.16) and (133.15,140.62) .. (133.89,140.62) .. controls (134.63,140.62) and (135.23,141.16) .. (135.23,141.84) .. controls (135.23,142.52) and (134.63,143.06) .. (133.89,143.06) .. controls (133.15,143.06) and (132.56,142.52) .. (132.56,141.84) -- cycle ;
%Shape: Ellipse [id:dp6638098162292292] 
\draw  [fill={rgb, 255:red, 0; green, 0; blue, 0 }  ,fill opacity=1 ] (169.62,197.81) .. controls (169.62,197.13) and (170.22,196.59) .. (170.96,196.59) .. controls (171.7,196.59) and (172.3,197.13) .. (172.3,197.81) .. controls (172.3,198.48) and (171.7,199.03) .. (170.96,199.03) .. controls (170.22,199.03) and (169.62,198.48) .. (169.62,197.81) -- cycle ;
%Shape: Ellipse [id:dp8147452395305121] 
\draw  [fill={rgb, 255:red, 0; green, 0; blue, 0 }  ,fill opacity=1 ] (131.55,215.24) .. controls (131.55,214.57) and (132.15,214.02) .. (132.89,214.02) .. controls (133.63,214.02) and (134.23,214.57) .. (134.23,215.24) .. controls (134.23,215.92) and (133.63,216.47) .. (132.89,216.47) .. controls (132.15,216.47) and (131.55,215.92) .. (131.55,215.24) -- cycle ;
%Shape: Ellipse [id:dp6965898963099839] 
\draw  [fill={rgb, 255:red, 0; green, 0; blue, 0 }  ,fill opacity=1 ] (56.59,251.1) .. controls (56.59,250.42) and (57.19,249.88) .. (57.92,249.88) .. controls (58.66,249.88) and (59.26,250.42) .. (59.26,251.1) .. controls (59.26,251.78) and (58.66,252.32) .. (57.92,252.32) .. controls (57.19,252.32) and (56.59,251.78) .. (56.59,251.1) -- cycle ;
%Shape: Ellipse [id:dp6095499705062852] 
\draw  [fill={rgb, 255:red, 0; green, 0; blue, 0 }  ,fill opacity=1 ] (131.55,259.28) .. controls (131.55,258.61) and (132.15,258.06) .. (132.89,258.06) .. controls (133.63,258.06) and (134.23,258.61) .. (134.23,259.28) .. controls (134.23,259.96) and (133.63,260.51) .. (132.89,260.51) .. controls (132.15,260.51) and (131.55,259.96) .. (131.55,259.28) -- cycle ;
%Shape: Ellipse [id:dp9560165391180352] 
\draw  [fill={rgb, 255:red, 0; green, 0; blue, 0 }  ,fill opacity=1 ] (200.68,253.78) .. controls (200.68,253.1) and (201.28,252.55) .. (202.02,252.55) .. controls (202.75,252.55) and (203.35,253.1) .. (203.35,253.78) .. controls (203.35,254.45) and (202.75,255) .. (202.02,255) .. controls (201.28,255) and (200.68,254.45) .. (200.68,253.78) -- cycle ;

%Shape: Regular Polygon [id:dp7507262951813153] 
\draw  [fill={rgb, 255:red, 155; green, 155; blue, 155 }  ,fill opacity=0.6 ] (443.48,217.12) .. controls (418.73,213.72) and (421.99,196.09) .. (443.02,193.69) .. controls (464.04,191.29) and (501.68,190.53) .. (534.18,192.56) .. controls (566.68,194.59) and (558.54,216.06) .. (533.14,217.06) .. controls (507.74,218.06) and (468.24,220.52) .. (443.48,217.12) -- cycle ;
%Shape: Regular Polygon [id:dp267754919857075] 
\draw  [fill={rgb, 255:red, 155; green, 155; blue, 155 }  ,fill opacity=0.6 ] (502.67,285.64) .. controls (486.69,302.27) and (471.45,290.71) .. (479.95,273.84) .. controls (488.45,256.97) and (507.06,228.61) .. (525.71,205.5) .. controls (544.37,182.4) and (560.77,199.58) .. (548.68,218.98) .. controls (536.59,238.37) and (518.66,269) .. (502.67,285.64) -- cycle ;
%Shape: Polygon Curved [id:ds7824100573485219] 
\draw  [fill={rgb, 255:red, 155; green, 155; blue, 155 }  ,fill opacity=0.6 ] (502.51,278.86) .. controls (498.58,300.88) and (478.78,298.02) .. (476.16,279.33) .. controls (473.55,260.64) and (471.6,239.12) .. (474.02,210.22) .. controls (476.44,181.32) and (501.23,187.25) .. (502.25,209.83) .. controls (503.27,232.41) and (506.44,256.84) .. (502.51,278.86) -- cycle ;
%Shape: Regular Polygon [id:dp6841341656199212] 
\draw  [fill={rgb, 255:red, 155; green, 155; blue, 155 }  ,fill opacity=0.6 ] (499.44,274.23) .. controls (507.91,294.42) and (489.14,300.78) .. (476.78,286.29) .. controls (464.42,271.79) and (445.76,244.07) .. (432.26,218.79) .. controls (418.77,193.51) and (443.61,188.87) .. (456.69,207.34) .. controls (469.76,225.8) and (490.97,254.05) .. (499.44,274.23) -- cycle ;
%Shape: Ellipse [id:dp9418829634230012] 
\draw  [fill={rgb, 255:red, 0; green, 0; blue, 0 }  ,fill opacity=1 ] (518.59,242.01) .. controls (518.59,242.47) and (518.18,242.85) .. (517.66,242.86) .. controls (517.14,242.87) and (516.71,242.5) .. (516.7,242.04) .. controls (516.7,241.58) and (517.11,241.19) .. (517.63,241.19) .. controls (518.15,241.18) and (518.58,241.55) .. (518.59,242.01) -- cycle ;
%Shape: Ellipse [id:dp358088941109576] 
\draw  [fill={rgb, 255:red, 0; green, 0; blue, 0 }  ,fill opacity=1 ] (489.63,282) .. controls (489.63,282.46) and (489.22,282.84) .. (488.7,282.85) .. controls (488.18,282.85) and (487.75,282.49) .. (487.74,282.02) .. controls (487.74,281.56) and (488.15,281.18) .. (488.67,281.17) .. controls (489.19,281.17) and (489.62,281.54) .. (489.63,282) -- cycle ;
%Shape: Ellipse [id:dp720690761673329] 
\draw  [fill={rgb, 255:red, 0; green, 0; blue, 0 }  ,fill opacity=1 ] (462.98,244.09) .. controls (462.99,244.55) and (462.57,244.93) .. (462.05,244.94) .. controls (461.53,244.94) and (461.1,244.58) .. (461.1,244.11) .. controls (461.09,243.65) and (461.51,243.27) .. (462.03,243.26) .. controls (462.55,243.26) and (462.97,243.63) .. (462.98,244.09) -- cycle ;
%Shape: Ellipse [id:dp38845455990424327] 
\draw  [fill={rgb, 255:red, 0; green, 0; blue, 0 }  ,fill opacity=1 ] (489.63,231.79) .. controls (489.64,232.25) and (489.22,232.63) .. (488.7,232.64) .. controls (488.18,232.65) and (487.76,232.28) .. (487.75,231.82) .. controls (487.74,231.36) and (488.16,230.97) .. (488.68,230.97) .. controls (489.2,230.96) and (489.62,231.33) .. (489.63,231.79) -- cycle ;
%Shape: Ellipse [id:dp479815058477143] 
\draw  [color={rgb, 255:red, 248; green, 231; blue, 28 }  ,draw opacity=1 ][fill={rgb, 255:red, 248; green, 231; blue, 28 }  ,fill opacity=1 ] (542.09,205.87) .. controls (542.1,206.7) and (541.44,207.38) .. (540.61,207.39) .. controls (539.79,207.4) and (539.11,206.74) .. (539.09,205.91) .. controls (539.08,205.08) and (539.74,204.4) .. (540.57,204.39) .. controls (541.4,204.38) and (542.08,205.04) .. (542.09,205.87) -- cycle ;
%Shape: Ellipse [id:dp08119213325969932] 
\draw  [color={rgb, 255:red, 144; green, 19; blue, 254 }  ,draw opacity=1 ][fill={rgb, 255:red, 144; green, 19; blue, 254 }  ,fill opacity=1 ] (489.37,203.24) .. controls (489.38,204.07) and (488.72,204.75) .. (487.89,204.76) .. controls (487.06,204.77) and (486.38,204.11) .. (486.37,203.28) .. controls (486.36,202.45) and (487.02,201.77) .. (487.85,201.76) .. controls (488.68,201.75) and (489.36,202.41) .. (489.37,203.24) -- cycle ;
%Shape: Ellipse [id:dp011379514337418462] 
\draw  [color={rgb, 255:red, 208; green, 2; blue, 27 }  ,draw opacity=1 ][fill={rgb, 255:red, 208; green, 2; blue, 27 }  ,fill opacity=1 ] (440.56,205.45) .. controls (440.57,206.28) and (439.91,206.96) .. (439.08,206.98) .. controls (438.25,206.99) and (437.57,206.32) .. (437.56,205.5) .. controls (437.54,204.67) and (438.21,203.99) .. (439.04,203.98) .. controls (439.86,203.96) and (440.54,204.63) .. (440.56,205.45) -- cycle ;

%Shape: Regular Polygon [id:dp37663219459452624] 
\draw  [fill={rgb, 255:red, 155; green, 155; blue, 155 }  ,fill opacity=0.6 ] (534.19,98.88) .. controls (518.58,79.36) and (505.12,91.19) .. (513.83,110.48) .. controls (522.54,129.76) and (541.17,162.48) .. (559.57,189.35) .. controls (577.97,216.22) and (592.23,198.22) .. (580.07,175.9) .. controls (567.92,153.57) and (549.8,118.4) .. (534.19,98.88) -- cycle ;
%Shape: Regular Polygon [id:dp8145724455839541] 
\draw  [fill={rgb, 255:red, 155; green, 155; blue, 155 }  ,fill opacity=0.6 ] (623.36,114.6) .. controls (629.45,92.35) and (611.72,85.19) .. (601.59,101.13) .. controls (591.45,117.08) and (576.63,147.59) .. (566.35,175.44) .. controls (556.06,203.3) and (579.22,208.59) .. (589.68,188.27) .. controls (600.14,167.95) and (617.26,136.86) .. (623.36,114.6) -- cycle ;
%Shape: Polygon Curved [id:ds47405266424412695] 
\draw  [fill={rgb, 255:red, 155; green, 155; blue, 155 }  ,fill opacity=0.6 ] (617.45,117.94) .. controls (634.35,103.28) and (621.74,87.74) .. (604.36,95.07) .. controls (586.97,102.4) and (567.49,111.75) .. (543.91,128.63) .. controls (520.33,145.52) and (538.12,163.77) .. (558.04,153.08) .. controls (577.95,142.39) and (600.55,132.59) .. (617.45,117.94) -- cycle ;
%Shape: Regular Polygon [id:dp46808532578652573] 
\draw  [fill={rgb, 255:red, 155; green, 155; blue, 155 }  ,fill opacity=0.6 ] (611.91,117.67) .. controls (633.58,114.6) and (629.43,95.22) .. (610.65,92.03) .. controls (591.87,88.85) and (558.5,87.02) .. (529.88,88.38) .. controls (501.26,89.75) and (510,113.46) .. (532.55,115.23) .. controls (555.1,117) and (590.24,120.74) .. (611.91,117.67) -- cycle ;
%Shape: Ellipse [id:dp7673552380811842] 
\draw  [fill={rgb, 255:red, 0; green, 0; blue, 0 }  ,fill opacity=1 ] (594.05,150.62) .. controls (594.45,150.39) and (594.56,149.84) .. (594.3,149.39) .. controls (594.04,148.94) and (593.5,148.76) .. (593.1,148.99) .. controls (592.7,149.22) and (592.59,149.77) .. (592.85,150.22) .. controls (593.11,150.67) and (593.65,150.85) .. (594.05,150.62) -- cycle ;
%Shape: Ellipse [id:dp22505463054403874] 
\draw  [fill={rgb, 255:red, 0; green, 0; blue, 0 }  ,fill opacity=1 ] (613.55,105.26) .. controls (613.95,105.03) and (614.06,104.48) .. (613.8,104.03) .. controls (613.54,103.58) and (613.01,103.4) .. (612.61,103.63) .. controls (612.21,103.86) and (612.09,104.42) .. (612.35,104.87) .. controls (612.61,105.32) and (613.15,105.49) .. (613.55,105.26) -- cycle ;
%Shape: Ellipse [id:dp3583035570013161] 
\draw  [fill={rgb, 255:red, 0; green, 0; blue, 0 }  ,fill opacity=1 ] (567.34,101.8) .. controls (567.74,101.57) and (567.85,101.02) .. (567.59,100.57) .. controls (567.33,100.12) and (566.8,99.94) .. (566.4,100.17) .. controls (566,100.4) and (565.88,100.95) .. (566.14,101.4) .. controls (566.4,101.85) and (566.94,102.03) .. (567.34,101.8) -- cycle ;
%Shape: Ellipse [id:dp5394991450015008] 
\draw  [fill={rgb, 255:red, 0; green, 0; blue, 0 }  ,fill opacity=1 ] (570.43,130.99) .. controls (570.83,130.76) and (570.95,130.21) .. (570.69,129.76) .. controls (570.43,129.31) and (569.89,129.13) .. (569.49,129.36) .. controls (569.09,129.59) and (568.98,130.14) .. (569.24,130.59) .. controls (569.5,131.04) and (570.03,131.22) .. (570.43,130.99) -- cycle ;
%Shape: Ellipse [id:dp042917742916345314] 
\draw  [color={rgb, 255:red, 248; green, 231; blue, 28 }  ,draw opacity=1 ][fill={rgb, 255:red, 248; green, 231; blue, 28 }  ,fill opacity=1 ] (575.05,189.32) .. controls (575.77,188.91) and (576.01,187.99) .. (575.6,187.27) .. controls (575.18,186.56) and (574.27,186.31) .. (573.55,186.73) .. controls (572.83,187.14) and (572.59,188.06) .. (573,188.78) .. controls (573.42,189.49) and (574.33,189.74) .. (575.05,189.32) -- cycle ;
%Shape: Ellipse [id:dp1848513230031411] 
\draw  [color={rgb, 255:red, 144; green, 19; blue, 254 }  ,draw opacity=1 ][fill={rgb, 255:red, 144; green, 19; blue, 254 }  ,fill opacity=1 ] (545.78,145.39) .. controls (546.5,144.98) and (546.74,144.06) .. (546.33,143.34) .. controls (545.91,142.62) and (545,142.38) .. (544.28,142.79) .. controls (543.56,143.21) and (543.32,144.13) .. (543.73,144.84) .. controls (544.14,145.56) and (545.06,145.81) .. (545.78,145.39) -- cycle ;
%Shape: Ellipse [id:dp6938809681373537] 
\draw  [color={rgb, 255:red, 208; green, 2; blue, 27 }  ,draw opacity=1 ][fill={rgb, 255:red, 208; green, 2; blue, 27 }  ,fill opacity=1 ] (522.67,102.34) .. controls (523.39,101.92) and (523.64,101.01) .. (523.22,100.29) .. controls (522.81,99.57) and (521.89,99.33) .. (521.17,99.74) .. controls (520.45,100.15) and (520.21,101.07) .. (520.62,101.79) .. controls (521.04,102.51) and (521.96,102.75) .. (522.67,102.34) -- cycle ;

% Text Node
\draw (614.5,239.5) node [anchor=north west][inner sep=0.75pt]   [align=left] {$K^{(3)}_{3,3,3}$};

\end{tikzpicture}
    \caption{The fan $\mathcal{F}_3$ and the semi-blowup fan $\mathcal{F}_{3,3}$.
In the drawing of $\mathcal{F}_{3,3}$, the dashed region denotes the complete tripartite $3$-graph $K_{3,3,3}^{(3)}$ on the transversal vertices, whose three partite classes are indicated by the three colors.}
    \label{fig:fan}
\end{figure}

We first recall the definition of the fan that appears in the Mubayi--Pikhurko generalization of
Mantel's theorem to hypergraphs~\cite{MubayiPikhurko2007}.

\begin{defn}\label{def:fan}
For $r\ge 2$, the \emph{fan} $\mathcal{F}_r$ is the $r$-graph with vertex set
\[
        \{c,t_1,\dots,t_r\}\cup A_1\cup\cdots\cup A_r,
\]
where $A_1, \ldots, A_r$ are pairwise disjoint sets of vertices, each of size $r-2$. Its edge set consists of
\[
        T:=\{t_1,\dots,t_r\}
        \quad\text{and}\quad
        P_i:=\{c,t_i\}\cup A_i \quad\text{for } i\in [r].
\]
We call $T$ the \emph{transversal edge} and $P_1,\dots,P_r$ the
\emph{petals}.
\end{defn}

% Note that for $r=2$, this fan is a triangle.

Our forbidden configuration is obtained by taking several disjoint fans and
then adding the natural transversal edges between their transversal vertices (see Figure~\ref{fig:fan}).

\begin{defn}[Semi-blowup fan]\label{def:Frt}
Fix $r\ge 2$ and $t\ge r$. Let
$\mathcal{F}_r^1,\dots,\mathcal{F}_r^t$ be $t$ vertex-disjoint labelled copies of $\mathcal{F}_r$. In the
$i$th copy, write $c^i$, $t_1^i,\ldots,t_r^i$, and $A_1^i, \ldots,A_r^i$ for the \emph{center}, \emph{transversal vertices}, and \emph{petal vertex sets}, respectively.
The \emph{semi-blowup fan} $\mathcal{F}_{r,t}$ is the $r$-graph on the union of these copies whose edges are
\begin{enumerate}[label=(\roman*)]
\item all petals and all transversal edges of the individual copies $\mathcal{F}_r^i$, $i\in[t]$;
\item all \emph{blowup transversal edges} $\{t_1^{i_1},t_2^{i_2},\dots,t_r^{i_r}\}$ for $(i_1,\dots,i_r)\in [t]^r$ with $|\{i_1,\ldots,i_r\}|\ge 2$.
\end{enumerate}
\end{defn}

Equivalently, $\mathcal{F}_{r,t}$ is obtained from the full blowup $\mathcal{F}_r[t]$ by keeping the complete blowup of the transversal edge and, for each copy, keeping only its original petals.  The $r$-graph $\mathcal F_{r,t}$ is not $r$-partite: already $\mathcal F_r$ is not $r$-partite, since the transversal edge would force $t_1,\ldots,t_r$ into distinct parts, leaving no possible part for the center $c$ in all petals.

\begin{remark}
For each $j\in[r]$, let $T_j:=\{t_j^i:i\in[t]\}$. 
Note that the induced subgraph of $\mathcal{F}_{r,t}$ on the set $\bigcup_{j \in [r]} T_j$ of all transversal vertices is the complete $r$-partite $r$-graph with parts $T_1,\ldots,T_r$.
Indeed, every edge in Definition~\ref{def:Frt}(ii) contains exactly one vertex from each of the parts $T_1,\ldots,T_r$. Moreover, these edges form the complete $r$-partite $r$-graph on these parts with the $t$ diagonal edges $\{t_1^i,\ldots,t_r^i\}$, $i\in[t]$, removed. These diagonal edges are precisely the individual transversal edges already included in Definition~\ref{def:Frt}(i). 
\end{remark}

\section{Constructions with fewer copies of $\mathcal{F}_{r,t}$}\label{sec:ss}

This section proves Theorem~\ref{thm:main}, assuming the exact result
Theorem~\ref{thm:exact}, whose proof is postponed to Section~\ref{sec:exact}.

% Together with Lemma~\ref{thm:weak-stability}, Theorem~\ref{thm:exact} also gives stability. Indeed, since $|\mathcal{J}_{t-1}(n)|=t_r^{(r)}(n)+O_{r,t}(n^{r-1})$, every $\mathcal{F}_{r,t}$-free $r$-graph with $\ex(n,\mathcal{F}_{r,t})-o(n^r)$ edges is $o(n^r)$-close to $\mathcal{J}_{t-1}(n)$. Therefore, $\mathcal{F}_{r,t}$ is stable by definition.

Fix integers $r\ge 2$ and $t\ge r$, and set
\[
        \mathcal F:=\mathcal F_{r,t}
        \quad\text{and}\quad 
        \mathcal J:=\mathcal J_{t-1}(n).
\]

Let $S$ and $V_1\cup\cdots\cup V_r$ be the exceptional set and the balanced
partition of the partite vertices of $\mathcal J$, as defined in
Section~\ref{sec:prelim}. Thus $\mathcal J-S=\mathcal T_r^{(r)}(n-t+1)$. We write
\[
        \overline{\mathcal J}_{\mathrm{par}}
        :=
        \binom{V(\mathcal J)\setminus S}{r}\setminus \mathcal J
\]
for the family of partite non-edges of $\mathcal J$; equivalently, these are
the non-crossing $r$-sets in $V(\mathcal J)\setminus S$. Put
$$
        m_{\mathcal F}:=v(\mathcal F)-r-(t-1).
$$
For $Z\subseteq V(\mathcal J)\setminus S$, define
$$
        \mathcal S(Z):=\left\{\{z\}\cup Y:
        z\in Z \text{ and } Y\in \tbinom{S}{r-1}\right\}.
$$
We call $\mathcal S(Z)$ the \emph{$S$-star of $Z$}. Equivalently,
$\mathcal S(Z)$ consists of the edges of $\mathcal J$
which contain exactly one vertex of $Z$ and otherwise lie in $S$. Put
$$
        m_S:=\binom{|S|}{r-1}=\binom{t-1}{r-1},
$$
so that $|\mathcal S(Z)|=m_S|Z|$.
% All implicit constants in this section may depend on the fixed
% $r$-graph $\mathcal F$.

The construction starts from the extremal $r$-graph
$\mathcal J=\mathcal J_{t-1}(n)$.  For a suitable small set $Z$ of partite
vertices, we delete the $S$-star $\mathcal S(Z)$ and replace the deleted edges
by partite non-edges contained in $Z$.  The purpose of the deletion is local:
it destroys almost all copies created by any one added non-edge, while the
copies using at least two added non-edges are of lower order in the range
$q\le \delta n$.

The required estimates are organized as follows. Lemmas~\ref{lem:one-edge-active}
and~\ref{lem:c-scale} describe the copies created by one added member of
$\overline{\mathcal J}_{\mathrm{par}}$ and give the scale
$c(n,\mathcal F)=\Theta_{\mathcal F}(n^{m_{\mathcal F}})$. Lemmas~\ref{lem:single-fan}
and~\ref{lem:star-deletion} prove that deleting $\mathcal S(Z)$ leaves only a
$\rho_t$-proportion of the relevant one-new-edge copies, with
$\rho_t\to0$ as $t\to\infty$. Lemma~\ref{lem:two-new} bounds copies using
at least two added edges. Finally, Lemma~\ref{lem:selection} supplies enough
members of $\overline{\mathcal J}_{\mathrm{par}}$ of the required part-type,
and these estimates
are combined in the proof of Theorem~\ref{thm:main}.

\subsection{Copies with added edges}

We first understand the local effect of adding one member of
$\overline{\mathcal J}_{\mathrm{par}}$ to
$\mathcal J$. The main point is that every copy created by this edge has a
distinguished active fan, while the other $t-1$ fans are forced to use the
exceptional set $S$.

\begin{lemma}\label{lem:one-edge-active}
Let $e\in \overline{\mathcal J}_{\mathrm{par}}$, and let $\varphi$ be an
embedding of $\mathcal F$ into $\mathcal J\cup\{e\}$ such that
$e\in\varphi(\mathcal F)$. Then there is a unique $i\in[t]$ and a unique
edge $f\in\mathcal F_r^i$ such that $\varphi(f)=e$. Moreover,
\[
        |\varphi(V(\mathcal F_r^j))\cap S|=1
        \quad\text{for every } j\in[t]\setminus\{i\},
\]
and consequently,
\[
        S=\bigcup_{j\in[t]\setminus\{i\}}
        \bigl(\varphi(V(\mathcal F_r^j))\cap S\bigr).
\]
\end{lemma}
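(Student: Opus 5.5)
Since $\varphi$ is injective on edges and $e$ lies in its image, there is a unique edge $f$ of $\mathcal F$ with $\varphi(f)=e$, and the whole proof is about locating $f$ and then counting how the fans meet $S$.

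\emph{Step 1: every fan meets $S$.} Each $\mathcal F_r^j$ is non-$r$-partite, as noted after Definition~\ref{def:Frt}. Suppose $\varphi(\mathcal F_r^j)$ avoids $e$ and $\varphi(V(\mathcal F_r^j))\cap S=\emptyset$. Then $\varphi(\mathcal F_r^j)$ lies in $\mathcal J-S=\mathcal T_r^{(r)}(n-t+1)$, which is $r$-partite, a contradiction. So every fan $j$ with $e\notin\varphi(\mathcal F_r^j)$ has $\varphi(V(\mathcal F_r^j))\cap S\neq\emptyset$.

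\emph{Step 2: $f$ is a petal or diagonal transversal edge of some $\mathcal F_r^i$.} Suppose instead that $f$ is a blowup transversal edge. Then no fan contains an edge mapped to $e$, so by Step~1 all $t$ fans meet $S$. The fans are vertex-disjoint and $\varphi$ is injective, so $|S|\ge t$. But $|S|=t-1$, a contradiction. Hence $f\in\mathcal F_r^i$ for some $i$, and $i$ is unique because the fans are edge-disjoint.

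\emph{Step 3: each other fan meets $S$ in exactly one vertex.} Apply Step~1 to every $j\ne i$. The $t-1$ disjoint sets $\varphi(V(\mathcal F_r^j))\cap S$, $j\in[t]\setminus\{i\}$, are all nonempty and lie in the $(t-1)$-set $S$. Pigeonhole then forces each to have size exactly $1$ and their union to be all of $S$. In particular the active fan $i$ uses no vertex of $S$.

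The only subtle point is Step~2, ruling out that $e$ is the image of a blowup transversal edge. This is handled by the same counting as Step~3, run with all $t$ fans inactive. Nothing from the exact Tur\'an theorem is needed.
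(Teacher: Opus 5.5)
Your proof is correct and follows the paper's argument essentially step for step. You rule out that $e$ is the image of a blowup transversal edge because all $t$ vertex-disjoint fans would then have to meet $S$ ($\mathcal J-S$ is $r$-partite and $\mathcal F_r$ is not), and you then apply the same pigeonhole count to the $t-1$ inactive fans; your remark that the active fan avoids $S$ is a correct by-product that the paper also uses later.
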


\begin{proof}
Suppose first that no edge of an individual fan is mapped to $e$. Then $e$ is
the image of a blowup transversal edge involving vertices from at least two
different fans. Hence all petals and the transversal edge of each of the $t$
fans are mapped into $\mathcal J$. Since $\mathcal J-S$ is $r$-partite and
$\mathcal F_r$ is not $r$-partite, each of these $t$ vertex-disjoint fans must
meet $S$, which is impossible because $|S|=t-1$.
Thus some edge of an individual fan is mapped to $e$. This fan is unique; write
it as $\mathcal F_r^i$. The edge $f\in\mathcal F_r^i$ with $\varphi(f)=e$ is
also unique, since $\varphi$ is injective on vertices.

For every $j\ne i$, all edges of $\mathcal F_r^j$ are mapped into
$\mathcal J$, so $\varphi(V(\mathcal F_r^j))$ must meet $S$. The sets
$\varphi(V(\mathcal F_r^j))\cap S$, $j\ne i$, are pairwise disjoint and
nonempty, and there are $t-1$ of them inside the set $S$ of size $t-1$.
Therefore each has size one and their union is $S$.
\end{proof}

For the rest of this section, in any embedding counted by
Lemma~\ref{lem:one-edge-active}, the unique fan whose edge is mapped to the
added edge $e$ is called the \emph{active fan}, and the other fans are called
\emph{inactive fans}.

The next lemma turns this structural description into the scale of the
one-edge supersaturation function.

\begin{lemma}\label{lem:c-scale}
For every $e\in \overline{\mathcal J}_{\mathrm{par}}$, $N_{\mathcal{F}}(\mathcal{J}\cup \{e\};e)=\Theta_{\mathcal{F}}(n^{m_{\mathcal F}})$.
Consequently, $c(n,\mathcal{F})=\Theta_{\mathcal{F}}(n^{m_{\mathcal F}})$.
\end{lemma}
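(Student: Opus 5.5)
We prove matching upper and lower bounds of order $n^{m_{\mathcal F}}$ for a fixed $e\in\overline{\mathcal J}_{\mathrm{par}}$. Since $c(n,\mathcal F)$ is a minimum over non-edges of $\mathcal J$, the second claim follows once we know two things: every non-edge of $\mathcal J=\mathcal J_{t-1}(n)$ lies in $\overline{\mathcal J}_{\mathrm{par}}$, which holds because every $r$-set meeting $S$ is an edge; and the constants depend only on $\mathcal F$. Here $m_{\mathcal F}=v(\mathcal F)-r-(t-1)$ counts the vertices of $\mathcal F$ not forced into $e$ or $S$.

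\emph{Upper bound.} We use Lemma~\ref{lem:one-edge-active}. Every embedding $\varphi$ with $e\in\varphi(\mathcal F)$ is determined by the following data:
\begin{itemize}
\item the index $i$ of the active fan and the edge $f\in\mathcal F_r^i$ with $\varphi(f)=e$ (at most $t(r+1)$ choices);
\item the bijection between $f$ and $e$ (at most $r!$ choices);
\item for each inactive fan $j$, the unique vertex of $\mathcal F_r^j$ mapped into $S$, together with the bijection these $t-1$ vertices induce onto $S$ (at most $v(\mathcal F)^{t-1}(t-1)!$ choices);
\item the images of the remaining $v(\mathcal F)-r-(t-1)=m_{\mathcal F}$ vertices, which must lie in $V(\mathcal J)\setminus S$ (at most $n^{m_{\mathcal F}}$ choices).
\end{itemize}
Altogether this gives $O_{\mathcal F}(n^{m_{\mathcal F}})$ embeddings.

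\emph{Lower bound.} We exhibit one explicit template and show it extends in $\Omega(n^{m_{\mathcal F}})$ ways. Since $e$ is non-crossing, it misses some part, say $V_1$, while some part $V_2$ contains at least two vertices of $e$.

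\textbf{Active fan $i=1$.} Map the transversal edge $T^1=\{t^1_1,\dots,t^1_r\}$ onto $e$, in an order chosen so that $t^1_1\in V_2$ and, when $r\ge3$, as suits the petals. Send the center $c^1$ into $V_1$. Each petal $P^1_k=\{c^1,t^1_k\}\cup A^1_k$ then has $c^1\in V_1$ and $t^1_k\in V_{p(k)}$ for some $p(k)\neq1$, and we place the $r-2$ vertices of $A^1_k$ one in each of the other parts, making the petal crossing.

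\textbf{Inactive fans $j=2,\dots,t$.} Map $c^j$ to the $(j-1)$-th vertex of $S$. Every petal of fan $j$ then meets $S$ and is automatically an edge.

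\textbf{Remaining transversal vertices.} For $j\ge2$, put $t^j_k\in V_k$, so that each $T^j$ is crossing. The blowup transversal edges $\{t^{i_1}_1,\dots,t^{i_r}_r\}$ force a consistency condition: for every coordinate $k$, the vertices $t^1_k$ and $t^j_k$ must lie in the same part. Thus $t^1_k\in V_{\pi(k)}$ for a single bijection $\pi$. This fails here, because $e$ is non-crossing and so the $t^1_k$ do not occupy distinct parts.

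So the template above does not work, and we must let an inactive transversal vertex sit in $S$ instead. Revised choice: for each inactive fan $j$, map one transversal vertex, say $t^j_1$, to the $(j-1)$-th vertex of $S$, and keep $c^j$ partite. Then:
\begin{itemize}
\item the petal $P^j_1$ and the edge $T^j$ contain $t^j_1$, so they meet $S$ and are edges;
\item every blowup edge whose first coordinate comes from a fan $j\ge2$ meets $S$ and is an edge;
\item the only blowup edges not meeting $S$ have $t^1_1$ as first coordinate, i.e.\ they are $\{t^1_1,t^{i_2}_2,\dots,t^{i_r}_r\}$ with some $i_k\neq1$.
\end{itemize}

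\textbf{Placing the active fan.} Take $t^1_1\in V_1$, and for $k\ge2$ put $t^j_k\in V_k$ for all $j\in[t]$ (including $j=1$). Then $T^1$ is crossing, and every blowup edge of the last type is crossing. To use $e$, map a petal of fan $1$ onto $e$ rather than $T^1$.

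\textbf{Main obstacle.} The delicate point is choosing this labelled placement for an arbitrary non-crossing $e$. The petal $P^1_k$ must contain $c^1$ and $t^1_k$ together with $r-2$ free vertices whose parts are forced by $e$. So we need $e$ to contain a vertex of $V_k$ (to serve as $t^1_k$), while the remaining petals of fan $1$ are made crossing using $c^1$ in the part that $t^1_k$ misses. Concretely:
\begin{itemize}
\item pick a vertex $x\in e$ with $x\in V_k$, and set $t^1_k=x$;
\item set $c^1$ equal to another vertex of $e$;
\item let $A^1_k$ be the rest of $e$;
\item place $t^1_1\in V_1$ and choose the parts so that for $k'\ne k$, the set $\{c^1,t^1_{k'}\}$ lies in distinct parts and can be completed to a crossing petal. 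This is possible whenever $c^1\notin V_{k'}$, which we can arrange by relabelling the parts so that $c^1$ lies in a part missed by $e$'s role, using that $e$ has at least two vertices in one part.
\end{itemize}
If this consistency check can be done, every other vertex ranges freely over a linear-size part, and the disjointness constraints remove only $O(n^{m_{\mathcal F}-1})$ choices. This yields $\Omega(n^{m_{\mathcal F}})$ embeddings, matching the upper bound.
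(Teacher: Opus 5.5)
Your upper bound, and your remark that every non-edge of $\mathcal J$ is a partite non-edge (so the claim about $c(n,\mathcal F)$ follows), agree with the paper. The lower bound, however, is not finished: it ends with ``if this consistency check can be done'', and the step meant to settle it is wrong as written. Taking $c^1$ to be an arbitrary ``other vertex of $e$'' fails. Suppose $t^1_k=x\in V_k$ but $c^1\in e\cap V_{k'}$ with $k'\ne k$. Then the petal $P^1_{k'}$ contains $c^1$ and $t^1_{k'}$, both in $V_{k'}$, so it is a non-crossing $r$-set avoiding $S$ and different from $e$, hence not an edge of $\mathcal J\cup\{e\}$. Your proposed remedy, relabelling so that $c^1$ ``lies in a part missed by $e$'s role'', does not help: $c^1$ is a vertex of $e$, so its part is fixed, and what you are actually free to choose is the coordinate $k$. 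The condition you wrote down, $c^1\notin V_{k'}$ for all $k'\ne k$, just says $c^1\in V_k$, the same part as $t^1_k$. So the missing choice is as follows. Let $k$ be a part with $|e\cap V_k|\ge 2$, which exists because $e$ is non-crossing. Map $t^1_k$ and $c^1$ to two distinct vertices of $e\cap V_k$, and map $A^1_k$ onto the rest of $e$. For $k'\ne k$, complete $P^1_{k'}$ by placing one vertex of $A^1_{k'}$ in each part other than $V_k$ and $V_{k'}$.

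You also never say where the inactive centers go. With $t^j_1\in S$ and $t^j_{k'}\in V_{k'}$ for $k'\ge 2$, the petals $P^j_{k'}$ with $k'\ge2$ avoid $S$. They are therefore edges only if crossing, which forces $c^j\in V_1$, with $A^j_{k'}$ filling the remaining $r-2$ parts. With these choices, every fan edge meets $S$, is crossing, or equals $e$. Every blowup transversal edge either contains some $t^j_1\in S$ or has its $\ell$th vertex in $V_\ell$ for every $\ell$. Each of the $m_{\mathcal F}$ remaining vertices ranges over a prescribed part of size $n/r+O_{\mathcal F}(1)$, minus $O_{\mathcal F}(1)$ used vertices, giving $\Omega_{\mathcal F}(n^{m_{\mathcal F}})$ copies. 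This is exactly the paper's construction. The paper lets each inactive fan put its $S$-vertex in an arbitrary coordinate $\ell$ with center in $V_\ell$; your common coordinate $1$ is a special case. Your detour through mapping $T^1$ onto $e$ is unnecessary.
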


\begin{proof}
By Lemma~\ref{lem:one-edge-active}, any
copy of $\mathcal{F}$ in $\mathcal{J}\cup \{e\}$ containing $e$ contains the fixed set $e\cup S$, of size
$r+t-1$. After choosing the edge of $\mathcal{F}$ mapped to $e$ and the vertices of $\mathcal{F}$
mapped to $S$, there are only $O_{\mathcal{F}}(1)$ possible finite types. The remaining vertices are chosen from $V(\mathcal{J})\setminus S$, giving at most $O_{\mathcal{F}}\bigl(n^{v(\mathcal{F})-r-(t-1)}\bigr)=O_{\mathcal{F}}(n^{m_{\mathcal F}})$ copies.
This gives the upper bound.

For the lower bound, fix $e\in \overline{\mathcal J}_{\mathrm{par}}$.
Since $e$ is non-crossing, choose $i\in[r]$ and distinct vertices
$u,v\in e\cap V_i$.  In the first fan, map the petal $P_i$ to $e$, with the
center mapped to $u$ and $t_i^1$ mapped to $v$.  Choose the remaining
transversal vertices of this fan by putting $t_j^1\in V_j$ for $j\ne i$, and
choose the remaining vertices of the other petals so that those petals and the
transversal edge of the first fan are crossing.

For each of the other $t-1$ fans, assign a distinct vertex of $S$ to one
transversal coordinate.  If $s\in S$ is assigned to coordinate $\ell$, map
$t_\ell$ to $s$, put the center in $V_\ell$, put $t_j$ in $V_j$ for
$j\ne \ell$, and choose all remaining petal vertices from fixed prescribed
parts, avoiding the vertices already used.  Then every petal and every
transversal edge of this inactive fan is present: the edges containing $s$
meet $S$, and the others are crossing.  Moreover, every blowup transversal
edge is either crossing or meets $S$.  Thus these choices give labelled copies
of $\mathcal F$ containing $e$.  The only vertices fixed in advance are those
of $e\cup S$, and all remaining choices are made from linear-sized parts while
avoiding only $O_{\mathcal F}(1)$ vertices.  Hence there are
$\Omega_{\mathcal F}(n^{v(\mathcal F)-r-(t-1)})
=\Omega_{\mathcal F}(n^{m_{\mathcal F}})$ such copies.
\end{proof}

After the one-new-edge copies have been controlled, we also need to bound
copies using two or more added edges. This contribution is smaller by one
power of $n$, and hence can be absorbed once the range $q\le \delta n$ is made
sufficiently small.

\begin{lemma}\label{lem:two-new}
Let $E^+\subseteq \overline{\mathcal J}_{\mathrm{par}}$ with
$|E^+|=m_0=O_{\mathcal{F}}(n)$, and let
$\mathcal{H}\subseteq \mathcal{J}\cup E^+$. Then the number of copies of $\mathcal{F}$ in $\mathcal{H}$ that use at least two edges of $E^+$ is at most $a_{\mathcal{F}}m_0^2n^{m_{\mathcal F}-1}$.
\end{lemma}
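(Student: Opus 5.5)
We bound the count by a double counting over the ordered pair of added edges used, combined with a structural analysis of the forced vertices, in the same spirit as the proof of Lemma~\ref{lem:c-scale}. Fix a copy $\varphi$ of $\mathcal F$ in $\mathcal H$ using at least two edges of $E^+$. Choose two distinct edges $f_1,f_2\in\mathcal F$ with $\varphi(f_1),\varphi(f_2)\in E^+$. There are at most $e(\mathcal F)^2 m_0^2$ choices for the pair $(f_1,f_2)$ together with their images $(e_1,e_2)$. It therefore suffices to show the following: for fixed $f_1,f_2$ and fixed images $e_1\ne e_2$, the number of embeddings $\varphi$ of $\mathcal F$ into $\mathcal J\cup E^+$ (a superset of $\mathcal H$) with $\varphi(f_1)=e_1$ and $\varphi(f_2)=e_2$ is $O_{\mathcal F}(n^{m_{\mathcal F}-1})$. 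Since $\mathcal H\subseteq\mathcal J\cup E^+$, every copy in $\mathcal H$ is also a copy there, so overcounting is harmless.

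The key step is to show that every such $\varphi$ has at least $r+t$ vertices fixed by $e_1\cup e_2\cup S$ plus $O(1)$ type choices. This gives at most $O_{\mathcal F}(n^{v(\mathcal F)-r-t})=O_{\mathcal F}(n^{m_{\mathcal F}-1})$ extensions.

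First, $|e_1\cup e_2|\ge r+1$ because $e_1\ne e_2$, and both sets lie outside $S$. Next, I would argue that $\varphi$ covers all of $S$, adapting the argument of Lemma~\ref{lem:one-edge-active}. Any fan $\mathcal F_r^j$ none of whose edges is mapped into $E^+$ is mapped into $\mathcal J$. Since $\mathcal J-S$ is $r$-partite and $\mathcal F_r$ is not, such a fan must meet $S$. If at most one fan is ``active'' (contains an edge mapped into $E^+$), then at least $t-1$ fans meet $S$ in disjoint nonempty sets, so they cover all of $S$. The total fixed set $e_1\cup e_2\cup S$ then has size at least $r+1+t-1=r+t$, as needed. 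This includes the case where $f_1$ or $f_2$ is a blowup transversal edge, since such edges do not belong to any single fan; if no fan is active at all, we get $t$ disjoint fans meeting $S$, which is impossible.

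The main obstacle is the case where two different fans are active, say $f_1\in\mathcal F_r^{i_1}$ and $f_2\in\mathcal F_r^{i_2}$ with $i_1\ne i_2$. Then only $t-2$ fans are forced to meet $S$, and one vertex of $S$ may be uncovered. Here I would use that $e_1$ and $e_2$ are images of edges in vertex-disjoint fans, so $e_1\cap e_2=\emptyset$ and $|e_1\cup e_2|=2r$. The fixed set then has size at least $2r+(t-2)=r+t+(r-2)\ge r+t$, since $r\ge2$.

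More generally, if $k\ge1$ fans are active, pick one added edge in each active fan. These edges are pairwise disjoint, which gives $kr+(t-k)\ge r+t-1$ fixed vertices. The count $r+t$ is reached as soon as either $k\ge2$, or $k=1$ and a second added edge exists (either as a blowup transversal edge or as a second edge inside the same fan). So the worst case is covered in every configuration.

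Finally, after fixing which vertices of $\mathcal F$ map to the chosen fixed set (at most $O_{\mathcal F}(1)$ ways), the remaining at most $v(\mathcal F)-r-t$ vertices are chosen freely among $n$ vertices. Summing over the $O(m_0^2)$ choices of $(e_1,e_2)$ and the $O(1)$ choices of $(f_1,f_2)$ and types yields the bound $a_{\mathcal F}m_0^2n^{m_{\mathcal F}-1}$. The hypothesis $m_0=O_{\mathcal F}(n)$ is not needed for this bound itself; it only matters later, for comparison with the one-edge term.
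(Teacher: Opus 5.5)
Your argument for copies with at most one active fan is correct and is the paper's $k=1$ case. The reduction to ``fixed $e_1,e_2$ with at least $r+t$ forced vertices'' fails once more fans are active. Suppose a copy has $k\ge 3$ active fans. Then only $t-k$ inactive fans are forced to meet $S$, and the added edges of the $k-2$ active fans not containing $f_1,f_2$ are \emph{not} determined by $(e_1,e_2)$. So in general $e_1$, $e_2$ and $S$ force only $2r+(t-k)$ vertices, which is fewer than $r+t$ as soon as $k>r$. This is possible, since $t$ may be much larger than $r$. Your ``more generally'' paragraph reaches $kr+(t-k)$ fixed vertices by picking one added edge in each active fan. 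That fixes $k$ edges of $E^+$, but your outer sum pays $m_0^2$ for only two of them; the extra factor $m_0^{k-2}$ for the others is never accounted for. The same bookkeeping problem arises for $k=2$ when the chosen pair $f_1,f_2$ does not lie in two distinct active fans, e.g.\ both in one fan, or one a blowup transversal edge. Then only $r+1+(t-2)=r+t-1$ vertices are forced. Since you sum over \emph{all} pairs $(f_1,f_2)$, that term must also be bounded, and your Case 2 (which assumes $i_1\ne i_2$) does not cover it.

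Your closing remark that $m_0=O_{\mathcal F}(n)$ is not needed is false, and this is exactly where it is needed. Take $E^+=\overline{\mathcal J}_{\mathrm{par}}$, so $\mathcal H=\mathcal J\cup E^+=K_n^{(r)}$ and $m_0=\Theta(n^r)$. Embeddings placing all of $V(\mathcal F)$ inside $V_1$ give $\Theta(n^{v(\mathcal F)})$ copies, all of whose edges lie in $E^+$. By contrast, $m_0^2n^{m_{\mathcal F}-1}=\Theta(n^{v(\mathcal F)+r-t})$, which is smaller when $t>r$. In the same example, take fixed disjoint $e_1,e_2\subseteq V_1$ as images of petals of two different fans. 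There are then $\Theta(n^{v(\mathcal F)-2r})$ such embeddings, which exceeds $n^{m_{\mathcal F}-1}$ when $t>r$, so your key step fails directly. The repair is the paper's: split by the number $k$ of active fans, and for $k\ge 2$ sum over the $m_0^k$ choices of one added edge per active fan. These fix $kr$ partite vertices plus $t-k$ vertices of $S$, giving $O(m_0^k n^{v(\mathcal F)-t-k(r-1)})$. Only then does $m_0=O(n)$ convert this into $O(m_0^2 n^{v(\mathcal F)-t-k(r-2)-2})\le O(m_0^2 n^{m_{\mathcal F}-1})$, using $k(r-2)+2\ge r$.
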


\begin{proof}
Let $\mathcal{Q}$ be such a $\mathcal{F}$ copy. Since
$E^+\subseteq \overline{\mathcal J}_{\mathrm{par}}$, at least one individual fan has
a petal or its own transversal edge in $E^+$. Indeed, if this were not the
case, then all petals and all transversal edges of the $t$ individual fans
would lie in $\mathcal{J}$. Since $\mathcal{J}-S$ is $r$-partite whereas
$\mathcal F_r$ is not $r$-partite, each individual fan would have to meet
$S$. The $t$ individual fans are vertex-disjoint, so this would require $t$
distinct vertices of $S$, impossible because $|S|=t-1$.

Let $k$ be the number of individual fans having a petal or its own
transversal edge in $E^+$. If $k=1$, then the other $t-1$ fans use all
vertices of $S$. Choose one such new edge in the unique active fan and one
further new edge of $\mathcal{Q}$. These two members of
$\overline{\mathcal J}_{\mathrm{par}}$ are distinct,
so their union has size at least $r+1$ and is disjoint from $S$. Thus at least
$r+t$ vertices of $\mathcal{Q}$ are fixed, and at most
\[
        v(\mathcal{F})-(r+t)=m_{\mathcal F}-1
\]
vertices remain free. There are $O_{\mathcal{F}}(m_0^2)$ choices for the two distinguished
new edges and $O_{\mathcal{F}}(1)$ choices for their roles in $\mathcal{F}$, giving
$O_{\mathcal{F}}(m_0^2n^{m_{\mathcal F}-1})$ copies.
If $k\ge 2$, choose one such new edge from each of these $k$
fans and ignore any additional new edges of $\mathcal{Q}$.
Then the chosen edges lie in disjoint fans and fix $kr$ partite vertices.
The remaining $t-k$ fans have all their petals and their transversal edges in $\mathcal{J}$, and hence each contains a distinct vertex of $S$.
Thus at least $kr+(t-k)=t+k(r-1)$ vertices are fixed.
For this value of $k$, the number of copies is at most
        $O_{\mathcal F}\bigl(m_0^k n^{v(\mathcal F)-t-k(r-1)}\bigr).$
Since $m_0=O_{\mathcal F}(n)$ and $k\ge2$,
\[
        m_0^k n^{v(\mathcal F)-t-k(r-1)}
        \le
        O_{\mathcal F}\bigl(m_0^2
        n^{v(\mathcal F)-t-k(r-2)-2}\bigr)
        \le
        O_{\mathcal F}\bigl(m_0^2 n^{v(\mathcal F)-r-t}\bigr),
\]
where the last inequality uses $k(r-2)+2\ge r$.  This is
$O_{\mathcal F}(m_0^2n^{m_{\mathcal F}-1})$.  Summing over $k$ proves the lemma.
\end{proof}

\subsection{Deleting the \texorpdfstring{$S$}{S}-star}

We next show why deleting the $S$-star of a small set $Z$ is effective.
Among the one-new-edge copies counted above, only an exponentially small
proportion can avoid the deleted $S$-star when $t$ is large.

Recall that in $\mathcal F_r$ the vertices $t_1,\ldots,t_r$ are the
transversal vertices. In the next lemma, a permutation $\sigma$ prescribes
the part of each transversal coordinate: the $j$th transversal
vertex is required to lie in $V_{\sigma(j)}$, unless it is the unique
exceptional vertex $s$.

\begin{lemma}\label{lem:single-fan}
Fix $s\in S$, a bounded set $R$ of partite vertices, and a
permutation $\sigma$ of $[r]$. Let $\mathscr A$ be the set of labelled edge-preserving embeddings $\varphi:\mathcal F_r\to \mathcal J-R$
such that $\varphi(V(\mathcal F_r))\cap S=\{s\}$
and, for every $j\in[r]$,
\[
\varphi(t_j)=s \quad\text{or}\quad \varphi(t_j)\in V_{\sigma(j)}.
\]
Then there is a constant $\vartheta_r>0$, depending only on $r$, such
that for every $j\in[r]$,
\[
|\{\varphi\in\mathscr A:\varphi(t_j)=s\}|
   \ge \vartheta_r|\mathscr A|.
\]
\end{lemma}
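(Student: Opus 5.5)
The plan is to partition $\mathscr A$ according to which vertex of $\mathcal F_r$ is mapped to $s$. Then I compare the class $\{\varphi:\varphi(t_j)=s\}$ with the whole of $\mathscr A$ by direct counting. Write $v:=v(\mathcal F_r)=1+r+r(r-2)$. Every $\varphi\in\mathscr A$ sends exactly one vertex $x\in V(\mathcal F_r)$ to $s$ and all others into $V(\mathcal J)\setminus(S\cup R)$. The key structural fact is that $\mathcal J-S$ is the complete $r$-partite $r$-graph on $V_1,\dots,V_r$. Hence an edge of $\mathcal F_r$ not containing $x$ is present if and only if its image is crossing, while every edge containing $x$ is automatically present because it meets $S$. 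So $|\mathscr A|=\sum_x|\mathscr A_x|$, with $v$ terms, where $\mathscr A_x$ is the set of $\varphi\in\mathscr A$ with $\varphi(x)=s$.

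The upper bound is trivial: each $\mathscr A_x$ has at most $n^{v-1}$ elements, so $|\mathscr A|\le v\,n^{v-1}$.

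For the lower bound on $\mathscr A_{t_j}$, I exhibit an explicit part-assignment that works. Put $\varphi(t_j)=s$ and $\varphi(t_k)\in V_{\sigma(k)}$ for $k\ne j$. The transversal edge and the petal $P_j$ contain $s$, so they are present. For $i\ne j$, the petal $P_i=\{c,t_i\}\cup A_i$ must be crossing. Since $t_i\in V_{\sigma(i)}$ for every $i\neq j$, the center must avoid all parts $V_{\sigma(i)}$ with $i\ne j$, which forces $\varphi(c)\in V_{\sigma(j)}$. The $r-2$ vertices of $A_i$ then go bijectively to the remaining $r-2$ parts in a fixed order. The vertices of $A_j$ may be placed anywhere, say in a fixed part. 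Every vertex other than $t_j$ thus has a prescribed part $V_p$. It has at least $|V_p|-|R|-v\ge n/(2r)$ admissible images once $n$ is large, because $|V_p|=(n-t+1)/r+O(1)$ and $R$ is bounded. Any injective choice respecting these parts gives an element of $\mathscr A_{t_j}$. Hence $|\mathscr A_{t_j}|\ge (n/(2r))^{v-1}$.

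Combining the two bounds gives $\vartheta_r:=(2r)^{-(v-1)}/v$, which depends only on $r$. (It is worth checking that the other classes are also nonempty, for instance $x=c$ or $x\in A_i$ forcing $c\in V_{\sigma(i)}$, but this is not needed for the inequality.)

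The only real subtlety is ensuring that the constant is uniform. This requires that the bounded set $R$ and the $O(1)$ imbalance of the parts cost only lower-order terms. It also requires that the count in the lower bound does not depend on $\sigma$, $j$ or $s$. Both are clear from the explicit assignment, provided $n$ is taken sufficiently large in terms of $r$, $t$ and the bound on $|R|$. That is the implicit standing assumption of this section.
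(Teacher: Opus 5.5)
Your proposal is correct and follows essentially the same route as the paper. You bound $|\mathscr A|$ above by $O_r(n^{v-1})$ after choosing which vertex goes to $s$, and you bound the class $\{\varphi:\varphi(t_j)=s\}$ below by $\Omega_r(n^{v-1})$ using the same explicit part assignment (center in $V_{\sigma(j)}$, the other petals crossing, $A_j$ in a fixed part); the only difference is that you make the constant explicit as $\vartheta_r=(2r)^{-(v-1)}/v$.
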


\begin{proof}
Let $h=v(\mathcal F_r)$. Since $s$ is the only exceptional vertex used by such
an embedding, the vertex of $\mathcal F_r$ mapped to $s$ has at most $h$
choices. Once this vertex and the part $V_i$ containing the image of each
remaining vertex are fixed, there are at most $O_r(n^{h-1})$ labelled embeddings. Hence $|\mathscr A|\le b_1(r)n^{h-1}$ for some constant $b_1(r)$.

Fix $j\in[r]$. We construct many embeddings with $\varphi(t_j)=s$. Put the
center in $V_{\sigma(j)}$, and put $t_\ell$ in $V_{\sigma(\ell)}$ for every
$\ell\ne j$. For each $\ell\ne j$, choose the remaining vertices of the petal
$P_\ell$ in the other parts, so that $P_\ell$ is crossing. For the petal
$P_j$, which contains $s$, choose its remaining vertices in one fixed
part, avoiding $R$ and the vertices already chosen. Since every edge meeting
$S$ belongs to $\mathcal J$, this gives at least $c_1(r)n^{h-1}$ embeddings for
all sufficiently large $n$. Taking $\vartheta_r:=c_1(r)/b_1(r)$
proves the lemma.
\end{proof}

We now apply the preceding single-fan estimate to all inactive fans in a
one-new-edge copy. The conclusion is that the $S$-star deletion removes all but
an exponentially small proportion of such copies.

\begin{lemma}\label{lem:star-deletion}
For every fixed $r\ge2$ there are constants $\theta_r>0$ and $c_r>0$ such that
the following holds. Let $t\ge r$, let
$e\in \overline{\mathcal J}_{\mathrm{par}}$, and let
$Z\subseteq V(\mathcal J)\setminus S$ with $e\subseteq Z$.
Then, for all sufficiently large $n$,
\[
        N_{\mathcal F}\bigl((\mathcal J-\mathcal S(Z))\cup\{e\};e\bigr)
        \le
        \rho_t N_{\mathcal F}(\mathcal J\cup\{e\};e),
\]
where $\rho_t:=c_r(1-\theta_r)^{t-1}$.
In particular, $\rho_t\to0$ as $t\to\infty$.
\end{lemma}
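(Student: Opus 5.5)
The plan is to show that in almost every one-new-edge copy some blowup transversal edge lands in $\mathcal S(Z)$. Fix $e$ and $Z\supseteq e$, and let $\varphi$ be an embedding counted in $N_{\mathcal F}(\mathcal J\cup\{e\};e)$. By Lemma~\ref{lem:one-edge-active} there is a unique active fan $\mathcal F_r^i$ with an edge $f$ satisfying $\varphi(f)=e$, and each inactive fan $\mathcal F_r^j$ ($j\ne i$) contains exactly one vertex $s_j$ of $S$, these vertices being distinct.

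\textbf{Step 1: the witness coordinate.} First I find a coordinate $j_0\in[r]$ with $\varphi(t_{j_0}^i)\in Z$.
\begin{itemize}
\item If $f$ is the transversal edge of the active fan, every $\varphi(t_\ell^i)$ lies in $e\subseteq Z$.
\item If $f=P_{j_0}$, then $\varphi(t_{j_0}^i)\in e\subseteq Z$.
\end{itemize}

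\textbf{Step 2: the killing configuration.} Suppose that for every $\ell\in[r]\setminus\{j_0\}$ some inactive fan $j_\ell$ has $\varphi(t_\ell^{j_\ell})=s_{j_\ell}$. These fans are automatically distinct, since each inactive fan contains only one vertex of $S$. Then the blowup transversal edge with index vector $i$ at coordinate $j_0$ and $j_\ell$ at coordinate $\ell$ has the following properties:
\begin{itemize}
\item it uses at least two distinct fan indices, so it is an edge of $\mathcal F$;
\item its image is $\{z\}\cup Y$ with $z\in Z$ and $Y\in\binom{S}{r-1}$, so it lies in $\mathcal S(Z)$.
\end{itemize}
Such a $\varphi$ is therefore not an embedding into $(\mathcal J-\mathcal S(Z))\cup\{e\}$. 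Consequently every surviving copy has a coordinate $\ell\ne j_0$ at which no inactive fan places its $S$-vertex.

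\textbf{Step 3: counting.} I would condition on all data except the inactive fans' internal structure: the active fan embedding, the bijection $j\mapsto s_j$, and the part-pattern of the inactive fans. The pattern is, for each inactive fan, a permutation $\sigma_j$ prescribing the parts $V_{\sigma_j(\ell)}$ of its non-$S$ transversal vertices. There are $O_{\mathcal F}(1)$ such patterns; blowup transversal edges avoiding $S$ must be crossing, which forces strong consistency among the $\sigma_j$, but I only need that a fixed pattern is specified.

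Within a class, the inactive fans are embedded almost independently: each is an element of the set $\mathscr A$ of Lemma~\ref{lem:single-fan}, with $R$ the bounded set of vertices used elsewhere. The only dependence is vertex-disjointness and the crossing conditions on blowup edges, which are already encoded by fixing the part-patterns. So the class count is, up to a factor $1\pm o(1)$, a product over inactive fans of counts of the type bounded by Lemma~\ref{lem:single-fan}.

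That lemma says a proportion at least $\vartheta_r$ of each fan's choices put $s_j$ at transversal coordinate $\ell$. Hence the proportion of choices in which coordinate $\ell$ is used by no inactive fan is at most $(1-\vartheta_r)^{t-1}(1+o(1))$. A union bound over $\ell\ne j_0$ gives a surviving fraction at most $2(r-1)(1-\vartheta_r)^{t-1}$ in every class. Summing over classes yields the lemma with $\theta_r=\vartheta_r$ and $c_r=2r$.

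\textbf{Main obstacle.} The hard step is justifying the near-product structure in Step 3. Two things interact:
\begin{itemize}
\item the blowup transversal edges among inactive fans, which impose the crossing constraints tying the $\sigma_j$ together;
\item the fact that Lemma~\ref{lem:single-fan} counts embeddings with a prescribed $\sigma$ and a free position for $s$, whereas in a full copy the admissible positions of $s$ might interact with the other fans' patterns.
\end{itemize}
I would first try to show that once the parts of all non-$S$ transversal vertices are fixed, every blowup edge is either crossing or meets $S$ regardless of where each $s_j$ sits. That would make the constraints exactly per-fan, with only the $O(1)$-vertex disjointness coupling left to absorb into $R$. If some patterns are incompatible with certain positions of $s_j$, I would refine the classes so that Lemma~\ref{lem:single-fan} still applies fan by fan, possibly with a weaker constant $\theta_r$.
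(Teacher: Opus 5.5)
\textbf{There is a genuine gap: your Step~3 does not work when $e$ is the transversal edge of the active fan.}

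Your Steps~1 and~2 are correct and match the paper's final count. A witness $z=\varphi(t_{j_0}^i)\in Z$, together with $S$-vertices of inactive fans at every other coordinate, gives a blowup transversal edge in $\mathcal S(Z)$.

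The gap is in Step~3, where you treat the case that $e$ is the \emph{transversal edge} of the active fan in the same way as the petal case. The product structure you need requires each inactive fan to range over a set of type $\mathscr A$ from Lemma~\ref{lem:single-fan}, with a fixed permutation and a free position for $s_j$. This structure exists only when the active transversal edge is a crossing edge of $\mathcal J$, i.e.\ when $e$ is a petal. In that case the active transversal edge defines one permutation $\sigma$. Every partite inactive transversal vertex at coordinate $\ell$ is then forced into $V_{\sigma(\ell)}$, so any class with some $\sigma_j\ne\sigma$ is empty. Conversely, every blowup transversal edge is crossing or meets $S$, wherever the $s_j$ sit.

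When $e$ is itself the active transversal edge, its part sequence is not a permutation. Replacing an active transversal vertex by an inactive one must still give a crossing edge, and this restricts where $s_j$ may sit. Lemma~\ref{lem:single-fan} then gives no lower bound for the valid embeddings. Concretely, take $r=2$ and $e\subseteq V_1$. In your class $\sigma_j=\mathrm{id}$, every valid embedding has $t_1^j=s_j$. Otherwise $t_1^j\in V_1$ is partite, and $\{t_1^j,t_2^i\}\subseteq V_1$ is a non-edge different from $e$. So in the (nonempty) class where all $\sigma_j=\mathrm{id}$, the proportion of copies with no $S$-vertex at coordinate $2$ is $1$, not at most $(1-\vartheta_r)^{t-1}$. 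With $j_0=1$ your bound says nothing. Refining the classes, as you suggest, cannot fix this, because the restriction on the position of $s_j$ is genuine.

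\textbf{What is missing: a separate argument for the transversal case (the paper's Claim~1).} Write $\varphi(t_k^i)\in V_{p_k}$. An inactive partite transversal vertex can sit at coordinate $k$ only if deleting $p_k$ leaves $r-1$ distinct parts, and it must then lie in the unique missing part. There are at most two such coordinates, namely the two carrying a part repeated exactly twice. Since each inactive fan has at least $r-1$ partite transversal vertices:
\begin{itemize}
\item for $r\ge4$, no such copy exists;
\item for $r=3$, the two forced vertices lie in the missing part, and together with the remaining active transversal vertex they form a second non-crossing blowup edge, so again no copy exists;
\item for $r=2$, no inactive triangle has its center in $S$, and all inactive triangles put their $S$-vertex at a common transversal coordinate $\ell$. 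Then $\{t_\ell^j,t_{3-\ell}^i\}\mapsto\{s_j,z\}\in\mathcal S(Z)$ kills every copy. This is your Step~2 with $j_0=3-\ell$ chosen adaptively, which your fixed-$j_0$ union bound does not capture.
\end{itemize}

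\textbf{The petal case is fine.} Once the transversal case is excluded, your Step~3 goes through. Your direct product comparison is a valid and slightly more direct alternative to the paper's sequential conditioning in Claim~3. It bounds the tuples with no $S$-vertex at coordinate $\ell$ by
$\prod_j|\{\phi\in\mathscr A_j:\phi(t_\ell^j)\ne s_j\}|\le(1-\vartheta_r)^{t-1}\prod_j|\mathscr A_j|$.
It bounds the valid total from below by $\prod_j|\mathscr A_j|$ minus an $O(n^{-1})$-fraction of colliding tuples.
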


\begin{proof}
Let $\vartheta_r$ be the constant from Lemma~\ref{lem:single-fan}, and put
$\theta_r=\vartheta_r/2$, taking $n$ sufficiently large whenever needed. Let
$\mathscr C(e)$ be the family of labelled copies of $\mathcal F$ in
$\mathcal J\cup\{e\}$ which contain $e$. We shall use the following simple
observation throughout: every edge of such a copy contained in
$V(\mathcal J)\setminus S$, except possibly $e$, must be a crossing edge of
$\mathcal J$.

\medskip
\begin{claim}\label{CLAIM-1}
Every copy in $\mathscr C(e)$ which survives in
$(\mathcal J-\mathcal S(Z))\cup\{e\}$ uses $e$ as a petal of its active fan.
\end{claim}
\begin{proof}[Proof of Claim~\ref{CLAIM-1}]
By Lemma~\ref{lem:one-edge-active}, the edge $e$ lies in the active fan, and
the remaining $t-1$ inactive fans use the vertices of $S$ bijectively. Suppose
for a contradiction that $e$ is the transversal edge of the active fan, say $e=\{t_1^i,\ldots,t_r^i\}$ and write $t_k^i\in V_{p_k}$. Since $e\notin\mathcal J$, the sequence
$(p_1,\ldots,p_r)$ is not a permutation of $[r]$.
Call a coordinate $k$ \emph{feasible} if deleting $p_k$ leaves
$r-1$ distinct entries.  Since $(p_1,\ldots,p_r)$ is not a permutation,
the feasible coordinates are very restricted: there are none unless the
multiset $\{p_1,\ldots,p_r\}$ has exactly one repeated part, appearing twice,
and exactly one missing part; in that exceptional case the two feasible
coordinates are precisely the two coordinates carrying the repeated part.

Assume first that $r\ge3$. Let $\mathcal F_r^j$ be an inactive fan, and suppose
that $t_k^j$ is a partite transversal vertex lying in $V_g$.  Replacing
$t_k^i$ by $t_k^j$ in the active transversal edge gives a blowup transversal
edge contained in $V(\mathcal J)\setminus S$ and different from $e$.  Hence
this edge must be crossing, so
\[
        (p_1,\ldots,p_{k-1},g,p_{k+1},\ldots,p_r)
\]
is a permutation of $[r]$.  Thus $k$ is feasible, and $g$ is the unique
missing part after $p_k$ is deleted.  If $r\ge4$, this is impossible because
an inactive fan has at least $r-1\ge3$ partite transversal vertices, but there
are at most two feasible coordinates.

It remains, within this case, to consider $r=3$.  The same argument shows that
an inactive fan cannot have all three transversal vertices outside $S$.
Thus it has exactly two partite transversal vertices.  These must lie in the
two feasible coordinates, and both must lie in the unique missing part of
$(p_1,p_2,p_3)$.  The two feasible active coordinates are the two coordinates
carrying the repeated part.  Taking these two inactive partite transversal
vertices together with the active transversal vertex in the remaining
coordinate gives a non-crossing blowup transversal edge contained in
$V(\mathcal J)\setminus S$, different from $e$.  This is impossible.

It remains to consider $r=2$. Then the two vertices of $e$ lie in the same
part. An inactive triangle cannot have its center in $S$, for then its
two transversal vertices would both be forced into the other part, making
its own transversal edge a member of $\overline{\mathcal J}_{\mathrm{par}}$
different from $e$. Hence every
inactive triangle uses its unique exceptional vertex in a transversal
coordinate. This coordinate is common to all inactive triangles. Otherwise,
two inactive triangles would provide partite transversal vertices in different
coordinates, and these two vertices would form another member of
$\overline{\mathcal J}_{\mathrm{par}}$. Let
the common exceptional coordinate be $\ell$. For any inactive triangle
$\mathcal F_r^j$, the blowup transversal edge $\{t_\ell^j,t_{3-\ell}^i\}$
is mapped to $\{s,z\}$ with $s\in S$ and $z\in e\subseteq Z$, and so lies in
$\mathcal S(Z)$. Thus, the copy is deleted, a contradiction.

Therefore, $e$ is not the transversal edge of the active fan. Since it lies in
that fan, it must be a petal.
\end{proof}

It remains to count copies in which $e$ is a petal of the active fan.

\begin{claim}\label{CLAIM-2}
Let $\mathcal Q\in\mathscr C(e)$ be a copy in which $e$ is a petal of the
active fan $\mathcal F_r^i$. Then there is a permutation $\sigma$ of $[r]$ such
that $t_\ell^i\in V_{\sigma(\ell)}$ for every $\ell\in[r]$ and, for every
inactive fan $\mathcal F_r^j$, every partite transversal vertex $t_\ell^j$
lies in $V_{\sigma(\ell)}$. Conversely, once the individual inactive fans are
embedded in $\mathcal J$ with these transversal-coordinate constraints, all
blowup transversal edges involving at least one inactive fan are present in
$\mathcal J$.
\end{claim}

\begin{proof}[Proof of Claim~\ref{CLAIM-2}]
The inactive fans use all vertices of $S$ bijectively, so the active fan is
disjoint from $S$. Since $e$ is a petal, the transversal edge of the active
fan is an edge of $\mathcal J$ contained in $V(\mathcal J)\setminus S$. Hence it is crossing, and determines
a permutation $\sigma$ with $t_\ell^i\in V_{\sigma(\ell)}$ for all
$\ell\in[r]$.

Let $\mathcal F_r^j$ be an inactive fan, and suppose that $t_\ell^j$ is a partite
vertex. Replacing $t_\ell^i$ in the active transversal edge by $t_\ell^j$ gives a
blowup transversal edge contained in $V(\mathcal J)\setminus S$ and different from $e$. Hence, this edge is
crossing, which forces $t_\ell^j\in V_{\sigma(\ell)}$.
Conversely, consider a blowup transversal edge
$\{t_1^{j_1},\ldots,t_r^{j_r}\}$ involving at least one inactive fan. If it
meets $S$, then it belongs to $\mathcal J$. If it is contained in $V(\mathcal J)\setminus S$, then each of its
vertices lies in a different part of $V_{\sigma(1)},\ldots,V_{\sigma(r)}$, and hence it is a crossing edge of $\mathcal J$.
\end{proof}

We fix the active fan, its embedding, and the assignment of the vertices of
$S$ to the inactive fans. Let $P_k^i$ be the petal of the active fan mapped to
$e$. By Claim~\ref{CLAIM-2}, these choices determine a permutation $\sigma$. The transversal vertex
$t_k^i$ is mapped to a vertex of $e$, and we write this vertex as $z\in Z$.

Let $B$ be the set of indices of the inactive fans. For each $j\in B$, let
$s_j\in S$ be the vertex assigned to $\mathcal F_r^j$. Let $\mathscr C_0$ be
the family of labelled edge-preserving embeddings $\psi$ of the disjoint union
of the inactive fans into $\mathcal J$ such that $\psi$ avoids the fixed active fan, maps each
$\mathcal F_r^j$ using $s_j$ as its unique exceptional vertex, and, for every
$j\in B$ and every $\ell\in[r]$, satisfies
$\psi(t_\ell^j)\in V_{\sigma(\ell)}$ whenever $\psi(t_\ell^j)\notin S$.
By Claim~\ref{CLAIM-2}, every member of
$\mathscr C_0$, together with the fixed active fan, gives a copy of
$\mathcal F$ in $\mathcal J\cup\{e\}$.
If $\varpi$ is a partial embedding of some inactive fans, write
\[
        \mathscr C(\varpi)
        :=
        \{\psi\in\mathscr C_0:\psi\text{ extends }\varpi\}.
\]

\begin{claim}\label{CLAIM-3}
Let $\varpi$ be a partial embedding of some inactive fans with
$\mathscr C(\varpi)\ne\emptyset$. Let $j\in B$ be an index not yet
embedded by $\varpi$. For $\ell\in[r]$, put
\[
        \mathscr C_\ell(\varpi,j)
        :=
        \{\psi\in\mathscr C(\varpi):\psi(t_\ell^j)=s_j\}.
\]
Then
$|\mathscr C_\ell(\varpi,j)|
\ge \theta_r|\mathscr C(\varpi)|$ for every $\ell\in[r]$.
\end{claim}
\begin{proof}[Proof of Claim~\ref{CLAIM-3}]
Let $R$ be the set of partite vertices already unavailable, namely the partite
vertices in the fixed active fan together with those used by $\varpi$.
Let $\mathscr A_j$ be the set of labelled edge-preserving embeddings $\phi$ of
$\mathcal F_r^j$ into $\mathcal J-R$ which use $s_j$ as their unique exceptional vertex
and satisfy $\phi(t_\ell^j)\in V_{\sigma(\ell)}$ whenever
$\phi(t_\ell^j)\ne s_j$.
For $\phi\in \mathscr A_j$, let
\[
        w(\phi)
        :=
        |\{\psi\in\mathscr C(\varpi):
              \psi|_{V(\mathcal F_r^j)}=\phi\}|.
\]
Then
\[
        |\mathscr C(\varpi)|
        =
        \sum_{\phi\in \mathscr A_j} w(\phi),
        \qquad
        |\mathscr C_\ell(\varpi,j)|
        =
        \sum_{\phi\in \mathscr A_j:\,\phi(t_\ell^j)=s_j} w(\phi).
\]
We claim that the weights $w(\phi)$ are asymptotically independent of
$\phi$.  Let $B_{\rm emb}$ be the set of indices of the inactive fans already
embedded by $\varpi$, let
\[
        k'=\bigl|B\setminus(B_{\rm emb}\cup\{j\})\bigr|,
        \qquad
        N=k'(v(\mathcal F_r)-1).
\]
If $k'=0$, then $w(\phi)=1$ for every $\phi\in\mathscr A_j$, and the claim
follows from Lemma~\ref{lem:single-fan}.  Hence assume $k'>0$.

For each still unembedded fan $\mathcal F_r^c$, record the vertex mapped to
$s_c$ and, for every other vertex, the part containing its image.  Let
$\Sigma$ be the finite set of records for which the individual fan edges lie
in $\mathcal J$ and the transversal-coordinate condition from
Claim~\ref{CLAIM-2} is satisfied.  The set $\Sigma$ depends only on the fixed
data, not on the actual partite labels used by $\phi$.  Since
$\mathscr C(\varpi)\ne\emptyset$, at least one such record is feasible, so
$\Sigma\ne\emptyset$.

For $\xi\in\Sigma$, let $\lambda_m(\xi)$ be the number of still unembedded
partite vertices prescribed by $\xi$ to lie in $V_m$.  Then
$\sum_m\lambda_m(\xi)=N$.  For a fixed $\xi$ and $\phi$, the number of
extensions realizing $\xi$ is
\[
        \prod_{m=1}^r
        \bigl(|V_m|-a_m(\phi,\xi)\bigr)_{\lambda_m(\xi)},
\]
where $a_m(\phi,\xi)=O_{r,t}(1)$ accounts for the already unavailable vertices
in $V_m$.  Since $|V_m|=n/r+O_{r,t}(1)$, this product equals
\[
        (1+O_{r,t}(n^{-1}))
        \prod_{m=1}^r |V_m|^{\lambda_m(\xi)}
\]
uniformly in $\phi$.  Summing over $\xi\in\Sigma$ gives
\[
        w(\phi)=(1+O_{r,t}(n^{-1}))\gamma n^N,
\]
uniformly for $\phi\in\mathscr A_j$, where $\gamma>0$ is independent of
$\phi$.

Consequently, by Lemma~\ref{lem:single-fan},
\[
        \frac{|\mathscr C_\ell(\varpi,j)|}
             {|\mathscr C(\varpi)|}
        =
        \frac{\sum_{\phi\in \mathscr A_j:\,\phi(t_\ell^j)=s_j}w(\phi)}
             {\sum_{\phi\in \mathscr A_j}w(\phi)}
        \ge
        \frac{\vartheta_r}{2}
        =
        \theta_r
\]
for all sufficiently large $n$.
\end{proof}

\medskip
We now finish the count. For $\psi\in\mathscr C_0$ and $\ell\ne k$, say that an
inactive fan $\mathcal F_r^j$, $j\in B$, is \emph{$\ell$-good} if $\psi(t_\ell^j)=s_j$.
If $\psi$ has an $\ell$-good inactive fan for every $\ell\ne k$, choose
one such fan and denote its index by $j_\ell$.  These indices are distinct:
one inactive fan uses exactly one vertex of $S$, so it cannot be good in two
different coordinates.  The blowup transversal edge $\{t_k^i\}\cup\{t_\ell^{j_\ell}:\ell\ne k\}$
is mapped to $\{z\}\cup\{s_{j_\ell}:\ell\ne k\}$, which belongs to
$\mathcal S(Z)$ because $z\in Z$ and
$\{s_{j_\ell}:\ell\ne k\}\in\binom{S}{r-1}$.  Hence every surviving member of
$\mathscr C_0$ has some coordinate $\ell\ne k$ for which no inactive fan is
$\ell$-good.

Fix $\ell\ne k$. Applying Claim~\ref{CLAIM-3} successively to the inactive fans gives
\[
        |\{\psi\in\mathscr C_0:
          \text{no inactive fan is }\ell\text{-good}\}|
        \le
        (1-\theta_r)^{t-1}|\mathscr C_0|.
\]
Taking the union over the $r-1$ choices of $\ell\ne k$, the proportion of
surviving members of $\mathscr C_0$ is at most $(r-1)(1-\theta_r)^{t-1}$. This estimate is uniform over the initial choice of the active fan, its
embedding, and the assignment of the vertices of $S$ to the inactive fans.
Together with Claim~\ref{CLAIM-1}, which rules out surviving copies in which $e$ is the
active transversal edge, we obtain
\[
        N_{\mathcal F}\bigl((\mathcal J-\mathcal S(Z))\cup\{e\};e\bigr)
        \le
        (r-1)(1-\theta_r)^{t-1}
        N_{\mathcal F}(\mathcal J\cup\{e\};e).
\]
The lemma follows with $c_r=r$.
\end{proof}

\subsection{Proof of the supersaturation theorem}

We now assemble the construction. The selection lemma below supplies enough
members of $\overline{\mathcal J}_{\mathrm{par}}$ of any prescribed
non-crossing type inside a small set
$Z$, so that the deleted $S$-star edges can be replaced and the total number
of edges becomes exactly $\ex(n,\mathcal F)+q$.

We use the following elementary selection lemma, with $m_S=\binom{t-1}{r-1}$ as above.

\begin{lemma}\label{lem:selection}
Let $a_1,\ldots,a_r$ be nonnegative integers with
$\sum_i a_i=r$ and $(a_1,\ldots,a_r)\ne(1,\ldots,1)$. There is a constant
$b_0=b_0(r,t)>0$ such that the following holds for all sufficiently large $n$ and for every integer $q$ with $1\le q\le n$. There are subsets
$Z_i\subseteq V_i$ of a common size $\ell=O_{r,t}(q^{1/r})$ such that at least
$q+m_Sr\ell$ members $e\in \overline{\mathcal J}_{\mathrm{par}}$ with
$e\subseteq Z_1\cup\cdots\cup Z_r$ satisfy
$|e\cap V_i|=a_i$ for every $i\in[r]$.
Moreover,
\(
        q+m_Sr\ell\le b_0q.
\)
The constant $b_0$ may be chosen to satisfy $b_0=O_r(t^r)$.
\end{lemma}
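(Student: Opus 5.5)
The plan is a direct counting argument: choose $\ell$ as a function of $q$, count the $r$-sets of the prescribed type inside $Z_1\cup\cdots\cup Z_r$, and show that this count dominates $q+m_Sr\ell$. Since $|V_i|=(n-t+1)/r+O(1)$ is linear in $n$, for $q\le n$ and $n$ large we may take any $Z_i\subseteq V_i$ of a common size $\ell\le |V_i|$, provided $\ell=O_{r,t}(q^{1/r})$ with a small enough implicit constant. The number of $r$-sets $e\subseteq Z_1\cup\cdots\cup Z_r$ with $|e\cap V_i|=a_i$ for all $i$ is exactly
\[
\prod_{i=1}^r\binom{\ell}{a_i}.
\]
Every such $e$ is a member of $\overline{\mathcal J}_{\mathrm{par}}$. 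Indeed, since $(a_1,\ldots,a_r)\ne(1,\ldots,1)$ and $\sum a_i=r$, some $a_i\ge2$, so $e$ is non-crossing, and $e$ avoids $S$.

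Next I bound the count from below. Using $\binom{\ell}{a}\ge (\ell/a)^a\ge \ell^a/r^r$ for $\ell\ge a$ (and $\binom{\ell}{0}=1$), the count is at least $c_r\ell^r$, where $c_r:=r^{-r^2}$, as soon as $\ell\ge r$. It therefore suffices to choose $\ell$ with
\[
c_r\ell^r\ge q+m_Sr\ell.
\]
I will take
\[
\ell:=\max\Bigl\{r,\ \bigl\lceil (2q/c_r)^{1/r}\bigr\rceil,\ \bigl\lceil (2m_Sr/c_r)^{1/(r-1)}\bigr\rceil\Bigr\}.
\]
The second term gives $c_r\ell^r/2\ge q$. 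The third term gives $c_r\ell^{r-1}/2\ge m_Sr$, hence $c_r\ell^r/2\ge m_Sr\ell$. Adding the two yields the required inequality. For $r=2$ the exponent $1/(r-1)$ equals $1$, which is fine.

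The remaining step is the size estimates. Because $q\ge1$, each of the three terms in the maximum is $O_{r,t}(q^{1/r})$, so $\ell=O_{r,t}(q^{1/r})$. For $q\le n$ we get $\ell\le C q^{1/r}\le Cn^{1/r}\le |V_i|$ once $n$ is large, so the choice is realisable.

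For the "moreover" part, $m_Sr\ell\le m_Sr\bigl(r+(2q/c_r)^{1/r}+1+(2m_Sr/c_r)^{1/(r-1)}\bigr)$, and each term is at most a constant times $q$ because $q\ge1$. This gives $q+m_Sr\ell\le b_0q$. To track the dependence on $t$, use $m_S=\binom{t-1}{r-1}=O(t^{r-1})$. The dominant contribution is $m_Sr\cdot(2m_Sr/c_r)^{1/(r-1)}=O_r(m_S^{r/(r-1)})$. For $r\ge2$ this is $O_r(t^{r})$, since $(r-1)\cdot r/(r-1)=r$; the other terms are $O_r(t^{r-1})$. Hence $b_0=O_r(t^r)$.

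The only point requiring care is this last bookkeeping: making sure the $m_S$-driven lower threshold on $\ell$ is what produces the $t^r$ bound, and that $q\ge1$ lets every constant be absorbed into $b_0q$. I expect no real obstacle.
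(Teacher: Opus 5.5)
Your proposal is correct and follows essentially the same route as the paper: count $\prod_i\binom{\ell}{a_i}\ge c\,\ell^r$, take $\ell\asymp q^{1/r}$ with a constant large enough (of order $m_S^{1/(r-1)}=O_r(t)$) that $c\,\ell^r\ge q+m_Sr\ell$, and absorb everything into $b_0q$ using $q\ge1$ to get $b_0=O_r(t^r)$. The only cosmetic difference is that you take a maximum of separate thresholds where the paper uses a single $\ell=\lceil c_0q^{1/r}\rceil$ with $c_1c_0^r\ge 1+m_Sr(c_0+1)$. You also explicitly check that these $r$-sets lie in $\overline{\mathcal J}_{\mathrm{par}}$, which the paper leaves implicit.
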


\begin{proof}
For fixed subsets $Z_i\subseteq V_i$ of common size $\ell$, the number of $r$-sets
$e\subseteq Z_1\cup\cdots\cup Z_r$ with
$|e\cap V_i|=a_i$ for every $i$ is
\[
        p_{\ell}:=\prod_{i=1}^r \binom{\ell}{a_i},
\]
where a factor with $a_i=0$ is interpreted as $1$. Since there are only finitely
many vectors $(a_1,\ldots,a_r)$ with $\sum_i a_i=r$, there are constants
$c_1>0$ and $\ell_0=\ell_0(r)$ such that
$p_{\ell}\ge c_1\ell^r$ for every $\ell\ge \ell_0$ and for every such vector.

Choose a constant $c_0=c_0(r,t)\ge \ell_0$ large enough that
\[
        c_1c_0^r\ge 1+m_Sr(c_0+1).
\]
Since $m_S=\binom{t-1}{r-1}=O_r(t^{r-1})$, we may take $c_0=O_r(t)$. Put
\(
        \ell:=\left\lceil c_0q^{1/r}\right\rceil.
\)
Then 
\[\ell\le c_0q^{1/r}+1\le (c_0+1)q\]
because $q\ge1$. Also,
\[
        p_{\ell}\ge c_1\ell^r\ge c_1c_0^rq
        \ge q+m_Sr(c_0+1)q
        \ge q+m_Sr\ell.
\]
Thus there are at least $q+m_Sr\ell$ members of
$\overline{\mathcal J}_{\mathrm{par}}$ of the required type inside
$Z_1\cup\cdots\cup Z_r$. Moreover,
\[
        q+m_Sr\ell\le (1+m_Sr(c_0+1))q.
\]
Hence the conclusion holds with
\[
        b_0:=1+m_Sr(c_0+1)=O_r(t^r).
\]
Finally, since $q\le n$, we have
$\ell=O_{r,t}(n^{1/r})=o(n)$, while each balanced part has size
$n/r+O_{r,t}(1)$.  Thus the sets $Z_i$ fit inside the parts for all
sufficiently large $n$.
\end{proof}

\begin{proof}[Proof of Theorem~\ref{thm:main}]
Fix $r\ge 2$ and $K>1$. For each $t\ge r$, let $b_0=b_0(r,t)$ be the constant
from Lemma~\ref{lem:selection}. By that lemma, $b_0=O_r(t^r)$. On the other hand,
Lemma~\ref{lem:star-deletion} gives $\rho_t=c_r(1-\theta_r)^{t-1}$, which decays exponentially in $t$. Hence we may choose $t\ge r$ so large that
\[
        b_0\rho_t\le \frac1{2K}.
\]
Set $\mathcal{F}:=\mathcal{F}_{r,t}$. By Lemma~\ref{thm:weak-stability} and
Theorem~\ref{thm:exact}, both proved in Section~\ref{sec:exact},
$\mathcal{F}$ is stable and, for all sufficiently large $n$, its unique
extremal $r$-graph is
$\mathcal{J}=\mathcal{J}_{t-1}(n)$.
Let $a_{\mathcal F}$ be the constant from Lemma~\ref{lem:two-new}. By
Lemma~\ref{lem:c-scale}, there is a constant $b_{\mathcal F}>0$ such that
\(
        c(n,\mathcal F)\ge b_{\mathcal F}n^{m_{\mathcal F}}
\)
for all sufficiently large $n$. Choose
\[
        \delta_{\mathcal F}\le
        \min\left\{1,\frac{b_{\mathcal F}}{2Ka_{\mathcal F}b_0^2}\right\}.
\]
Now let $n$ be sufficiently large and let $q$ be an integer with $1\le q\le \delta_{\mathcal F}n$. In particular, $q\le n$.

Choose $e^*\in\overline{\mathcal J}_{\mathrm{par}}$ such that
\[
        N_{\mathcal F}(\mathcal J\cup\{e^*\};e^*)=c(n,\mathcal F),
\]
and put $a_i:=|e^*\cap V_i|$ for $i\in[r]$.  Since $e^*$ is a partite
non-edge, $(a_1,\ldots,a_r)\ne(1,\ldots,1)$.  Lemma~\ref{lem:selection}
gives sets $Z_i\subseteq V_i$ of common size
$\ell=O_{r,t}(q^{1/r})$.  Let $Z:=Z_1\cup\cdots\cup Z_r$.  There are at least
$q+m_S|Z|=q+m_Sr\ell$ members
$e\in\overline{\mathcal J}_{\mathrm{par}}$ with $e\subseteq Z$ and
$|e\cap V_i|=a_i$ for every $i\in[r]$, and
$q+m_S|Z|\le b_0q$.  Choose distinct such members
$e_1,\ldots,e_{m_1}$, where
\[
        m_1:=q+m_S|Z|,
\]
and set
\[
        \mathcal H:=(\mathcal J-\mathcal S(Z))\cup\{e_1,\ldots,e_{m_1}\}.
\]
Since $|\mathcal S(Z)|=m_S|Z|$ and $|\mathcal J|=\ex(n,\mathcal F)$ by
Theorem~\ref{thm:exact}, we have
\[
        |\mathcal H|=|\mathcal J|-m_S|Z|+m_1=\ex(n,\mathcal F)+q.
\]
Moreover, $m_1\le b_0q$.

It remains to count copies of $\mathcal{F}$ in $\mathcal{H}$. Since
$\mathcal J$ is $\mathcal F$-free, every copy of $\mathcal F$ in
$\mathcal H$ uses at least one new edge among $e_1,\ldots,e_{m_1}$. First consider copies using exactly one new edge $e_i$.  Such a copy is
contained in $(\mathcal J-\mathcal S(Z))\cup\{e_i\}$ and contains $e_i$.
Since $|e_i\cap V_j|=|e^*\cap V_j|$ for every $j\in[r]$, an automorphism of
$\mathcal J$ preserving $S$ and each part setwise maps $e^*$ to $e_i$.  Hence
\[
        N_{\mathcal F}(\mathcal J\cup\{e_i\};e_i)=c(n,\mathcal F).
\]
As $e_i\subseteq Z$, Lemma~\ref{lem:star-deletion} gives
\[
        N_{\mathcal F}((\mathcal J-\mathcal S(Z))\cup\{e_i\};e_i)
        \le \rho_t c(n,\mathcal F).
\]
Therefore the number of copies using exactly one new edge is at most
\[
        m_1\rho_t c(n,\mathcal F)
        \le b_0q\rho_t c(n,\mathcal F)
        \le \frac1{2K}q c(n,\mathcal F).
\]
Next, by Lemma~\ref{lem:two-new}, the number of copies using at least two new
edges is at most
\[
        a_{\mathcal{F}}m_1^2n^{m_{\mathcal F}-1}\le
        a_{\mathcal{F}}b_0^2q^2n^{m_{\mathcal F}-1}.
\]
Using $c(n,\mathcal F)\ge b_{\mathcal F}n^{m_{\mathcal F}}$ and
$q\le\delta_{\mathcal F}n$, we get
\[
        a_{\mathcal{F}}b_0^2q^2n^{m_{\mathcal F}-1}
        \le
        \frac{a_{\mathcal F}b_0^2}{b_{\mathcal F}}\frac qn\,q c(n,\mathcal F)
        \le
        \frac1{2K}q c(n,\mathcal F).
\]
Combining the two estimates yields $N_{\mathcal{F}}(\mathcal{H})\le K^{-1}q c(n,\mathcal{F})$. Since $|\mathcal{H}|=\ex(n,\mathcal{F})+q$, we have
\[
        h_{\mathcal{F}}(n,q)\le N_{\mathcal{F}}(\mathcal{H})\le
        K^{-1}q c(n,\mathcal{F}).
\]
The theorem follows.
\end{proof}

\section{Exact Tur\'an theorem for \texorpdfstring{$\mathcal{F}_{r,t}$}{Frt}}\label{sec:exact}
This section proves Theorem~\ref{thm:exact}. Before entering the hypergraph
part of the proof, we isolate the standard stability tools and settle the graph
case. The stability step is an Erd\H{o}s--Simonovits-type stability statement for
$\mathcal F_{r,t}$, obtained from the blowup and removal lemmas together with
the known stability theorem for the fan $\mathcal F_r$. The graph case $r=2$ is
then handled separately by Simonovits' exact theorem and the Erd\H{o}s--Gallai
theorem for matchings. After these reductions, the rest of the section assumes
$r\ge3$ and proves that every deviation from the join construction either
creates a copy of $\mathcal F_{r,t}$ or is too sparse to be extremal.

We first record the two standard lemmas used to derive
Erd\H{o}s--Simonovits-type stability.

\begin{lemma}[\cite{Erdos1964, ErdosSimonovits1983}]\label{lem:blowup}
Let $\mathcal{F}$ be a fixed $r$-graph, and fix a positive integer $s$. If an $n$-vertex $r$-graph
$\mathcal{H}$ contains $\Omega(n^{v(\mathcal{F})})$ copies of $\mathcal{F}$, then, for all sufficiently
large $n$, $\mathcal{H}$ contains a copy of the blowup $\mathcal{F}[s]$.
\end{lemma}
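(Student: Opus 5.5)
The statement to prove is the blowup lemma (Erdős; Erdős--Simonovits): if an $n$-vertex $r$-graph $\mathcal H$ contains at least $\varepsilon n^{v}$ copies of a fixed $r$-graph $\mathcal F$, where $v:=v(\mathcal F)$, then $\mathcal H\supseteq\mathcal F[s]$ for large $n$. I would deduce it from Erdős's theorem on complete $v$-partite $v$-graphs. For fixed $v$, $s$ and $\varepsilon'>0$, every $N$-vertex $v$-graph with at least $\varepsilon' N^{v}$ edges contains $K^{(v)}_{s,\dots,s}$ once $N$ is large. That theorem is proved by induction on $v$: count pairs of a $(v-1)$-set and a vertex completing it to an edge, and apply Jensen/Kővári--Sós--Turán convexity to find $s$ vertices with a common link of positive density, whose link $(v-1)$-graph then contains $K^{(v-1)}_{s,\dots,s}$.

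First I handle labelling. Fix an ordering $u_1,\dots,u_v$ of $V(\mathcal F)$. Each labelled copy $\varphi$ gives an ordered $v$-tuple $(\varphi(u_1),\dots,\varphi(u_v))$ of distinct vertices of $\mathcal H$. Take a uniformly random partition $V(\mathcal H)=W_1\cup\dots\cup W_v$. A given tuple has $\varphi(u_i)\in W_i$ for all $i$ with probability $v^{-v}$, so some partition keeps at least $\varepsilon v^{-v}n^{v}$ copies that are ``aligned'' in this sense.

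Next I build an auxiliary $v$-partite $v$-graph $\mathcal G$ on $W_1\cup\dots\cup W_v$. Its edges are the $v$-sets $\{w_1,\dots,w_v\}$ with $w_i\in W_i$ for which $u_i\mapsto w_i$ is an embedding of $\mathcal F$ into $\mathcal H$. Distinct aligned copies give distinct edges of $\mathcal G$, because alignment determines the labelling. Hence $|\mathcal G|\ge \varepsilon v^{-v}n^{v}$, and Erdős's theorem gives sets $X_i\subseteq W_i$ with $|X_i|=s$ such that every transversal $\{x_1,\dots,x_v\}$ with $x_i\in X_i$ is an edge of $\mathcal G$.

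Finally I check that this is a copy of $\mathcal F[s]$. Send the cluster of $u_i$ to $X_i$. For an edge $\{u_{i_1},\dots,u_{i_r}\}$ of $\mathcal F$ and any choice $x_{i_j}\in X_{i_j}$, extend these to a full transversal by picking arbitrary vertices in the remaining $X_i$. That transversal is an edge of $\mathcal G$, hence an embedding of $\mathcal F$, so $\{x_{i_1},\dots,x_{i_r}\}\in\mathcal H$. The sets $X_i$ are pairwise disjoint, so this is a genuine embedding of $\mathcal F[s]$.

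The main obstacle is the partite Erdős theorem itself. It is classical, so I would cite it rather than reprove it. The only points needing care are the passage from labelled copies to a partite hypergraph, done above, and ensuring that ``$\Omega$'' constants remain fixed while $n\to\infty$.
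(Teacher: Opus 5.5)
Your argument is correct and is essentially the classical proof behind the paper's citation; the paper itself only cites Erdős and Erdős--Simonovits and gives no proof. That classical proof likewise encodes copies of $\mathcal F$ as edges of an auxiliary $v(\mathcal F)$-uniform hypergraph and applies Erdős's theorem on complete $v$-partite $v$-graphs, and your random partition is a clean way to handle the labelling.

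One small point should be made explicit: a copy of $K^{(v)}_{s,\dots,s}$ in the $v$-partite $\mathcal G$ automatically has each of its parts inside a single class $W_i$. Two vertices of one part lying in different classes would force a common transversal completion to occupy two different sets of classes. Alternatively, you can simply invoke the partite form of Erdős's theorem.
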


\begin{lemma}[\cite{KomlosSimonovits1996,Gowers2007,NagleRodlSchacht2006,RodlSkokan2006,Tao2006}]\label{lem:removal}
Let $r\ge 2$ and let $\mathcal{F}$ be a fixed $r$-graph. For every $\eta>0$ there are constants $\xi>0$ and $n_0$ such that the following holds. If $n\ge n_0$ and an $n$-vertex $r$-graph $\mathcal{H}$ contains at most $\xi n^{v(\mathcal{F})}$ copies of $\mathcal{F}$, then deleting at most
$\eta n^r$ edges from $\mathcal{H}$ makes it $\mathcal{F}$-free.
\end{lemma}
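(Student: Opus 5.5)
The statement is the hypergraph removal lemma, and I would not reprove it from scratch. The plan is to reduce it to the hypergraph regularity and counting lemmas in the form of R\"odl--Skokan and Nagle--R\"odl--Schacht, or Gowers. For $r=2$ the classical Szemer\'edi regularity lemma together with the graph counting lemma suffices, as in Koml\'os--Simonovits. Throughout, set $f:=v(\mathcal F)$. We argue by contraposition: if every set of at most $\eta n^r$ edges leaves a copy of $\mathcal F$ after deletion, then $\mathcal H$ contains at least $\xi n^{f}$ copies.

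First, apply the hypergraph regularity lemma to $\mathcal H$ with parameters chosen in terms of $\eta$ and $\mathcal F$. This gives a bounded-complexity family of partitions: a vertex partition into $t_1$ classes, and for each $2\le k\le r-1$ partitions of the $k$-sets into polyads. Relative to these, all but an $\eta/4$-fraction of the $r$-sets lie on polyads where $\mathcal H$ is regular, in the appropriate sense, with respect to the underlying $(r-1)$-complex.

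Second, clean $\mathcal H$ by deleting three kinds of edges:
\begin{itemize}
\item edges lying on irregular polyads;
\item edges on polyads where the relative density of $\mathcal H$ is below some threshold $d_0$;
\item edges meeting a vertex class or lower-level cell that is too small.
\end{itemize}
With the parameters chosen suitably, the total number of deleted edges is at most $\eta n^r$. By hypothesis the cleaned hypergraph $\mathcal H'$ still contains a copy of $\mathcal F$. Every edge of this copy lies on a regular polyad of density at least $d_0$.

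Third, and this is the crucial step, apply the counting lemma to the polyads supporting the edges of this copy. It shows that the number of copies of $\mathcal F$ in $\mathcal H'\subseteq\mathcal H$ living on the same cells is at least $(1-o(1))\,d_0^{|\mathcal F|}\prod(\text{lower-level densities})\,n^{f}$. Since all densities and the complexity are bounded below in terms of $\eta$ and $\mathcal F$ alone, this quantity is at least $\xi n^{f}$ for a suitable $\xi=\xi(\eta,\mathcal F)>0$ and all $n\ge n_0$, which is the required contradiction.

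The main obstacle is this counting step for $r\ge3$. The regularity at the top level must be strong enough relative to the lower-level densities, whose regularity parameters may depend on the densities. This is exactly the reason for the hierarchy of constants in the regularity lemmas of R\"odl--Skokan and Gowers. I would simply invoke their regularity and counting lemmas as a black box, as in Tao's formulation, rather than re-derive them.
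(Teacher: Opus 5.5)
Your proposal is correct and consistent with the paper, which gives no proof of this lemma and simply quotes the (hypergraph) removal lemma from the cited works; your regularity--cleaning--counting outline, with the regularity and counting lemmas used as a black box, is exactly how those references prove it. One routine detail to add is that the cleaning step must also discard the at most $O(n^r/t_1)$ edges that are not crossing with respect to the vertex partition, because such edges lie on no polyad and the counting lemma cannot be applied to them.
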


We now derive the Erd\H{o}s--Simonovits-type stability consequence needed
below. The Mantel theorem
and its generalization by Mubayi and Pikhurko say that the fan $\mathcal F_r$
has the balanced complete $r$-partite $r$-graph as its Tur\'an extremal
construction, and the required stability theorem is classical for
$r=2$~\cite{ErdosSimonovits1966} and due to Mubayi and Pikhurko for
$r\ge3$~\cite{MubayiPikhurko2007}.

\begin{lemma}\label{thm:weak-stability}
Fix $r\ge 2$ and $t\ge r$. For every $\delta>0$ there are constants $\eps>0$ and $n_0$ such that every $\mathcal{F}_{r,t}$-free $r$-graph $\mathcal{H}$ on $n\ge n_0$ vertices with $|\mathcal{H}|\ge t_r^{(r)}(n)-\eps n^r$
is $\delta n^r$-close to a copy of $\mathcal{T}_r^{(r)}(n)$.
\end{lemma}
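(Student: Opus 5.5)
The plan is the standard removal-plus-blowup route, reducing to the stability theorem for the single fan $\mathcal F_r$. The key observation is that $\mathcal F_{r,t}$ is contained in the blowup $\mathcal F_r[t]$. Indeed, by the Remark after Definition~\ref{def:Frt}, $\mathcal F_{r,t}$ is obtained from $\mathcal F_r[t]$ by keeping the complete $r$-partite $r$-graph on the transversal clusters $T_1,\ldots,T_r$ and only the original petals of each copy. Hence every copy of $\mathcal F_r[t]$ contains a copy of $\mathcal F_{r,t}$. So if $\mathcal H$ is $\mathcal F_{r,t}$-free, then $\mathcal H$ contains no $\mathcal F_r[t]$. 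By Lemma~\ref{lem:blowup} with $s=t$, this forces $\mathcal H$ to contain $o(n^{v(\mathcal F_r)})$ copies of $\mathcal F_r$. More precisely: for every $\xi>0$ and $n$ large, $\mathcal H$ has at most $\xi n^{v(\mathcal F_r)}$ copies, since otherwise Lemma~\ref{lem:blowup} (with the implicit constant $\xi$) produces $\mathcal F_r[t]\supseteq\mathcal F_{r,t}$.

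Next, given $\delta>0$, I would invoke the stability theorem for the fan: Erd\H{o}s--Simonovits~\cite{ErdosSimonovits1966} for $r=2$, and Mubayi--Pikhurko~\cite{MubayiPikhurko2007} for $r\ge3$. It supplies $\eps_1>0$ such that every $\mathcal F_r$-free $n$-vertex $r$-graph with at least $t_r^{(r)}(n)-\eps_1 n^r$ edges is $(\delta/2) n^r$-close to $\mathcal T_r^{(r)}(n)$. Set $\eta:=\min\{\eps_1/2,\delta/2\}$ and let $\xi$ be given by Lemma~\ref{lem:removal} for $\mathcal F_r$ and this $\eta$. Finally take $\eps:=\eps_1/2$.

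Now let $\mathcal H$ be $\mathcal F_{r,t}$-free with $|\mathcal H|\ge t_r^{(r)}(n)-\eps n^r$. By the first paragraph, for $n$ large $\mathcal H$ has at most $\xi n^{v(\mathcal F_r)}$ copies of $\mathcal F_r$. By Lemma~\ref{lem:removal}, deleting at most $\eta n^r$ edges yields an $\mathcal F_r$-free $\mathcal H'$ with
\[
|\mathcal H'|\ge t_r^{(r)}(n)-(\eps+\eta)n^r\ge t_r^{(r)}(n)-\eps_1 n^r .
\]
Stability then makes $\mathcal H'$ $(\delta/2)n^r$-close to $\mathcal T_r^{(r)}(n)$. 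Adding back the deleted edges shows that $\mathcal H$ is $(\delta/2+\eta)n^r\le\delta n^r$-close to it.

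The only point needing care is the first step, namely the containment $\mathcal F_{r,t}\subseteq\mathcal F_r[t]$ and the use of Lemma~\ref{lem:blowup} in contrapositive form with a quantified constant. The containment is immediate from the definition, because every edge of $\mathcal F_{r,t}$ is a transversal of clusters forming an edge of $\mathcal F_r$. Everything else is bookkeeping of constants.
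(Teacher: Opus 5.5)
Your proposal is correct and follows the paper's proof essentially verbatim: the containment $\mathcal F_{r,t}\subseteq\mathcal F_r[t]$ with Lemma~\ref{lem:blowup} gives few copies of $\mathcal F_r$, then the removal lemma, fan stability, and adding back the deleted edges. Your constants ($\eps_1,\eta,\eps$) are exactly the paper's $\eta,\tau,\eps$ under different names.
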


\begin{proof}
By the stability theorem of Erd\H{o}s--Simonovits for $r=2$ and Mubayi--Pikhurko
for $r\ge3$, choose $\eta>0$ such that every $\mathcal F_r$-free $r$-graph
$\mathcal H'$ on $n$ sufficiently large vertices with
$|\mathcal H'|\ge t_r^{(r)}(n)-\eta n^r$ is $\frac{\delta}{2}n^r$-close to a
copy of $\mathcal T_r^{(r)}(n)$. Put $\tau:=\min\{\eta/2,\delta/2\}$ and $\eps:=\eta/2$.
Let $\mathcal{H}$ be an $\mathcal{F}_{r,t}$-free $r$-graph with $|\mathcal{H}|\ge t_r^{(r)}(n)-\eps n^r$. Since
$\mathcal{F}_{r,t}\subseteq \mathcal{F}_r[t]$, Lemma~\ref{lem:blowup} implies that $\mathcal{H}$ contains
$o(n^{v(\mathcal{F}_r)})$ copies of $\mathcal{F}_r$. Otherwise, $\mathcal{H}$ would contain $\mathcal{F}_r[t]$, and then 
$\mathcal{F}_{r,t}\subseteq \mathcal{H}$. By Lemma~\ref{lem:removal}, deleting at most $\tau n^r$ edges gives an $\mathcal{F}_r$-free $r$-graph $\mathcal{H}_0$. Moreover,
\[
        |\mathcal{H}_0|\ge t_r^{(r)}(n)-(
        \eps+\tau)n^r\ge t_r^{(r)}(n)-\eta n^r.
\]
The stability theorem gives a copy of $\mathcal{T}_r^{(r)}(n)$ on
$V(\mathcal{H})$ such that $|\mathcal{H}_0\triangle \mathcal{T}_r^{(r)}(n)|\le \frac{\delta}{2}n^r$. Adding back the deleted edges changes the symmetric difference by at most $\tau n^r$, and so
\[
        |\mathcal{H}\triangle \mathcal{T}_r^{(r)}(n)|
        \le \tau n^r+\frac{\delta}{2}n^r\le \delta n^r.
\]
Thus, $\mathcal{H}$ is $\delta n^r$-close to the complete balanced $r$-partite $r$-graph $\mathcal{T}_r^{(r)}(n)$.
\end{proof}

We next dispose of the graph case, so that the numbered subsections below may
focus on $r\ge3$. When $r=2$, the graph
$\mathcal F_{2,t}$ is $3$-chromatic. Its decomposition family in
Simonovits' theorem~\cite{Simonovits1968} has the unique inclusion-minimal member $M_t$, the matching of size $t$. Indeed, deleting any one color class from a proper $3$-coloring leaves at least one edge from each of the $t$ vertex-disjoint triangles, and hence leaves a graph containing a matching of size $t$. Conversely, there is a proper coloring in which the deleted color class is
$\{t_1^i:i\in[t]\}$; after deleting this class, the remaining graph is exactly
the matching $\{c^i t_2^i:i\in[t]\}$ together with isolated vertices. Hence the
unique minimal member of the decomposition family of $\mathcal{F}_{2,t}$ is the matching
$M_t$.

Simonovits' exact theorem for $3$-chromatic graphs with finite decomposition
family~\cite{Simonovits1968} reduces the large-$n$ extremal problem to adding
internal graphs to the two classes of a complete bipartite graph, with total
matching number at most $t-1$. Indeed, if the two internal graphs contained
disjoint matchings of total size $t$, then these matching edges, together with
the complete bipartite graph between the two classes, would realize
$\mathcal F_{2,t}$. The Erd\H{o}s--Gallai matching theorem
\cite{ErdosGallai1959}, together with its equality case for fixed matching
number and large side size, implies that the internal edges are maximized by
making them incident with a fixed set of $t-1$ vertices, possibly distributed
between the two sides. These $t-1$ vertices are full-degree vertices, and
optimizing the remaining bipartition leaves the two non-exceptional parts
balanced. Equivalently, the resulting graph is
\[
        K^{(2)}_{t-1}\vee \mathcal{T}_2^{(2)}(n-t+1)=\mathcal{J}_{t-1}(n).
\]
Thus Theorem~\ref{thm:exact} holds for $r=2$.

It remains to prove the exact theorem for $r\ge 3$. From now on, all constants
in this section may depend on the fixed integers $r$ and $t$, and $a\ll b$
denotes the usual hierarchy of small constants.

\subsection{Partition from the stability}

We begin the hypergraph exact proof by fixing a near-Tur\'an partition of an
extremal $\mathcal F_{r,t}$-free graph. The notation introduced here separates
missing crossing edges from bad non-crossing edges and records the first
quantitative consequences of stability.

We first check the proposed extremal construction itself and record the
edge-number gap gained by adding one more exceptional vertex.

\begin{lemma}\label{lem:deterministic}
The $r$-graph $\mathcal J_{t-1}(n)$ is $\mathcal F_{r,t}$-free. Moreover,
there is a constant $c_{r,t}>0$ such that, for all sufficiently large $n$ and all
$0\le s\le t-2$,
\[
        |\mathcal J_{s+1}(n)|-|\mathcal J_s(n)|\ge c_{r,t}n^{r-1}.
\]
Consequently,
\[
        |\mathcal J_{t-1}(n)|-|\mathcal J_s(n)|
        \ge (t-1-s)c_{r,t}n^{r-1}
        \qquad\text{for}\quad 0\le s\le t-1.
\]
\end{lemma}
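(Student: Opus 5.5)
The lemma has two parts: the freeness of $\mathcal J_{t-1}(n)$ and the edge-count gap. I will prove them separately.

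\emph{Freeness.} I will reuse the argument already used in Lemma~\ref{lem:one-edge-active}. Suppose $\varphi$ embeds $\mathcal F_{r,t}$ into $\mathcal J_{t-1}(n)$. Each of the $t$ vertex-disjoint individual fans $\mathcal F_r^1,\dots,\mathcal F_r^t$ is mapped into $\mathcal J_{t-1}(n)$. The $r$-graph $\mathcal J_{t-1}(n)-S=\mathcal T_r^{(r)}(n-t+1)$ is $r$-partite, while $\mathcal F_r$ is not $r$-partite, as remarked after Definition~\ref{def:Frt}. Hence every $\varphi(V(\mathcal F_r^i))$ meets $S$. These $t$ images are pairwise disjoint, so they would need $t$ distinct vertices of $S$. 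This contradicts $|S|=t-1$, so $\mathcal J_{t-1}(n)$ is $\mathcal F_{r,t}$-free. This part uses only the fans, not the blowup transversal edges.

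\emph{Gap.} I will pass from $\mathcal J_s(n)$ to $\mathcal J_{s+1}(n)$ by promoting one partite vertex. Take $\mathcal J_s(n)$ with partite parts $V_1,\dots,V_r$ of sizes differing by at most one. Choose a vertex $v$ in a largest part, say $V_1$, and make it exceptional. The parts of the remaining partite vertices stay balanced, since $V_1\setminus\{v\}$ was a largest part, so the result is exactly $\mathcal J_{s+1}(n)$. The only edges that change are those containing $v$. In $\mathcal J_s(n)$ these are the $r$-sets through $v$ that meet $S$, together with the crossing $r$-sets through $v$; in $\mathcal J_{s+1}(n)$ every $r$-set through $v$ is an edge. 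Therefore
\[
        |\mathcal J_{s+1}(n)|-|\mathcal J_s(n)|
        =
        \binom{n-1-s}{r-1}-\prod_{i=2}^{r}|V_i|,
\]
where the first term counts the $(r-1)$-subsets of the partite vertices other than $v$, and the product counts those forming a crossing edge with $v$.

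With $m:=n-1-s$ partite vertices besides $v$, we have $|V_i|=n/r+O_{r,t}(1)$. This gives
\[
        \binom{m}{r-1}=\frac{n^{r-1}}{(r-1)!}+O_{r,t}(n^{r-2})
        \quad\text{and}\quad
        \prod_{i=2}^r|V_i|=\frac{n^{r-1}}{r^{r-1}}+O_{r,t}(n^{r-2}).
\]
Since $r\ge2$, we have $r^{r-1}>(r-1)!$. Indeed, for $r=2$ this reads $2>1$, and for larger $r$ one has $r^{r-1}\ge r^{r-2}\cdot r>(r-1)!$. So the difference is at least $\bigl(\tfrac1{(r-1)!}-\tfrac1{r^{r-1}}\bigr)n^{r-1}/2=:c_{r,t}n^{r-1}$ for all large $n$, uniformly over the finitely many $s\le t-2$. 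The consequence stated in the lemma then follows by telescoping over $s,s+1,\dots,t-2$.

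The only point needing care is the bookkeeping that promoting a vertex from a largest part again gives the balanced construction $\mathcal J_{s+1}(n)$, together with the exact count of edges through $v$. Neither step is a genuine obstacle.
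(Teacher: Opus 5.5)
Your proof is correct and follows the paper's argument. Freeness comes from the $t$ disjoint non-$r$-partite fans each needing a vertex of $S$. The gap equals $\binom{n-s-1}{r-1}$ minus the crossing degree of a vertex promoted from a largest part, which is exactly the paper's $\binom{m-1}{r-1}-\bigl(t_r^{(r)}(m)-t_r^{(r)}(m-1)\bigr)$ written combinatorially. Your positivity check $r^{r-1}>(r-1)!$ also covers $r=2$, whereas the paper only remarks on $r\ge3$.
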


\begin{proof}
Let $S$ be the set of the $t-1$ full-degree vertices of
$\mathcal J_{t-1}(n)$. The subgraph $\mathcal J_{t-1}(n)-S$ is $r$-partite,
and contains no copy of $\mathcal F_r$: the transversal edge of
$\mathcal F_r$ forces the transversal vertices into distinct parts, after
which the center cannot be placed so that all petals are crossing. Hence each
of the $t$ vertex-disjoint copies of $\mathcal F_r$ inside
$\mathcal F_{r,t}$ must meet $S$, which is impossible because $|S|=t-1$.

It remains to prove the gap estimate. Put $m=n-s$. Since
\[
        |\mathcal J_s(n)|
        =
        \binom nr-\binom mr+t_r^{(r)}(m),
\]
we have
\[
        |\mathcal J_{s+1}(n)|-|\mathcal J_s(n)|
        =
        \binom{m-1}{r-1}
        -
        \bigl(t_r^{(r)}(m)-t_r^{(r)}(m-1)\bigr).
\]
Also
\[
        t_r^{(r)}(m)-t_r^{(r)}(m-1)
        =
        r^{-(r-1)}m^{r-1}+O_r(m^{r-2}).
\]
Consequently,
\[
        |\mathcal J_{s+1}(n)|-|\mathcal J_s(n)|
        =
        \left(\frac1{(r-1)!}-\frac1{r^{r-1}}+o(1)\right)m^{r-1}.
\]
The coefficient is positive for $r\ge3$, and $m=n-O_{r,t}(1)$, so the first
inequality follows by choosing $c_{r,t}>0$ sufficiently small. Summing the first inequality for $s'=s,s+1,\ldots,t-2$ gives the displayed consequence.
\end{proof}

Let $0<\alpha<1$ be fixed later and let $\mathcal{H}$ be an extremal
$\mathcal{F}_{r,t}$-free $r$-graph on $n$ vertices. Since
\[
        |\mathcal{H}|\ge |\mathcal{J}_{t-1}(n)|
        =t_r^{(r)}(n)+\Theta_{r,t}(n^{r-1}),
\]
Lemma~\ref{thm:weak-stability} gives
that $\mathcal{H}$ is $(\alpha/r)n^r$-close to a copy of $\mathcal{T}_r^{(r)}(n)$. Let $\sigma=(V_1,\dots,V_r)$ be an $r$-partition maximizing
\[
        f_{\mathcal{H}}(\sigma):=\sum_{e\in \mathcal{H}}
        |\{i\in [r]:e\cap V_i\ne\varnothing\}|.
\]
Let $\mathcal{T}_\sigma$ be the complete $r$-partite $r$-graph on $V(\mathcal{H})$ with parts
$V_1,\dots,V_r$. We call the edges in $\mathcal{T}_\sigma\setminus \mathcal{H}$ \emph{missing},
and the edges in $\mathcal{H}\setminus \mathcal{T}_\sigma$ \emph{bad}. If $\tau$ is the
$r$-partition given by the above copy of $\mathcal{T}_r^{(r)}(n)$, then by the maximality
of $\sigma$,
\[
        r\left(|\mathcal{H}|-\frac{\alpha}{r}n^r\right)
        \le f_{\mathcal{H}}(\tau)\le f_{\mathcal{H}}(\sigma)
        \le r|\mathcal{H}|-|\mathcal{H}\setminus \mathcal{T}_\sigma|,
\]
which yields $|\mathcal{H}\setminus \mathcal{T}_\sigma|\le \alpha n^r$.
On the other hand, since $\mathcal{T}_\sigma$ is $\mathcal{F}_{r,t}$-free, the extremality of $\mathcal{H}$
gives $|\mathcal{H}|\ge |\mathcal{T}_\sigma|$. Consequently,
\[
        |\mathcal{T}_\sigma\setminus \mathcal{H}|
        =|\mathcal{T}_\sigma|-|\mathcal{H}|+|\mathcal{H}\setminus \mathcal{T}_\sigma|
        \le |\mathcal{H}\setminus \mathcal{T}_\sigma|
        \le \alpha n^r.
\]
Moreover, since 
$|\mathcal{T}_\sigma|\ge |\mathcal{H}|-|\mathcal{H}\setminus \mathcal{T}_\sigma|$, a standard estimate shows
that, for each $i\in [r]$,
\begin{equation}\label{eq:part-balance}
    \frac{n}{r}-\alpha^{1/r}n
        \le |V_i|\le
        \frac{n}{r}+\alpha^{1/r}n.
\end{equation}

We write $d_{\mathcal H}(U)$ for the number of edges of $\mathcal H$
containing a set $U$, and write $\delta_1(\mathcal H)$ for the minimum
vertex degree of $\mathcal H$. For a vertex $v$, let $d_B(v)$ and $d_M(v)$ denote, respectively, the number
of bad edges and missing crossing edges containing $v$. We use the analogous
notation $d_B(u,v)$, $d_M(u,v)$ and $d_B(u,v,w)$ for pairs and triples.

Let $h:=v(\mathcal F_r)$. Set
\[
        k_{\rm aux}:=1000trh,
        \qquad\text{and}\qquad d:=10k_{\rm aux}\binom{n}{r-3}.
\]
The constant $k_{\rm aux}$ bounds all auxiliary forbidden sets used in the fan-completion arguments. A pair $\{u,v\}$ whose vertices lie in distinct parts is called a \emph{cross-pair}, and it is \emph{sparse} if $d_{\mathcal{H}}(u,v)\le d$. Define
\[
        L:=\{v: v\text{ is contained in at least }
        \alpha^{1/3}n\text{ sparse pairs}\}.
\]

Sparse cross-pairs are pairs that cannot be used freely in the fan-completion
argument. The following bound shows that they are rare, and therefore that the
vertices incident with many of them form a small exceptional set.

\begin{lemma}\label{lem:sparse}
The number of sparse pairs is at most $b_{r,t}\alpha n^2$, where $b_{r,t}>0$ depends only on $r$ and $t$. In particular,
$|L|\le b_{r,t}\alpha^{2/3}n$.
\end{lemma}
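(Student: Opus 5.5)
The plan is a double count of pairs $(\{u,v\},e)$ with $\{u,v\}\subseteq e$, comparing edges of $\mathcal T_\sigma$ with missing edges. Every sparse pair lies in many crossing $r$-sets of $\mathcal T_\sigma$, but in few edges of $\mathcal H$.

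Fix a sparse cross-pair $\{u,v\}$ with $u\in V_a$ and $v\in V_b$, $a\ne b$. The crossing $r$-sets of $\mathcal T_\sigma$ containing $\{u,v\}$ are obtained by choosing one vertex from each of the other $r-2$ parts. By \eqref{eq:part-balance}, once $\alpha$ is small there are at least
\[
\prod_{i\ne a,b}|V_i|\ge \Bigl(\frac nr-\alpha^{1/r}n\Bigr)^{r-2}\ge \frac{n^{r-2}}{(2r)^{r-2}}
\]
such $r$-sets. Since $\{u,v\}$ is sparse, at most $d=10k_{\rm aux}\binom{n}{r-3}=O_{r,t}(n^{r-3})$ of these $r$-sets lie in $\mathcal H$. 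For $n$ sufficiently large this gives
\[
d_M(u,v)\ge \frac{n^{r-2}}{2(2r)^{r-2}}.
\]

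Let $P$ be the number of sparse pairs. Summing over all sparse pairs, every missing crossing edge is counted at most $\binom r2$ times. Using $|\mathcal T_\sigma\setminus\mathcal H|\le\alpha n^r$, established just before the lemma, we get
\[
P\cdot\frac{n^{r-2}}{2(2r)^{r-2}}\le \binom r2\,\alpha n^r .
\]
Hence $P\le b_{r,t}\alpha n^2$ with $b_{r,t}:=r^2(2r)^{r-2}$. This constant does not even depend on $t$; any larger constant also works.

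For the bound on $L$, note that each vertex of $L$ lies in at least $\alpha^{1/3}n$ sparse pairs, and each sparse pair is counted at its two endpoints. Therefore
\[
|L|\,\alpha^{1/3}n\le 2P\le 2b_{r,t}\alpha n^2,
\]
so $|L|\le 2b_{r,t}\alpha^{2/3}n$. Replacing $b_{r,t}$ by $2b_{r,t}$ at the outset gives exactly the stated form.

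The only delicate point is checking that $d=o(n^{r-2})$, which holds because $r\ge3$ here, together with the lower bound on the part sizes coming from \eqref{eq:part-balance}. No other step poses a real obstacle.
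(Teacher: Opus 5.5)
Your proposal is correct and is essentially the paper's argument. A sparse cross-pair lies in $\Omega_r(n^{r-2})$ crossing $r$-sets of $\mathcal T_\sigma$, of which only $O_{r,t}(n^{r-3})$ are present, so it lies in $\Omega_r(n^{r-2})$ missing crossing edges; double counting against $|\mathcal T_\sigma\setminus\mathcal H|\le\alpha n^r$, with each missing edge containing $\binom r2$ cross-pairs, followed by the incidence count for $L$, gives both bounds. Your enlargement of $b_{r,t}$ by a factor $2$ is also right: the paper's own computation literally yields $|L|\le 2b_{r,t}\alpha^{2/3}n$ rather than the stated form, a point its proof glosses over.
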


\begin{proof}
By the part-size estimate \eqref{eq:part-balance}, every cross-pair is contained in at least
$c_r n^{r-2}$ crossing $r$-sets of $\mathcal{T}_\sigma$, for some $c_r>0$ and all sufficiently large $n$. If a cross-pair is sparse, then only $O_{r,t}(n^{r-3})$ of these crossing sets are present in $\mathcal{H}$. Hence, for all large $n$, the pair
is contained in at least $(c_r/2)n^{r-2}$ missing crossing edges. Since a missing
edge contains at most $\binom{r}{2}$ cross-pairs, and $|\mathcal{T}_\sigma\setminus \mathcal{H}|\le \alpha n^r$, the number of sparse pairs is at most $b_{r,t}\alpha n^2$.
Since every vertex of $L$ is incident with at least $\alpha^{1/3}n$ sparse pairs, 
counting incidences between vertices of $L$ and the sparse pairs gives
$|L|\alpha^{1/3}n\le 2b_{r,t}\alpha n^2$, and hence the stated bound.
\end{proof}

\subsection{Completion of one fan}

The next tool is a local fan-completion lemma. It says that once a transversal
edge and one suitable petal are available, sufficiently many pair-degrees allow
us to greedily complete the remaining petals while avoiding a bounded forbidden
set.

\begin{lemma}\label{lem:fan-completion}
Fix $R\subseteq V(\mathcal{H})$ with $|R|\le k_{\rm aux}$, and fix
$z\in V_i\setminus R$. Let $T=\{x_1, \dots,x_r\}\in \mathcal{H}-R$, and $x_j\in V_j$ with $x_i\ne z$. Suppose that one of the following holds:
\begin{enumerate}[label=(\alph*)]
\item there is a bad edge $P_i\in \mathcal{H}-R$ containing $\{z,x_i\}$ and
satisfying $P_i\cap T=\{x_i\}$;
\item $d_B(z,x_i)\ge \gamma_0 n^{r-2}$ for some fixed $\gamma_0>0$.
\end{enumerate}
Assume also that $d_{\mathcal{H}}(z,x_j)>d$ for every $j\ne i$. Then $\mathcal{H}-R$ contains a copy of $\mathcal{F}_r$ with center $z$, and transversal edge $T$, and in case $(a)$, its $i$th petal is the prescribed edge $P_i$.
\end{lemma}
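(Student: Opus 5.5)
The plan is a greedy construction of the remaining $r-1$ petals $P_j=\{z,x_j\}\cup A_j$, $j\ne i$, one at a time, together with the choice of $P_i$ in case (b). Throughout, the forbidden set is kept bounded.

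We maintain a set $R'$ of used vertices. Initially
\[
        R'=R\cup T\cup\{z\},
\]
together with $P_i$ in case (a). At every stage $|R'|\le k_{\rm aux}+O(r^2)$. The fan needs the petals pairwise disjoint outside $z$, so each new petal must avoid $R'$ apart from $z$ and its own $x_j$.

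We first handle the $i$th petal in case (b). We need a bad edge $P_i\supseteq\{z,x_i\}$ with $P_i\cap T=\{x_i\}$ that avoids $R$. The edges containing $\{z,x_i\}$ that also meet a further vertex of $R\cup T$ number at most
\[
        (k_{\rm aux}+r)\binom{n}{r-3}=O(n^{r-3}).
\]
This is less than $\gamma_0 n^{r-2}$ for large $n$, so a suitable $P_i$ exists. Once chosen, we add it to $R'$. In case (a), $P_i$ is given and we simply add it to $R'$.

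Next, for each $j\ne i$ in turn, we pick an edge of $\mathcal H$ containing $\{z,x_j\}$ whose other $r-2$ vertices avoid $R'$. The edges containing $\{z,x_j\}$ and some further vertex $w\in R'\setminus\{z,x_j\}$ number at most $|R'|\binom{n}{r-3}$. Because $k_{\rm aux}=1000trh$ dominates $|R'|$ throughout, this is at most
\[
        2k_{\rm aux}\binom{n}{r-3}<10k_{\rm aux}\binom{n}{r-3}=d<d_{\mathcal H}(z,x_j).
\]
Hence some edge $P_j\ni z,x_j$ avoids $R'$ otherwise. We set $A_j=P_j\setminus\{z,x_j\}$ and add it to $R'$.

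After all $r-1$ steps we have $r$ petals sharing only $z$, each meeting $T$ exactly in its own $x_j$, and $z\notin T$. So $T$ together with the petals forms $\mathcal F_r$ with center $z$ inside $\mathcal H-R$. Since $r\ge3$, every petal has $r-2\ge1$ extra vertices, which matches Definition~\ref{def:fan}.

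The step needing the most care is the bookkeeping of the forbidden set. Two things must be checked: that $|R'|$ stays below the threshold hidden in $d$, and that in case (a) the given $P_i$ is disjoint from $T\setminus\{x_i\}$ and from $R$. The latter is guaranteed by the hypotheses $P_i\in\mathcal H-R$ and $P_i\cap T=\{x_i\}$. Nothing else is delicate.
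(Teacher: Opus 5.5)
Your proposal is correct and follows essentially the same route as the paper. In case (b) you pick $P_i$ by noting that only $O(n^{r-3})$ bad edges through $\{z,x_i\}$ meet $(R\cup T)\setminus\{z,x_i\}$, and then you greedily add the petals through each $\{z,x_j\}$, using $|R'|<2k_{\rm aux}$ so that fewer than $d$ edges through the pair are blocked.
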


\begin{proof}
In case (a), keep the edge $P_i$ fixed. In case (b), only $O_{r,t}(n^{r-3})$ bad edges
through $\{z,x_i\}$ meet $(R\cup T)\setminus \{z,x_i\}$. Since $\gamma_0$ is fixed, for all sufficiently large $n$ there is a bad edge $P_i$ through $\{z,x_i\}$ such that $P_i\cap(R\cup T)=\{x_i\}$. Thus in either case we have a
fixed petal $P_i\in \mathcal{H}-R$ containing $\{z,x_i\}$ and meeting $T$ exactly
in $x_i$. Now we choose the remaining petals greedily. Suppose petals have already been chosen for the coordinates in $I\subseteq [r]\setminus\{i\}$, and let $R'$ be the union of
$R$, $T$, $P_i$, and the petals already chosen. Then $|R'|\le k_{\rm aux}+rh+r<2k_{\rm aux}$. For the next coordinate $j\notin I\cup\{i\}$, the pair $\{z,x_j\}$ lies in more than
$d=10k_{\rm aux}\binom{n}{r-3}$ edges of $\mathcal{H}$, while fewer than $|R'|\binom{n}{r-3}<d$ of these edges meet $R'\setminus\{z,x_j\}$. Hence, we can
choose a new petal through $\{z,x_j\}$ that is disjoint from all previous choices outside $z$.
Repeating this for every remaining coordinate completes the desired fan with transversal edge $T$.
\end{proof}

The next proposition is the main local consequence of a large bad degree. It
associates to such a vertex a small seed set and large candidate sets, with the
property that every transversal edge through the candidate sets can be completed
to a fan avoiding any bounded forbidden set.

\begin{prop}\label{prop:heavy-package}
For every fixed $\mu>0$ there exist constants
$\gamma_h(\mu)>0$ and $\alpha_h(\mu)>0$ such that the following holds. Suppose $0<\alpha\le \alpha_h(\mu)$, let $n$ be sufficiently large, and let $v\in V_i$ satisfy $d_B(v)\ge \mu n^{r-1}$.
Then, for every $R\subseteq V(\mathcal H)\setminus\{v\}$ with
$|R|\le k_{\rm aux}$, one can find a set $C=C(v;R)$ and sets $X_1=X_1(v;R),\ldots,X_r=X_r(v;R)$ such that $v\in C\subseteq V(\mathcal H)\setminus R$, $|C|\le 2$ and $X_j\subseteq V_j\setminus (R\cup C)$, $|X_j|\ge \gamma_h(\mu) n$ for every $j \in [r]$. 
Moreover, these sets are robust in the following sense. For every
$U\subseteq V(\mathcal H)\setminus C$ with $|R\cup U|\le k_{\rm aux}$,
every edge $T=\{x_1,\ldots,x_r\}\in \mathcal H-(R\cup U)$ satisfying
$x_j\in X_j\setminus U$ for every $j\in[r]$
extends in $\mathcal H-(R\cup U)$ to a copy of $\mathcal F_r$ that
contains $C$ and has $T$ as its transversal edge.
\end{prop}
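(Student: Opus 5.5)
The plan is to read off, from the many bad edges at $v$, a fixed ``petal germ'' that must lie in every fan we build. This germ is either the center alone ($C=\{v\}$) or a center together with one petal vertex ($C=\{v,w\}$). Every other petal is then completed greedily by Lemma~\ref{lem:fan-completion}. The sets $X_j$ are defined by codegree conditions with thresholds of order $n^{r-2}$ or $n^{r-3}$. A bounded forbidden set $R\cup U$ destroys only $O(k_{\rm aux}n^{r-3})$, respectively $O(k_{\rm aux}n^{r-4})$, of the relevant edges, so these conditions survive. Throughout we take $\alpha\le\alpha_h(\mu)\ll\gamma_h(\mu)\ll\mu$.

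\emph{Step 1 (pigeonhole on bad edges).} Every bad edge through $v$ is non-crossing, so it has two vertices in a common part. Since $d_B(v)\ge\mu n^{r-1}$, pigeonholing gives one of two cases, each holding for at least $(\mu/2r^2)n^{r-1}$ bad edges through $v$.
\begin{itemize}
\item Case A: the repeated part is $V_i$ and $v$ is one of the repeated vertices.
\item Case B: some part $V_j$ (possibly $j=i$) contains two vertices $w,u$ of the edge, both different from $v$.
\end{itemize}

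\emph{Step 2 (Case A).} Take $C=\{v\}$, with $v$ as the center and $X_i:=\{x\in V_i\setminus R:\ d_B(v,x)\ge\gamma n^{r-2}\}$. Averaging gives $|X_i|\ge\gamma' n$. For $k\ne i$, let $X_k$ be the set of $x\in V_k\setminus R$ with $d_{\mathcal H}(v,x)>2d$.

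To see that $X_k$ is linear in size, use two facts about $v$. First, $\mathcal H$ is extremal, so a standard vertex-replacement argument gives $\delta_1(\mathcal H)\ge\binom{n-1}{r-1}-\binom{n-t}{r-1}+\bigl(\tfrac{n}{r}\bigr)^{r-1}-O(n^{r-2})$, i.e.\ essentially the crossing degree. Second, $\sigma$ maximizes $f_{\mathcal H}$, so moving $v$ from $V_i$ to $V_k$ cannot increase $f_{\mathcal H}$. Together these should force $v$ to lie in $\Omega(n^{r-1})$ edges meeting $V_k$ outside $V_i$. Hence at least $\gamma n$ vertices $x\in V_k$ satisfy $d_{\mathcal H}(v,x)>2d$, since $d=O(n^{r-3})$.

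Robustness then follows directly. Given $U$ and an edge $T\in\mathcal H-(R\cup U)$ with $x_j\in X_j\setminus U$, apply Lemma~\ref{lem:fan-completion}(b) with forbidden set $R\cup U$, center $z=v$, and $\gamma_0=\gamma$. The lemma counts codegrees in $\mathcal H$, and vertices of $R\cup U$ are avoided inside its greedy step. This gives a fan with center $v$ and transversal edge $T$.

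\emph{Step 3 (Case B).} Averaging over the repeated part $V_j$ produces a vertex $w\in V_j\setminus(R\cup L)$, $w\ne v$, with $d_B(v,w)\ge\gamma n^{r-2}$. Moreover, for at least $\gamma n$ vertices $u\in V_j$ the triple $\{v,w,u\}$ lies in at least $\gamma n^{r-3}$ bad edges. For $r=3$ read this as $\{v,w,u\}\in\mathcal H$ itself. Here $L$ is excluded using Lemma~\ref{lem:sparse}, since $|L|=O(\alpha^{2/3}n)\ll\mu n$.

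Put $C=\{v,w\}$ with $w$ as the center. Let $X_j$ be the set of these vertices $u$ that also satisfy $d_{\mathcal H}(w,u)>d$; since $w\notin L$, this removes only $\alpha^{1/3}n$ vertices. For $k\ne j$, let $X_k:=\{x\in V_k\setminus R: d_{\mathcal H}(w,x)>2d\}$, which has size at least $|V_k|-\alpha^{1/3}n$.

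Given $T$, the $j$th petal is first chosen as a bad edge containing $\{w,x_j,v\}$ that avoids $R\cup U\cup T$ apart from $x_j$. Such an edge exists because $\gamma n^{r-3}$ exceeds the $O(k_{\rm aux}n^{r-4})$ edges meeting the forbidden set (for $r=3$ the petal is the triple itself). Lemma~\ref{lem:fan-completion}(a) then completes the fan, and it contains $C$. If $j=i$ and $u$ could coincide with $v$, this is excluded by construction.

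\emph{Main obstacle.} The delicate step is the lower bound on $|X_k|$ in Case A. Here $v$ may lie in $L$, so no sparse-pair bound is available for $v$. The estimate must come from the minimum-degree property of extremal $\mathcal H$ combined with the maximality of $\sigma$. I would try first to show that if $v$ had $o(n)$ non-sparse partners in some $V_k$, then moving $v$ to $V_k$ (or replacing $v$ by a clone of a typical vertex) increases $f_{\mathcal H}$ or $|\mathcal H|$, a contradiction. If this fails, Case A should be rerouted into a Case B-type configuration with a typical center $w\in V_i\setminus L$ and $v$ as a petal vertex.
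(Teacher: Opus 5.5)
Your decomposition is the paper's own. It uses the same dichotomy at $v$: many bad edges containing $v$ and a second vertex of $V_i$, versus many with a repeated pair in another part. It has the same seeds $C=\{v\}$ and $C=\{v,w\}$ with $w\notin L\cup R$, the same candidate sets, and the same completion via Lemma~\ref{lem:fan-completion}. However, Case A is left unfinished, and the sketch you give for it relies partly on a claim that is not available here.

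\textbf{The minimum-degree bound cannot be used.} Vertex replacement does not preserve $\mathcal F_{r,t}$-freeness, because $\mathcal F_{r,t}$ does not cover pairs; for instance, centres of different fans lie in no common edge. The only minimum-degree bound in the paper, Proposition~\ref{prop:degree}, needs $\Delta_n\ge\Delta_{n-1}$. The proposition you are proving does not assume this, and Corollary~\ref{cor:heavy-bound} applies it without that hypothesis.

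\textbf{How to close Case A.} The move-$v$ idea you list first works on its own and needs only the Case A edges. Let $b_i(v)\ge(\mu/2r^2)n^{r-1}$ be the number of bad edges containing $v$ and another vertex of $V_i$. Suppose $|X_k|<\gamma n$ for some $k\ne i$. Then
\[
\sum_{x\in V_k}d_{\mathcal H}(v,x)\le |X_k|\,n^{r-2}+|R|\,n^{r-2}+2|V_k|\,d\le 2\gamma n^{r-1}
\]
for large $n$, since $d=O(n^{r-3})$. Now move $v$ from $V_i$ to $V_k$; this changes $f_{\mathcal H}$ only on edges through $v$.
\begin{itemize}
\item Such an edge gains a represented part exactly when it contains another vertex of $V_i$ and avoids $V_k$.
\item It loses one exactly when it meets $V_k$ and has no other vertex in $V_i$.
\end{itemize}
The losing edges number at most the displayed sum. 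Every Case A edge avoiding $V_k$ gains, so there are at least $b_i(v)-2\gamma n^{r-1}$ gaining edges. For $\gamma<\mu/(8r^2)$, $f_{\mathcal H}$ strictly increases, contradicting the choice of $\sigma$. Hence $v\in L$ is harmless and no rerouting is needed. This is exactly the paper's argument, which uses $b_1(v)\ge 10\kappa n^{r-1}$.

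\textbf{Smaller repairs in Case B.}
\begin{itemize}
\item Choose $w$ by averaging $\omega(w)=\sum_{u\in V_j}d_B(v,w,u)$, not merely $d_B(v,w)$. Only the former yields your ``moreover'' clause.
\item Use the threshold $d$ rather than $2d$ in $X_k$. The condition $w\notin L$ controls only pairs of codegree at most $d$, so with $2d$ the bound $|X_k|\ge|V_k|-\alpha^{1/3}n$ does not follow. Lemma~\ref{lem:fan-completion} needs only $>d$ anyway.
\item When $j\ne i$, remove $v$ from $X_i$. As you define it, $v\in X_i$, since $d_{\mathcal H}(w,v)\ge d_B(v,w)$ is large, and $U$ cannot delete $v$ because $U\cap C=\emptyset$. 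If $x_i=v$, no petal through $\{w,x_j,v\}$ meets $T$ only in $x_j$. The statement also requires $X_i\subseteq V_i\setminus(R\cup C)$.
\end{itemize}
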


\begin{proof}
Temporarily relabel the parts so that $v\in V_1$. For $j\in [r]$, put
\[
        b_j(v):=|\{e\in \mathcal{H}\setminus \mathcal{T}_\sigma:
        v\in e \text{ and }
        |e\cap V_j|\ge 2\}|.
\]
Choose $\kappa=\kappa(r,t,\mu)>0$ so small that
$((r-1)+10)\kappa<\mu/2$. Since every bad edge through $v$ is counted by at
least one of the quantities $b_j(v)$, the following dichotomy holds: either
\begin{equation}\label{eq:heavy-cross-alternative}
        b_j(v)\ge \kappa n^{r-1}
        \quad\text{for some }j\in\{2,\dots,r\},
\end{equation}
or
\begin{equation}\label{eq:heavy-same-alternative}
        b_1(v)\ge 10\kappa n^{r-1}.
\end{equation}
Indeed, if both alternatives failed, then
$d_B(v)\le \sum_j b_j(v)<((r-1)+10)\kappa n^{r-1}<\mu n^{r-1}$, contrary to the hypothesis. Choose $\gamma_h(\mu)>0$ with $\gamma_h(\mu)\le \min\{\kappa/16,1/(6r)\}$. We shall also shrink $\alpha_h(\mu)>0$ whenever needed below.

First assume \eqref{eq:heavy-cross-alternative} holds. After relabelling
$V_2,\dots,V_r$, suppose that $b_2(v)\ge \kappa n^{r-1}$. For pairs
$\{u,w\}\subseteq V_2$, write
\[
        \omega(u):=\sum_{w\in V_2\setminus\{u\}}d_B(v,u,w).
\]
Every bad edge counted by $b_2(v)$ contains at least one pair from $V_2$, and
hence
\[
        \sum_{\{u,w\}\subseteq V_2}d_B(v,u,w)\ge b_2(v)\ge \kappa n^{r-1}.
\]
Thus $\sum_{u\in V_2}\omega(u)\ge 2\kappa n^{r-1}$. Vertices in
$(L\cup R)\cap V_2$ contribute at most $(|L|+k_{\rm aux})n^{r-2}$. By Lemma~\ref{lem:sparse}, after shrinking $\alpha_h(\mu)$ and then taking
$n$ large, this contribution is at most $\kappa n^{r-1}$. Therefore some $u\in V_2\setminus(L\cup R)$ satisfies
$\omega(u)\ge \kappa n^{r-2}/2$.
Fix such a vertex $u$, and define
\[
        Y_2:=\{x\in V_2\setminus(R\cup\{u,v\}):
        d_B(v,u,x)\ge \kappa n^{r-3}/8\}.
\]
If $|Y_2|<\kappa n/8$, then
\[
        \omega(u)
        \le |Y_2|n^{r-3}+n\cdot \frac{\kappa}{8}n^{r-3}+O_{r,t}(n^{r-3})
        < \frac{\kappa}{2}n^{r-2}
\]
for all sufficiently large $n$, a contradiction. Hence $|Y_2|\ge \kappa n/8$.
For $j\ne 2$, set
\[
        Y_j:=\{x\in V_j\setminus(R\cup\{u,v\}):d_{\mathcal{H}}(u,x)>d\}.
\]
Since $u\notin L$, it is incident with fewer than $\alpha^{1/3}n$ sparse pairs
in total. The part-size estimate \eqref{eq:part-balance} then gives
\[
        |Y_j|\ge |V_j|-\alpha^{1/3}n-O_{r,t}(1)\ge n/(3r)
        \qquad\text{for } j\ne 2,
\]
after shrinking $\alpha_h(\mu)$ if necessary.
Put $C=\{v,u\}$, and choose
subsets $X_j\subseteq Y_j$ with $|X_j|\ge \gamma_h(\mu)n$ for every $j$.
Now let $U$ and $T=\{x_1,\dots,x_r\}$ be as in the statement. Because
$x_2\in X_2$, there are at least $\kappa n^{r-3}/8$ bad edges through
$\{v,u,x_2\}$. The number of these edges meeting
$(R\cup U\cup T)\setminus\{v,u,x_2\}$ is $O_{r,t}(n^{r-4})$ (and is zero when
$r=3$). Hence, for large $n$, there is a bad edge $P_2$ through
$\{v,u,x_2\}$ which avoids $R\cup U$ and satisfies $P_2\cap T=\{x_2\}$. For
every $j\ne 2$, the choice $x_j\in X_j\subseteq Y_j$ gives $d_{\mathcal{H}}(u,x_j)>d$.
Applying Lemma~\ref{lem:fan-completion} with forbidden set $R\cup U$, center
$u$, index $2$, fixed petal $P_2$, and transversal edge $T$, gives the desired
copy of $\mathcal{F}_r$.

It remains to treat the case in which \eqref{eq:heavy-cross-alternative}
fails. Then \eqref{eq:heavy-same-alternative} holds. Define
\[
        Y_1:=\{x\in V_1\setminus(R\cup\{v\}):
        d_B(v,x)\ge \kappa n^{r-2}\}.
\]
Every edge counted by $b_1(v)$ contains $v$ and at least one further vertex of
$V_1$, and therefore
\[
        \sum_{x\in V_1\setminus\{v\}}d_B(v,x)\ge b_1(v)
        \ge 10\kappa n^{r-1}.
\]
The set $Y_1$ has size at least $\kappa n/2$ for all large $n$. Otherwise the
last sum would be at most
\[
        (\kappa n/2)n^{r-2}+n\cdot \kappa n^{r-2}+O_{r,t}(n^{r-2})
        <10\kappa n^{r-1},
\]
a contradiction.
For $j\ge 2$, set
\[
        Y_j:=\{x\in V_j\setminus(R\cup\{v\}):d_{\mathcal{H}}(v,x)>d\}.
\]
We claim that $|Y_j|\ge \kappa n$ for every $j\ge 2$. Suppose not for some
$j$. Then
\[
        \sum_{x\in V_j} d_{\mathcal{H}}(v,x)
        \le |Y_j|n^{r-2}+|R|n^{r-2}+|V_j|d
        \le 2\kappa n^{r-1}
\]
for all sufficiently large $n$. Move $v$ from $V_1$ to $V_j$. An edge through
$v$ gains one represented part only if it contains another vertex of $V_1$ and
avoids $V_j$, thus it loses one represented part only if it contains a vertex of
$V_j$ and contains no other vertex of $V_1$. By maximality of $f_{\mathcal{H}}(\sigma)$,
the number of gaining edges is at most the number of losing edges. The losing
edges are bounded by the displayed sum. On the other hand, all bad edges
counted by $b_1(v)$ and avoiding $V_j$ are gaining edges, and there are at
least
\[
        b_1(v)-\sum_{x\in V_j}d_{\mathcal{H}}(v,x)>8\kappa n^{r-1}
\]
of them, a contradiction. 
Put $C=\{v\}$, and choose subsets $X_j\subseteq Y_j$ of size at least
$\gamma_h(\mu)n$. If $T=\{x_1,\dots,x_r\}\in \mathcal{H}-(R\cup U)$ with
$x_j\in X_j\setminus U$, then $d_B(v,x_1)\ge \kappa n^{r-2}$ and
$d_{\mathcal{H}}(v,x_j)>d$ for every $j\ge 2$. Lemma~\ref{lem:fan-completion}, applied
with forbidden set $R\cup U$, center $v$, index $1$, and case $(b)$ with
$\gamma_0=\kappa$, completes the fan.

Undoing the temporary relabelling proves the proposition.
\end{proof}

\subsection{Heavy vertices and the gap sequence}\label{Heavy}

This subsection isolates the vertices responsible for many bad edges. We show
that there can be at most $t-1$ such heavy vertices, introduce the gap
sequence measuring the gap from $\mathcal J_{t-1}(n)$, and derive degree
information needed to control the remaining bad edges.

We now fix the hierarchy of constants. Let $c_{r,t}>0$ be the constant from Lemma~\ref{lem:deterministic}, and put
\[
        c_{\rm bad}:=r^2t,
        \qquad
        \lambda_r:=
        \frac14\left(\frac1{(r-1)!}-\frac1{r^{r-1}}\right)>0,
        \qquad\text{and}\qquad
        \gamma_b:=\frac1{4r}.
\]
Choose
\begin{equation}\label{eq:delta-hierarchy}
        0<\delta
        \ll
        \min\{c_{r,t}/c_{\rm bad},c_{r,t},\lambda_r,\gamma_b^{rt},
        \gamma_h(\lambda_r)^{rt}\}.
\end{equation}
After $\delta$ has been fixed, choose $\alpha>0$ sufficiently small so that
\begin{equation}\label{eq:alpha-hierarchy}
        \begin{aligned}
        \alpha
        \ll
        \min\{\delta^r,c_{r,t}^r,\lambda_r^r,
        \gamma_b^{rt},\gamma_h(\delta)^{rt},
        \gamma_h(\lambda_r)^{rt}\},
        \quad\text{and}\quad
        \alpha
        \le \min\{\alpha_h(\delta),\alpha_h(\lambda_r)\}.
        \end{aligned}
\end{equation}
Let
\[
        \eta_*:=\frac12\min\{\gamma_b,\gamma_h(\lambda_r)\}.
\]
The choices of $\delta$ and then $\alpha$ are also made sufficiently small so
that, for each fixed constant $s=s(r,t)$ occurring below,
$s(\delta+\alpha^{1/r})$ satisfies the smallness requirement for the parameter
$\zeta$ in Lemma~\ref{lem:transversal-selection}, both with
$\eta=\gamma_b/2$ and with $\eta=\eta_*/2$.
Finally, $n$ is taken sufficiently large in terms of all previously fixed
constants.

Define
\[
        W:=\{v\in V(\mathcal{H}): d_B(v)\ge \delta n^{r-1}\}.
\]
Vertices in $W$ will be called \emph{heavy}, and vertices outside $W$ will be called \emph{non-heavy}. Whenever Proposition~\ref{prop:heavy-package} is applied to a heavy
vertex $v$ with forbidden set $R$, we call the resulting data
\[
        R,\qquad C(v;R),\qquad X_1(v;R),\ldots,X_r(v;R)
\]
a \emph{heavy package}. If this package is assigned index $a$, we write
\[
        R_a:=R,\qquad C_a:=C(v;R),\qquad
        X_{a,j}:=X_j(v;R)\quad\text{for }j\in[r].
\]
We call $C_a$ its \emph{seed set} and $X_{a,1},\ldots,X_{a,r}$ its \emph{candidate
sets}.

\begin{cor}\label{cor:heavy-bound}
The set of heavy vertices satisfies $|W|\le t-1$.
\end{cor}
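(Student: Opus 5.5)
Suppose for contradiction that $|W|\ge t$, and fix $t$ distinct heavy vertices $v_1,\dots,v_t\in W$. The plan is to build a copy of $\mathcal F_{r,t}$ in $\mathcal H$, contradicting $\mathcal F_{r,t}$-freeness. This has two parts: attach one fan to each $v_a$ via Proposition~\ref{prop:heavy-package}, and then choose the transversal vertices so that every blowup transversal edge is present.

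\emph{Step 1: heavy packages.} Each $v_a$ satisfies $d_B(v_a)\ge\delta n^{r-1}$, so Proposition~\ref{prop:heavy-package} applies with $\mu=\delta$, because $\alpha\le\alpha_h(\delta)$ by \eqref{eq:alpha-hierarchy}. We build the packages sequentially. For $a=1,\dots,t$, let
\[
R_a:=\{v_1,\dots,v_t\}\setminus\{v_a\}\cup C_1\cup\cdots\cup C_{a-1},
\]
which has bounded size, well below $k_{\rm aux}$. Applying the proposition gives a seed set $C_a\ni v_a$ with $|C_a|\le 2$, disjoint from the earlier seeds and the other heavy vertices. It also gives candidate sets $X_{a,j}\subseteq V_j$ with $|X_{a,j}|\ge\gamma_h(\delta)n$.

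\emph{Step 2: choosing the transversal vertices.} We pick $x_j^a\in X_{a,j}$ for all $a\in[t]$ and $j\in[r]$, all distinct and outside $\bigcup_a C_a$. We need every $r$-set $\{x_1^{a_1},\dots,x_r^{a_r}\}$ with $(a_1,\dots,a_r)\in[t]^r$ to be an edge of $\mathcal H$; this covers both the individual transversal edges and all blowup transversal edges. These $r$-sets are crossing, so the requirement is that a complete $r$-partite $r$-graph $K^{(r)}_{t,\dots,t}$ appears in $\mathcal H$, with the $a$th vertex of part $j$ drawn from $X_{a,j}$. Missing crossing edges number at most $\alpha n^r$, while the product $\prod_{a,j}|X_{a,j}|$ is at least $(\gamma_h(\delta)n)^{rt}$. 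Since $\alpha\ll\gamma_h(\delta)^{rt}$, a random choice of one vertex from each $X_{a,j}$ hits a missing crossing edge with probability at most $t^r\alpha n^r/(\gamma_h(\delta)n)^r<1$, by a union bound over the $t^r$ index tuples. Hence a good choice exists. The forward-referenced Lemma~\ref{lem:transversal-selection} presumably packages exactly this selection statement, and I would invoke it with $\eta=\eta_*/2$ or a similar parameter. Coincidences among the chosen vertices occur with probability $O(1/n)$ and can be ignored.

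\emph{Step 3: completing the fans.} We process $a=1,\dots,t$ in order. For fan $a$, let $U$ be the union of all chosen transversal vertices of the other fans, the already completed fans, and the seeds $C_b$ for $b\ne a$; its size is bounded. The edge $T_a=\{x_1^a,\dots,x_r^a\}$ lies in $\mathcal H-(R_a\cup U)$ and meets each $X_{a,j}\setminus U$. The robustness clause of Proposition~\ref{prop:heavy-package} then extends $T_a$ to a fan containing $C_a$ and avoiding $R_a\cup U$. The later seeds $C_b$ with $b>a$ were not in $R_a$, which is why we put them into $U$; this needs $|R_a\cup U|\le k_{\rm aux}$, which holds since $k_{\rm aux}=1000trh$. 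The resulting $t$ fans are vertex-disjoint, and together with the transversal edges from Step 2 they form $\mathcal F_{r,t}$. This contradicts $\mathcal F_{r,t}$-freeness, so $|W|\le t-1$.

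The main obstacle is Step 2, where we need simultaneous control of all $t^r$ blowup transversal edges across different candidate sets; the supersaturation/union-bound argument works only because $\alpha$ is chosen far below $\gamma_h(\delta)^{rt}$. The remaining work is the bookkeeping of forbidden sets.
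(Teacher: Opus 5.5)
Your proposal is correct and follows the paper's proof essentially step for step. It builds recursive heavy packages with $\mu=\delta$ and pairwise disjoint seeds, selects the transversal vertices outside the seeds by a counting/union bound over the $t^r$ index vectors (using $\alpha\ll\gamma_h(\delta)^{rt}$), and completes the fans greedily via the robustness clause, with later seeds and later transversal vertices placed in the forbidden set. One small correction to your aside: if you cite Lemma~\ref{lem:transversal-selection}, do so with $A=\emptyset$ and $\eta=\gamma_h(\delta)/2$, not $\eta_*/2$. Since $\gamma_h(\delta)<\delta\ll\eta_*$, the candidate sets here need not have size $\eta_* n/2$. Your direct union-bound argument makes the citation unnecessary anyway.
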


\begin{proof}
Suppose, for a contradiction, that $w_1,\ldots,w_t\in W$ are distinct.
We construct heavy packages recursively. Assume that packages have
already been chosen for all $b<a$, and put
\[
        R_a:= \{w_{a+1},\ldots,w_t\} \cup \bigcup_{b<a}C_b.
\]
For each $b<a$, the vertex $w_a$ belonged to the forbidden set $R_b$, so Proposition~\ref{prop:heavy-package}
gave $C_b\subseteq V(\mathcal H)\setminus R_b$. Thus $w_a\notin C_b$ for all $b<a$, and hence $w_a\notin R_a$. Also $|R_a|<k_{\rm aux}$.
Applying Proposition~\ref{prop:heavy-package} with $\mu=\delta$ to
$w_a$ and $R_a$ gives the $a$th heavy package, namely
\[
        C_a=C(w_a;R_a),\qquad X_{a,j}=X_j(w_a;R_a)\quad\text{for }j\in[r].
\]
Since $C_a\cap R_a=\emptyset$, the new seed set avoids all previous seed sets and all
later vertices $w_{a+1},\dots,w_t$. Thus the recursion is valid, and the seed sets
$C_1,\dots,C_t$ are pairwise disjoint.
Let
\[
        C_{\rm seed}:=\bigcup_{a=1}^t C_a,
        \qquad
        Y_{a,j}:=X_{a,j}\setminus C_{\rm seed}.
\]
With $\eta:=\gamma_h(\delta)/2$, all sets $Y_{a,j}$ have size at least
$\eta n$ for all sufficiently large $n$. We now choose vertices $x_{a,j}\in Y_{a,j}$. There are at least $\eta^{rt}n^{rt}$ choices, while the choices identifying two selected vertices in the same part contribute only $O_{r,t}(n^{rt-1})$.

Fix $(a_1,\dots,a_r)\in[t]^r$. The number of choices for which 
$\{x_{a_1,1},\dots,x_{a_r,r}\}\notin \mathcal{H}$ is at most
\[
        |\mathcal{T}_\sigma\setminus\mathcal{H}|O_{r,t}(n^{rt-r})
        \le O_{r,t}(\alpha n^{rt}).
\]
Taking the union over the at most $t^r$ vectors and using $\alpha\ll_{r,t}\eta^{rt}$, we may choose the vertices $x_{a,j}$ so that they
are pairwise distinct within each part, and all required transversal edges are present.
We embed the fans greedily. At step $a$, let $S_a$ be the union of the vertices already used, the later seed sets $C_b$ with $b>a$, and the selected vertices $x_{b,j}$ with $b>a$ and $j\in[r]$. Then $S_a\cap C_a=\varnothing$, and $|R_a\cup S_a|\le k_{\rm aux}$. Moreover, $T_a:=\{x_{a,1},\dots,x_{a,r}\}$ is an edge of $\mathcal{H}-(R_a\cup S_a)$, with
$x_{a,j}\in X_{a,j}\setminus S_a$ for every $j\in[r]$. Hence the extension
property in Proposition~\ref{prop:heavy-package} gives a copy of
$\mathcal{F}_r$ in $\mathcal{H}-(R_a\cup S_a)$ containing $C_a$ and with
transversal edge $T_a$.
After all $t$ steps, we have $t$ vertex-disjoint fans, and the selected transversal vertices 
supply all blowup transversal edges between them. Thus $\mathcal{H}$ contains
a copy of $\mathcal{F}_{r,t}$, a contradiction.\qedhere
\end{proof}

Define the \emph{gap sequence}
\[
        \Delta_n:=\ex(n,\mathcal{F}_{r,t})-|\mathcal{J}_{t-1}(n)|.
\]
By Lemma~\ref{lem:deterministic}, $\Delta_n\ge 0$. We prove that there is
$n_0$ such that, for every $n\ge n_0$,
\begin{equation}\label{eq:gap-step}
        \Delta_n\ge \Delta_{n-1}\quad\Longrightarrow\quad \Delta_n=0.
\end{equation}
This implication is enough for the eventual exact result. Indeed, if
$\Delta_n>0$ for some $n\ge n_0$, then \eqref{eq:gap-step} gives
$\Delta_n<\Delta_{n-1}$. Hence, as long as the sequence remains positive in
indices at least $n_0$, it strictly decreases. A non-negative integer sequence
cannot strictly decrease indefinitely, so it must reach zero. Moreover, if
$\Delta_m=0$ for some $m\ge n_0$ and $\Delta_{m+1}>0$, then applying the
contrapositive of \eqref{eq:gap-step} at $m+1$ gives
$\Delta_{m+1}<\Delta_m=0$, impossible. Thus $\Delta_n=0$ for all sufficiently
large $n$.

For $i\in [r]$ put
\[
        \Pi_i:=\prod_{j\ne i}|V_j|.
\]

The monotonicity alternative $\Delta_n\ge\Delta_{n-1}$ forces high minimum
degree in an extremal graph. Comparing this degree with the full crossing
degree at a vertex gives the useful principle that missing crossing edges must
be compensated by bad edges, up to a small error.

\begin{prop}\label{prop:degree}
Assume $\Delta_n\ge \Delta_{n-1}$. Then there is a constant $a_0>0$
such that, for every $v\in V_i$,
\[
        d_{\mathcal{H}}(v)\ge \Pi_i-a_0(\alpha^{1/r}n^{r-1}+n^{r-2}).
\]
Consequently,
\[
        d_B(v)\ge d_M(v)-a_0(\alpha^{1/r}n^{r-1}+n^{r-2}).
\]
\end{prop}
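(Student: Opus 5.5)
The plan is to compare $\mathcal H$ with an extremal graph on $n-1$ vertices. Deleting a vertex $v$ from $\mathcal H$ leaves an $\mathcal F_{r,t}$-free $r$-graph on $n-1$ vertices. Hence
\[
        |\mathcal H|-d_{\mathcal H}(v)\le \ex(n-1,\mathcal F_{r,t})
        =|\mathcal J_{t-1}(n-1)|+\Delta_{n-1}.
\]
Since $|\mathcal H|=|\mathcal J_{t-1}(n)|+\Delta_n$ and $\Delta_n\ge\Delta_{n-1}$, this yields
\[
        d_{\mathcal H}(v)\ge |\mathcal J_{t-1}(n)|-|\mathcal J_{t-1}(n-1)|=:D_n
\]
for every vertex $v$. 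Thus the minimum degree $\delta_1(\mathcal H)$ is at least the minimum degree of the construction $\mathcal J_{t-1}(n)$, which is attained at a partite vertex. It remains to compare $D_n$ with $\Pi_i$.

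To compute $D_n$, use the formula from Lemma~\ref{lem:deterministic}:
\[
        D_n=\tbinom{n-1}{r-1}-\tbinom{n-t}{r-1}+t_r^{(r)}(n-t+1)-t_r^{(r)}(n-t).
\]
The first difference is $O_{r,t}(n^{r-2})$. Removing one vertex from the largest part of a balanced partition shows that the second difference equals the crossing degree of a vertex in a balanced $r$-partite $r$-graph on $n-t+1$ vertices, which is $\prod$ of $r-1$ part sizes, each equal to $(n-t+1)/r+O(1)$. So $D_n=(n/r)^{r-1}-O_{r,t}(n^{r-2})$.

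On the other hand, by \eqref{eq:part-balance} each $|V_j|\le n/r+\alpha^{1/r}n$. Hence
\[
        \Pi_i\le (n/r+\alpha^{1/r}n)^{r-1}\le (n/r)^{r-1}+O_r(\alpha^{1/r}n^{r-1}).
\]
Combining the two estimates gives $d_{\mathcal H}(v)\ge\Pi_i-a_0(\alpha^{1/r}n^{r-1}+n^{r-2})$ for a suitable constant $a_0$ depending on $r$ and $t$.

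For the consequence, split the edges through $v\in V_i$ into crossing edges of $\mathcal T_\sigma$ and bad edges. The crossing edges of $\mathcal T_\sigma$ through $v$ number exactly $\Pi_i$, and exactly $d_M(v)$ of them are missing from $\mathcal H$. Therefore
\[
        d_{\mathcal H}(v)=\Pi_i-d_M(v)+d_B(v).
\]
Inserting the lower bound on $d_{\mathcal H}(v)$ gives the claimed inequality $d_B(v)\ge d_M(v)-a_0(\alpha^{1/r}n^{r-1}+n^{r-2})$.

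The only delicate point is the computation of the difference $t_r^{(r)}(m)-t_r^{(r)}(m-1)$ for balanced partitions: one must check that it equals the product of the other part sizes after deleting a vertex from a largest part. This is standard, and any $O(n^{r-2})$ slack is harmless. Everything else is bookkeeping, with the part-size error $\alpha^{1/r}n$ from \eqref{eq:part-balance} producing the $\alpha^{1/r}n^{r-1}$ term.
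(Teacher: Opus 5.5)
Your proposal is correct and follows essentially the same route as the paper. Comparing $\mathcal H-v$ with $\ex(n-1,\mathcal F_{r,t})$ and using $\Delta_n\ge\Delta_{n-1}$ gives $d_{\mathcal H}(v)\ge|\mathcal J_{t-1}(n)|-|\mathcal J_{t-1}(n-1)|=(n/r)^{r-1}+O_{r,t}(n^{r-2})$, which is compared with $\Pi_i$ via \eqref{eq:part-balance}, and the identity $d_{\mathcal H}(v)=\Pi_i-d_M(v)+d_B(v)$ yields the bound on $d_B(v)$; your explicit evaluation of $t_r^{(r)}(m)-t_r^{(r)}(m-1)$ by deleting a vertex from a largest part simply fills in the ``direct count'' the paper leaves to the reader.
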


\begin{proof}
Deleting a minimum-degree vertex from the extremal graph $\mathcal{H}$ leaves an
$(n-1)$-vertex $\mathcal{F}_{r,t}$-free $r$-graph. Thus
\[
        \delta_1(\mathcal{H})\ge \ex(n,\mathcal{F}_{r,t})-\ex(n-1,\mathcal{F}_{r,t}).
\]
Using the definition of $\Delta_n$ and the assumption
$\Delta_n\ge \Delta_{n-1}$, we get
\[
        \delta_1(\mathcal{H})\ge |\mathcal{J}_{t-1}(n)|-|\mathcal{J}_{t-1}(n-1)|.
\]
A direct count gives
\[
        |\mathcal{J}_{t-1}(n)|-|\mathcal{J}_{t-1}(n-1)|
        =\frac{1}{r^{r-1}}n^{r-1}+O_{r,t}(n^{r-2}).
\]
On the other hand, \eqref{eq:part-balance} gives
\[
        \Pi_i=\frac{1}{r^{r-1}}n^{r-1}+O(\alpha^{1/r}n^{r-1}).
\]
This proves the stated lower bound on $d_{\mathcal{H}}(v)$. Finally,
\[
        d_{\mathcal{H}}(v)=\Pi_i-d_M(v)+d_B(v),
\]
so the lower bound on $d_B(v)$ follows immediately.
\end{proof}

We will pay special attention to bad edges disjoint from $W$; these are exactly
the bad edges not meeting any heavy vertex.
The next lemma shows that each such bad edge still carries enough local
structure to act like one fan in a later copy of $\mathcal F_{r,t}$.

\begin{lemma}\label{lem:bad-package}
Assume $\Delta_n\ge \Delta_{n-1}$. There exists a constant $a_{\rm miss}>0$ such that the following holds. Let $e\in \mathcal{H}\setminus \mathcal{T}_\sigma$ be a
bad edge disjoint from $W$. Choose a part $i$ with $|e\cap V_i|\ge 2$, and choose
distinct vertices $c_e,u_e\in e\cap V_i$. Then there are sets $X_j(e)\subseteq V_j\setminus e$ for $j\in [r]\setminus\{i\}$, each of size at least $\gamma_b n$, such that
\begin{equation}\label{eq:missing-ue}
        d_M(u_e)\le a_{\rm miss}(\delta+\alpha^{1/r})n^{r-1}+O_{r,t}(n^{r-2}).
\end{equation}
Moreover, if $U\subseteq V(\mathcal{H})$ satisfies
$U\cap e=\varnothing$ and $|U|\le k_{\rm aux}$, then every present crossing edge
\[
        T=\{u_e\}\cup\{x_j:j\ne i\}\in \mathcal{H}-U,
        \qquad\text{and}\qquad x_j\in X_j(e)\setminus U,
\]
extends inside $\mathcal{H}-U$ to a copy of $\mathcal{F}_r$ with center $c_e$, $i$th
transversal vertex $u_e$, $i$th petal $e$, and transversal edge $T$.
\end{lemma}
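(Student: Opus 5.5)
The plan has three steps: bound the missing degree of $u_e$ from the degree principle, convert that into large sets $X_j(e)$ of dense pairs, and then close with Lemma~\ref{lem:fan-completion}.

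\textbf{Step 1: the bound \eqref{eq:missing-ue}.} Since $u_e\notin W$, we have $d_B(u_e)<\delta n^{r-1}$. Proposition~\ref{prop:degree} gives $d_M(u_e)\le d_B(u_e)+a_0(\alpha^{1/r}n^{r-1}+n^{r-2})$. Hence \eqref{eq:missing-ue} holds with $a_{\rm miss}:=1+a_0$. The choice of $u_e$ among $e\cap V_i$ plays no role here, because every vertex of $e$ is non-heavy.

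\textbf{Step 2: the sets $X_j(e)$.} For $j\ne i$, I will set
\[
        X_j(e):=\{x\in V_j\setminus e:\ d_{\mathcal H}(c_e,x)>d \text{ and } d_{\mathcal H}(u_e,x)>d\}.
\]
I need to show $|X_j(e)|\ge \gamma_b n$. Take $v\in\{c_e,u_e\}$ and suppose $x\in V_j$ with $d_{\mathcal H}(v,x)\le d$. The pair $\{v,x\}$ lies in at least $c_rn^{r-2}$ crossing $r$-sets by \eqref{eq:part-balance}. At most $d=O(n^{r-3})$ of these are present, so at least $(c_r/2)n^{r-2}$ of them are missing edges through $v$. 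Each missing edge through $v$ contains at most $r-1$ such partners $x$. Therefore the number of such $x$ is at most
\[
        \frac{2(r-1)d_M(v)}{c_rn^{r-2}}.
\]
Now apply Step 1 to both $c_e$ and $u_e$; both are outside $W$, so Proposition~\ref{prop:degree} applies to each. This gives $d_M(v)=O((\delta+\alpha^{1/r})n^{r-1})$, so the number of bad $x$ is $O((\delta+\alpha^{1/r})n)$. By \eqref{eq:part-balance}, $|V_j|\ge n/r-\alpha^{1/r}n$. By the hierarchy \eqref{eq:delta-hierarchy}--\eqref{eq:alpha-hierarchy}, the bad vertices number at most $n/(4r)$, and discarding the $r$ vertices of $e$ costs nothing asymptotically. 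Hence $|X_j(e)|\ge n/(2r)\ge\gamma_b n$.

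\textbf{Step 3: the extension.} Given $U$ disjoint from $e$ with $|U|\le k_{\rm aux}$, and a present crossing edge $T=\{u_e\}\cup\{x_j\}$ with $x_j\in X_j(e)\setminus U$, I apply Lemma~\ref{lem:fan-completion} with the following data:
\begin{itemize}
\item forbidden set $R:=U$;
\item center $z:=c_e\in V_i\setminus R$, with $x_i:=u_e\ne c_e$;
\item case (a), with prescribed petal $P_i:=e$.
\end{itemize}
I then check the hypotheses of case (a):
\begin{itemize}
\item $e$ is a bad edge of $\mathcal H-R$, since $e\cap U=\varnothing$.
\item $e$ contains $\{c_e,u_e\}$.
\item $e\cap T=\{u_e\}$. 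This holds because $T\setminus\{u_e\}$ consists of vertices $x_j\in X_j(e)\subseteq V_j\setminus e$.
\item $d_{\mathcal H}(c_e,x_j)>d$ for all $j\ne i$, which holds by the definition of $X_j(e)$.
\end{itemize}
The lemma then yields the required copy of $\mathcal F_r$ with center $c_e$, $i$th transversal vertex $u_e$, $i$th petal $e$ and transversal edge $T$.

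The main obstacle is Step 2: turning the missing-degree bound into many dense pairs at $c_e$. This needs care with the constants in the counting of missing crossing $r$-sets through a sparse pair. Requiring density for $u_e$ as well is not needed for the completion itself, but it is harmless and may be useful later.
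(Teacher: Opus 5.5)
Your proposal is correct and follows the paper's proof: the same missing-degree bound from Proposition~\ref{prop:degree}, the same candidate sets of dense pairs at $c_e$ obtained by counting sparse partners through missing crossing edges, and the same application of Lemma~\ref{lem:fan-completion} in case (a) with prescribed petal $e$. Two differences are harmless. First, you also require density at $u_e$, which is not needed and only shrinks sets that remain of size at least $\gamma_b n$. Second, you use a crude factor $r-1$, whereas the paper notes that each crossing edge meets $V_j$ exactly once.
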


\begin{proof}
Since $e$ is disjoint from $W$, $d_B(c_e)<\delta n^{r-1}$ and $d_B(u_e)<\delta n^{r-1}$. Proposition~\ref{prop:degree} implies
\[
        d_M(c_e),d_M(u_e)
        \le a_{\rm miss}(\delta+\alpha^{1/r})n^{r-1}+O_{r,t}(n^{r-2}),
\]
which gives \eqref{eq:missing-ue}.

For $j\ne i$, define
\[
        X_j(e):=\{x\in V_j\setminus e:d_{\mathcal{H}}(c_e,x)>d\}.
\]
We show that $|X_j(e)|\ge \gamma_b n$. A cross-pair $\{c_e,x\}$ with
$x\in V_j$ lies in $\Omega_r(n^{r-2})$ crossing $r$-sets. If it is sparse,
then $\Omega_r(n^{r-2})$ of those crossing sets are missing. The collections of missing edges obtained from different vertices $x\in V_j$ are disjoint once the common vertex $c_e$ is fixed. Hence, the number of sparse vertices $x\in V_j$ is at most
\[
        O_{r,t}\left(\frac{d_M(c_e)}{n^{r-2}}\right)
        \le O_{r,t}((\delta+\alpha^{1/r})n)+O_{r,t}(1).
\]
By \eqref{eq:delta-hierarchy}, \eqref{eq:alpha-hierarchy}, and the part-size estimate
\eqref{eq:part-balance}, this
leaves at least $\gamma_b n$ vertices in $V_j\setminus e$.
Now fix $U$ and a present crossing edge
$T=\{u_e\}\cup\{x_j:j\ne i\}$ as in the statement. The fixed bad edge $e$
contains $\{c_e,u_e\}$ and meets $T$ exactly in $u_e$, because
$x_j\in V_j\setminus e$. Also, $d_{\mathcal{H}}(c_e,x_j)>d$ for every $j\ne i$. Lemma~\ref{lem:fan-completion}, applied with center $c_e$, index $i$, fixed petal $e$, and transversal edge $T$, gives the desired copy of $\mathcal{F}_r$ in $\mathcal{H}-U$.
\end{proof}

\subsection{Transversal selection and realization}

The previous subsection produces packages of candidate sets from heavy vertices
and bad edges disjoint from $W$. Here we prove that, provided the missing degrees are
small, one can choose transversal vertices from these candidate sets so that all
blowup transversal edges are present simultaneously.

Whenever Lemma~\ref{lem:bad-package} is applied to a
bad edge $e$, with repeated coordinate $i_0$ and chosen vertices
$c_e,u_e\in e\cap V_{i_0}$, we call the resulting data a \emph{bad-edge package}. If this package is assigned index $a$, we write
\[
        e_a:=e,\qquad c_a:=c_e,\qquad u_a:=u_e,
\]
and set
\[
        C_a:=e_a,\qquad X_{a,i_0}:=\{u_a\},\qquad
        X_{a,j}:=X_j(e_a)\quad\text{for }j\ne i_0.
\]
The vertex $u_a$ is called the \emph{prescribed vertex} of the package. All candidate sets other than the prescribed singletons are called
\emph{non-singleton candidate sets}.

The next selection lemma is a simple counting lemma. It chooses one transversal
vertex from each candidate set, while avoiding collisions and avoiding the
small family of missing crossing edges. The prescribed singleton candidates are
allowed because their missing degrees are separately bounded.

\begin{lemma}\label{lem:transversal-selection}
Let $1\le p\le t$, let $i_0\in[r]$, and let $A\subseteq[p]$. For each $a\in[p]$ and $j\in[r]$, let $Y_{a,j}\subseteq V_j$ be given.
Assume that, for every $a\in A$, $Y_{a,i_0}=\{u_a\}$, where the vertices $u_a\in V_{i_0}$ are distinct and satisfy $d_M(u_a)\le \zeta n^{r-1}$.
Assume also that all non-singleton candidate sets are large: $|Y_{a,j}|\ge \eta n$ for every $(a,j)\notin A\times\{i_0\}$. 
If $\alpha,\zeta>0$ are sufficiently small in terms of $r,t,\eta$ and
$n$ is sufficiently large, then one can choose vertices
$x_{a,j}\in Y_{a,j}$ such that $x_{a,j}\ne x_{b,j}$ whenever $a\ne b$ and $j\in[r]$, and such that $\{x_{a_1,1},\ldots,x_{a_r,r}\}\in \mathcal H$ for every $(a_1,\ldots,a_r)\in[p]^r$.
\end{lemma}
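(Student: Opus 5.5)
We plan a random-selection (first-moment) argument, much as in the proof of Corollary~\ref{cor:heavy-bound}, but with care for the prescribed singletons. For every $a\in A$ the vertex $x_{a,i_0}=u_a$ is forced. All other vertices $x_{a,j}$ with $(a,j)\notin A\times\{i_0\}$ are chosen independently and uniformly from $Y_{a,j}$. The free coordinates number $N:=rp-|A|$, and the number of choices is $\prod|Y_{a,j}|\ge(\eta n)^{N}$. Collisions, meaning $x_{a,j}=x_{b,j}$ for some $a\ne b$ with at least one of the two free, occur in only $O_{r,t}(n^{N-1})$ choices. Here a forced vertex $u_b$ coinciding with a free choice costs one factor of $n$, and two forced vertices are distinct by hypothesis.

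Next we bound, for each vector $\vec a=(a_1,\dots,a_r)\in[p]^r$, the number of choices for which $T_{\vec a}:=\{x_{a_1,1},\dots,x_{a_r,r}\}$ is a non-edge while the choice is collision-free. Collision-freeness makes $T_{\vec a}$ a genuine crossing $r$-set, since $x_{a_j,j}\in V_j$. So $T_{\vec a}\notin\mathcal H$ means $T_{\vec a}\in\mathcal T_\sigma\setminus\mathcal H$ is missing. Two cases arise. If $a_{i_0}\notin A$ (or the $i_0$ coordinate is free), all $r$ vertices of $T_{\vec a}$ are free coordinates. The number of bad choices is then at most $|\mathcal T_\sigma\setminus\mathcal H|\cdot n^{N-r}\le\alpha n^{N}$. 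If $a_{i_0}\in A$, the $i_0$-vertex is the fixed $u_{a_{i_0}}$ and the other $r-1$ vertices are free. The number of bad choices is then at most $d_M(u_{a_{i_0}})\, n^{N-(r-1)}\le\zeta n^{N}$. Some of the free coordinates $(a_j,j)$ may coincide across different $j$ only if they are the same pair, which is impossible because the coordinate index $j$ differs. So the $r-1$ (respectively $r$) vertices really are distinct free variables, and the counting is legitimate.

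Summing over at most $t^r$ vectors, the bad choices total at most $t^r(\alpha+\zeta)n^N+O(n^{N-1})$. This is less than $(\eta n)^N$ once $\alpha,\zeta\ll_{r,t}\eta^{rt}$ and $n$ is large, noting $N\le rt$. Hence some choice is collision-free with all $T_{\vec a}\in\mathcal H$, which proves the lemma. The only delicate point, and the one we expect to need the most care, is the singleton case. There we must use the missing-degree bound $d_M(u_a)\le\zeta n^{r-1}$ in place of the global bound $\alpha n^r$: a single fixed vertex could carry a non-negligible share of missing edges relative to $n^{N}$ unless its own missing degree is controlled, and this is exactly why the hypothesis \eqref{eq:missing-ue} was recorded in Lemma~\ref{lem:bad-package}.
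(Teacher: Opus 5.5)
Your proposal is correct and is essentially the paper's own counting argument. You fix the prescribed vertices $u_a$, count assignments of the $N=rp-|A|$ free coordinates, discard the $O_{r,t}(n^{N-1})$ colliding ones, and bound the assignments with a missing transversal edge by $|\mathcal T_\sigma\setminus\mathcal H|\,n^{N-r}$ or $d_M(u_{a_{i_0}})\,n^{N-r+1}$, according to whether $a_{i_0}\in A$.

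One small correction: $T_{\vec a}$ is a crossing $r$-set automatically, because its vertices lie in the distinct parts $V_1,\dots,V_r$. Collision-freeness is not what guarantees this.
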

\begin{proof}
Let
$$
         I_{\rm tr}:=([p]\times [r])\setminus (A\times\{i_0\}),
        \qquad N:=|I_{\rm tr}|=pr-|A|.
$$
For $a\in A$ set $x_{a,i_0}:=u_a$. We choose the remaining variables
$x_{a,j}$, $(a,j)\in I_{\rm tr}$. There are at least $\eta^N n^N$ assignments. Call an assignment colliding if two selected vertices in the same part
coincide. This includes coincidences between a non-prescribed vertex in
$V_{i_0}$ and one of the prescribed vertices $u_a$. Since $p\le t$ and
all non-singleton candidate sets have size at least $\eta n$, the number of colliding assignments is $O_{r,t}(n^{N-1})$.
Fix $\mathbf a=(a_1,\ldots,a_r)\in[p]^r$ and put
$
        T_{\mathbf a}:=\{x_{a_1,1},\ldots,x_{a_r,r}\}.
$
Apart from the colliding assignments already counted, the set
$T_{\mathbf a}$ contains a prescribed vertex if and only if
$a_{i_0}\in A$.

First suppose $a_{i_0}\notin A$. Then all $r$ vertices of
$T_{\mathbf a}$ are chosen from non-singleton candidate sets. The number
of assignments for which $T_{\mathbf a}$ is missing is at most
$$
        |\mathcal T_\sigma\setminus\mathcal H|\,O_{r,t}(n^{N-r})
        =
        O_{r,t}(\alpha n^N).
$$
Now suppose $a_{i_0}\in A$. Then $T_{\mathbf a}$ contains the prescribed
vertex $u_{a_{i_0}}$. The missing-degree bound at this vertex gives at
most
$$
        O_{r,t}\bigl(d_M(u_{a_{i_0}})n^{N-r+1}\bigr)
        =
        O_{r,t}(\zeta n^N)
$$
assignments for which $T_{\mathbf a}$ is missing.
There are at most $t^r$ choices of $\mathbf a$. Hence the total number
of forbidden assignments is at most
$$
        O_{r,t}(n^{N-1})
        +O_{r,t}(\alpha n^N)
        +O_{r,t}(\zeta n^N).
$$
For $\alpha$ and $\zeta$ sufficiently small in terms of $r,t,\eta$, and
then for $n$ sufficiently large, this is smaller than
$\eta^N n^N$. Therefore some assignment is non-colliding and makes every
required transversal edge present. This assignment satisfies the desired
conclusion.
\end{proof}

Once the transversal vertices have been chosen, the package definitions provide
the individual fans. The assembly lemma records this final step: $t$ compatible
packages force a copy of the whole semi-blowup fan.

\begin{lemma}\label{lem:assembly}
Suppose that packages indexed by $a\in[t]$ are given, each of which is either a heavy package or a bad-edge package. Assume that the seed sets $C_a$ are pairwise disjoint, that all bad-edge packages have the same repeated coordinate $i_0$, that every non-singleton candidate set has size at least $\eta n$, and that every prescribed vertex $u_a$ satisfies $ d_M(u_a)\le \zeta n^{r-1}$, 
where $\alpha,\zeta\ll_{r,t,\eta}1$. Then $\mathcal H$ contains a copy
of $\mathcal F_{r,t}$.
\end{lemma}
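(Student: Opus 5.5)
The plan has two stages. First I choose the transversal vertices with Lemma~\ref{lem:transversal-selection}. Then I complete the $t$ individual fans greedily, one package at a time, using the extension properties built into the two package types.

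\emph{Selection of transversal vertices.} Take $p=t$, let $A$ be the set of indices of bad-edge packages, and let $i_0$ be their common repeated coordinate. Set
\[
Y_{a,j}:=X_{a,j}\setminus C_{\rm seed},\qquad C_{\rm seed}:=\bigcup_{b}C_b,
\]
for every non-singleton candidate set, and $Y_{a,i_0}:=\{u_a\}$ for $a\in A$.
\begin{itemize}
\item The prescribed vertices $u_a$ are distinct, because the seed sets $C_a=e_a$ are pairwise disjoint.
\item Removing the bounded set $C_{\rm seed}$ (of size at most $rt$) leaves $|Y_{a,j}|\ge \eta n/2$ for large $n$.
\item By hypothesis, $d_M(u_a)\le \zeta n^{r-1}$.
\end{itemize}
Lemma~\ref{lem:transversal-selection}, applied with $\eta/2$, then gives vertices $x_{a,j}\in Y_{a,j}$, pairwise distinct within each part, such that every $\{x_{a_1,1},\dots,x_{a_r,r}\}$ with $(a_1,\ldots,a_r)\in[t]^r$ lies in $\mathcal H$. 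In particular each $T_a:=\{x_{a,1},\dots,x_{a,r}\}$ is an edge of $\mathcal H$. The transversal vertices are distinct across different parts automatically, so all $rt$ of them are distinct.

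One point needs care: a selected non-prescribed vertex must not lie in another package's seed set. Removing $C_{\rm seed}$ from the candidate sets guarantees this. The prescribed vertex $u_a$ lies in its own seed $e_a$, which is intended, and in no other seed, by disjointness.

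\emph{Greedy completion of the fans.} Process $a=1,\dots,t$ in order. Let $U_a$ consist of
\begin{itemize}
\item all vertices used by the fans already built,
\item the seed sets $C_b$ for $b>a$,
\item the transversal vertices $x_{b,j}$ for $b\ne a$,
\end{itemize}
minus anything in $C_a$. By construction $U_a$ is disjoint from $C_a$, and $|U_a|=O_{r,t}(1)$, which is well below $k_{\rm aux}$.

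\emph{Heavy package.} Apply the robustness clause of Proposition~\ref{prop:heavy-package} with forbidden set $R_a\cup U_a$. The edge $T_a$ must avoid $R_a$. I would arrange this by also deleting $R_a$ from $Y_{a,j}$ at the selection stage; this is harmless because $X_{a,j}\subseteq V_j\setminus(R_a\cup C_a)$ already. We also need $x_{a,j}\notin U_a$, which holds by distinctness. The proposition yields a copy of $\mathcal F_r$ in $\mathcal H-(R_a\cup U_a)$ containing $C_a$ with transversal edge $T_a$.

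\emph{Bad-edge package.} Apply the extension clause of Lemma~\ref{lem:bad-package} with $U=U_a$, which satisfies $U_a\cap e_a=\varnothing$. Here $T_a=\{u_a\}\cup\{x_{a,j}:j\ne i_0\}$ is a present crossing edge with $x_{a,j}\in X_j(e_a)\setminus U_a$. The lemma gives a fan with petal $e_a$ and transversal edge $T_a$ that avoids $U_a$.

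\emph{Conclusion.} Each new fan avoids the earlier fans, the later seeds, and all other transversal vertices. Later fans are built to avoid every vertex of the fans already constructed. Hence the $t$ fans are vertex-disjoint. Their transversal edges, together with all blowup transversal edges supplied by the selection step, give a copy of $\mathcal F_{r,t}$.

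The main obstacle is the bookkeeping that keeps the seeds, the prescribed vertices and the selected transversal vertices of different packages disjoint, and that makes each forbidden set satisfy the hypotheses of the extension statements. This is why $C_{\rm seed}$ and $R_a$ are removed from the candidate sets before selection. One subtle case is when a heavy package's seed contains some $u_b$ or meets some $e_b$; pairwise disjointness of the seed sets rules this out.
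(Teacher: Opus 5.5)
Your proposal is correct and is essentially the paper's proof. You remove $C_{\rm seed}$ from the non-singleton candidate sets and apply Lemma~\ref{lem:transversal-selection} with $\eta/2$, $p=t$ and $A$ the set of bad-edge indices. You then embed the fans greedily, forbidding earlier fans, later seeds and the other selected transversal vertices, and invoke Proposition~\ref{prop:heavy-package} or Lemma~\ref{lem:bad-package} at each step. Your extra deletion of $R_a$ is redundant, as you note yourself, and adding $x_{b,j}$ for $b<a$ to $U_a$ changes nothing since those vertices already lie in earlier fans.
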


\begin{proof}
Put
\[
        C_{\rm seed}:=\bigcup_{a=1}^t C_a.
\]
For each bad-edge package $a$, keep the singleton candidate set
$Y_{a,i_0}:=\{u_a\}$. For all other pairs $(a,j)$, set
\[
        Y_{a,j}:=X_{a,j}\setminus C_{\rm seed}.
\]
Since $|C_{\rm seed}|=O_{r,t}(1)$, every non-singleton set $Y_{a,j}$
has size at least $\eta n/2$ for all sufficiently large $n$. The
prescribed vertices $u_a$ are distinct, because the seed sets are
pairwise disjoint. Applying
Lemma~\ref{lem:transversal-selection}, with parameter $\eta/2$, $p=t$, and
$A$ equal to the set of bad-edge indices, gives vertices $x_{a,j}\in Y_{a,j}$
such that the selected vertices are distinct within each part and
\[
        \{x_{a_1,1},\ldots,x_{a_r,r}\}\in \mathcal H
        \qquad\text{for all }(a_1,\ldots,a_r)\in[t]^r.
\]
We embed the fans in the order $a=1,\dots,t$. At step $a$, let $U_a$ be the
union of the vertices of the fans already embedded, the later seed sets
$C_b$ with $b>a$, and the later selected transversal vertices
$x_{b,j}$ with $b>a$ and $j\in[r]$. Then $U_a\cap C_a=\varnothing$. Indeed,
earlier fans were embedded while avoiding the later seed sets, the seed sets
are pairwise disjoint, and every later selected vertex either avoids
$C_{\rm seed}$ or is the prescribed vertex in its own seed set. Also
$|U_a|\le k_{\rm aux}$, and, in the heavy case,
$|R_a\cup U_a|\le k_{\rm aux}$.
If $a$ is of heavy type, then $T_a:=\{x_{a,1},\ldots,x_{a,r}\}$
is an edge of $\mathcal H-(R_a\cup U_a)$ and satisfies
$x_{a,j}\in X_{a,j}\setminus U_a$ for every $j\in[r]$. Hence
Proposition~\ref{prop:heavy-package}, applied with its original set $R_a$
and with forbidden set $U_a$, embeds the $a$th fan.
If $a$ is of bad-edge type, then $T_a:=\{u_a\}\cup\{x_{a,j}:j\ne i_0\}$
is an edge of $\mathcal H-U_a$ and satisfies
$x_{a,j}\in X_j(e_a)\setminus U_a$ for every $j\ne i_0$. Since
$U_a\cap e_a=\varnothing$, Lemma~\ref{lem:bad-package} embeds the $a$th fan
with fixed petal $e_a$ and transversal edge $T_a$.
The fans are vertex-disjoint by the choice of the forbidden sets $U_a$, and the
selected vertices provide all blowup transversal edges. Hence these fans form
a copy of $\mathcal F_{r,t}$.
\end{proof}

\subsection{Bad edges outside the heavy vertices}

It remains to show that bad edges disjoint from the heavy vertices cannot form
a large structure. A matching of such bad edges would provide enough
bad-edge packages to assemble a forbidden semi-blowup fan.

Let $\nu_B$ be the maximum size of a matching of bad edges disjoint from $W$.

\begin{lemma}\label{lem:outside-matching}
Assume $\Delta_n\ge \Delta_{n-1}$. Then the maximum size $\nu_B$ of a matching of
bad edges disjoint from $W$ satisfies $\nu_B<rt$.
\end{lemma}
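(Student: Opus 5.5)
We argue by contradiction: suppose there are $rt$ pairwise disjoint bad edges $e_1,\dots,e_{rt}$, all disjoint from $W$. We want to extract $t$ of them that form compatible bad-edge packages and then apply Lemma~\ref{lem:assembly}.

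\textbf{Pigeonhole on the repeated part.} Each bad edge $e$ is non-crossing, so some part $V_i$ satisfies $|e\cap V_i|\ge 2$. Assign to each $e_a$ one such index $i(e_a)\in[r]$. Since there are $rt$ edges and only $r$ possible indices, some $i_0$ is assigned to at least $t$ of them. Relabel these edges as $e_1,\dots,e_t$.

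\textbf{Building the packages.} For each $a\in[t]$, choose distinct $c_a,u_a\in e_a\cap V_{i_0}$ and apply Lemma~\ref{lem:bad-package}. This gives a bad-edge package with seed set $C_a=e_a$, prescribed vertex $u_a$, and candidate sets $X_{a,j}=X_j(e_a)$ for $j\ne i_0$. These candidate sets have size at least $\gamma_b n$ and are disjoint from $e_a$. Moreover,
\[
d_M(u_a)\le a_{\rm miss}(\delta+\alpha^{1/r})n^{r-1}+O_{r,t}(n^{r-2})\le 2a_{\rm miss}(\delta+\alpha^{1/r})n^{r-1}
\]
for large $n$. So the hypothesis of Lemma~\ref{lem:assembly} holds with $\zeta=2a_{\rm miss}(\delta+\alpha^{1/r})$ and $\eta=\gamma_b$. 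By the choice of $\delta$ and $\alpha$ in \eqref{eq:delta-hierarchy} and \eqref{eq:alpha-hierarchy}, $\zeta$ is small enough for Lemma~\ref{lem:transversal-selection} with $\eta=\gamma_b/2$, which is what Lemma~\ref{lem:assembly} uses internally.

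\textbf{Assembly.} The seed sets $C_a=e_a$ are pairwise disjoint because the matching is. All packages are bad-edge packages with the same repeated coordinate $i_0$. Every non-singleton candidate set has size at least $\gamma_b n$. Lemma~\ref{lem:assembly} therefore yields a copy of $\mathcal F_{r,t}$ in $\mathcal H$, contradicting $\mathcal F_{r,t}$-freeness. Hence $\nu_B<rt$.

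\textbf{Main obstacle.} The delicate point is the interaction between packages, and it is entirely absorbed by Lemmas~\ref{lem:transversal-selection} and~\ref{lem:assembly}. A prescribed vertex $u_b$ of one package might lie in another package's candidate set $X_{a,j}$. This cannot happen for $j\ne i_0$, since $u_b\in V_{i_0}$. Other seed vertices are removed when $C_{\rm seed}$ is subtracted in the proof of Lemma~\ref{lem:assembly}.

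The only thing to check here is that the smallness of $\zeta$ really follows from the constant hierarchy. The concern is that $a_{\rm miss}$ depends only on $r$ and $t$, whereas $\delta$ is chosen after it. This is exactly what the stipulation on $s(\delta+\alpha^{1/r})$ for fixed constants $s=s(r,t)$ guarantees.
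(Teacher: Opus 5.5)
Your proposal is correct and follows essentially the same route as the paper. You pigeonhole on the repeated part to obtain $t$ disjoint bad edges with a common coordinate $i_0$, turn each one into a bad-edge package via Lemma~\ref{lem:bad-package}, and conclude with Lemma~\ref{lem:assembly}. The only difference is cosmetic: you absorb the $O_{r,t}(n^{r-2})$ error into the constant $\zeta=2a_{\rm miss}(\delta+\alpha^{1/r})$, whereas the paper keeps an $n$-dependent $\zeta_n$, and the constant hierarchy covers both versions.
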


\begin{proof}
Suppose there is a matching of size at least $rt$ consisting of bad edges
disjoint from $W$. Each such bad edge has at least one repeated part. By the
pigeonhole principle, there are $t$ pairwise disjoint bad edges disjoint from
$W$, say $e_1,\dots,e_t$, with the same repeated coordinate $i_0$. For each $a$, choose distinct vertices
$c_a,u_a\in e_a\cap V_{i_0}$ and apply Lemma~\ref{lem:bad-package}. Thus
the $t$ indices are of bad-edge type, all with repeated coordinate $i_0$; their
non-singleton candidate sets have size at least $\gamma_b n$, and the prescribed
vertices satisfy
\eqref{eq:missing-ue}. Equivalently, their missing degrees are at most
$\zeta_n n^{r-1}$ with
\[
        \zeta_n=a_{\rm miss}(\delta+\alpha^{1/r})+O_{r,t}(n^{-1}).
\]
By the hierarchy and the choice of $n$, we have
$\zeta_n\ll_{r,t,\gamma_b}1$. Lemma~\ref{lem:assembly}
now gives a copy of $\mathcal F_{r,t}$, a contradiction.
\end{proof}

\begin{cor}\label{cor:outside-count}
Assume $\Delta_n\ge \Delta_{n-1}$, and let $\mathcal B_0$ be the family of bad
edges disjoint from $W$. Then
\[
        |\mathcal B_0|\le c_{\rm bad}\delta n^{r-1}.
\]
\end{cor}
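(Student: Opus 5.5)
We derive the bound from Lemma~\ref{lem:outside-matching} together with a maximal-matching covering argument. Let $\mathcal M\subseteq\mathcal B_0$ be a maximal matching of bad edges disjoint from $W$. By Lemma~\ref{lem:outside-matching}, $|\mathcal M|\le\nu_B<rt$. Put $V(\mathcal M):=\bigcup_{f\in\mathcal M}f$, so that $|V(\mathcal M)|\le r(rt-1)<r^2t$. By maximality of $\mathcal M$, every edge of $\mathcal B_0$ meets $V(\mathcal M)$: an edge of $\mathcal B_0$ avoiding $V(\mathcal M)$ could be added to $\mathcal M$, contradicting maximality.

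We then count edges of $\mathcal B_0$ through each vertex of $V(\mathcal M)$. Every vertex $v\in V(\mathcal M)$ lies in some edge of $\mathcal B_0$, which is disjoint from $W$, so $v\notin W$. By the definition of $W$, this gives $d_B(v)<\delta n^{r-1}$. Each edge of $\mathcal B_0$ is counted at least once in $\sum_{v\in V(\mathcal M)}d_B(v)$, and therefore
\[
        |\mathcal B_0|\le\sum_{v\in V(\mathcal M)}d_B(v)<|V(\mathcal M)|\,\delta n^{r-1}\le r^2t\,\delta n^{r-1}=c_{\rm bad}\delta n^{r-1}.
\]

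There is no real obstacle here. The one point to check is that vertices of the matching are non-heavy, and this holds automatically because $\mathcal B_0$ consists of edges disjoint from $W$. The hypothesis $\Delta_n\ge\Delta_{n-1}$ enters only through Lemma~\ref{lem:outside-matching}.
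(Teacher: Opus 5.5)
Your proof is correct and follows essentially the same route as the paper: you cover $\mathcal B_0$ by the vertex set of a matching of fewer than $rt$ bad edges (via Lemma~\ref{lem:outside-matching}), observe that these vertices are non-heavy, and sum their bad degrees. Using a maximal rather than a maximum matching is harmless, since its size is still at most $\nu_B$.
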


\begin{proof}
Let $e_1,
\dots,e_s$ be a maximum matching of bad edges disjoint from $W$ and put
$U=e_1\cup\cdots\cup e_s$. By Lemma~\ref{lem:outside-matching}, $s<rt$.
Every bad edge disjoint from $W$ meets $U$; otherwise, the matching was not maximal.
Since every vertex of $U$ lies outside $W$, $d_B(u)<\delta n^{r-1}$ for all
$u\in U$. Therefore
\[
        |\mathcal B_0|\le \sum_{u\in U}d_B(u)
        \le |U|\delta n^{r-1}\le c_{\rm bad}\delta n^{r-1}.
\]
\end{proof}

\subsection{Proof of the exact Tur\'an theorem}

We now combine the preceding estimates to prove the exact result for
$r\ge3$. The argument first forces exactly $t-1$ heavy vertices, then rules out
all bad edges disjoint from the heavy vertices, and finally identifies the extremal graph with
$\mathcal J_{t-1}(n)$.

\begin{proof}[Proof of Theorem~\ref{thm:exact} for $r\ge 3$]
It remains to prove the implication \eqref{eq:gap-step}. Fix $n$ sufficiently large and assume
$\Delta_n\ge \Delta_{n-1}$. Let $\mathcal H$ be an extremal $\mathcal{F}_{r,t}$-free $r$-graph on
$n$ vertices and put
$w:=|W|$.
By Corollary~\ref{cor:heavy-bound}, $w\le t-1$.

Let $\mathcal B_0$ be the family of bad edges disjoint from $W$. By
Corollary~\ref{cor:outside-count},
$|\mathcal B_0|\le c_{\rm bad}\delta n^{r-1}$.
Let $\mathcal{J}(W,\sigma)$ be the $r$-graph obtained by taking all $r$-sets meeting $W$
and all crossing $r$-sets with respect to the partition
$(V_1\setminus W,\dots,V_r\setminus W)$. Every edge of $\mathcal H$ either belongs to
$\mathcal{J}(W,\sigma)$ or is a bad edge disjoint from $W$. Hence
\[
        |\mathcal{H}|\le |\mathcal{J}(W,\sigma)|+c_{\rm bad}\delta n^{r-1}.
\]
The number of edges in $\mathcal{J}(W,\sigma)$ is maximized when the parts
$V_1\setminus W,\ldots,V_r\setminus W$ are balanced, so
\[
        |\mathcal{J}(W,\sigma)|\le |\mathcal{J}_w(n)|.
\]
If $w\le t-2$, then Lemma~\ref{lem:deterministic} gives
\[
        |\mathcal{J}_{t-1}(n)|-|\mathcal{J}_w(n)|\ge c_{r,t}n^{r-1}.
\]
By the hierarchy \eqref{eq:delta-hierarchy}, the constant $\delta$ was chosen
so small that $c_{\rm bad}\delta < c_{r,t}/2$.
Hence, if $w\le t-2$, then
\[
        |\mathcal{H}|
        \le |\mathcal{J}_w(n)|+c_{\rm bad}\delta n^{r-1}
        < |\mathcal{J}_{t-1}(n)|,
\]
contradicting Lemma~\ref{lem:deterministic}. Therefore
$w=t-1$.

It remains to show that $\mathcal B_0=\varnothing$. Suppose not, and choose
$e\in \mathcal B_0$. Let
\[
        \mathcal M_{\mathcal{J}}:=\mathcal{J}(W,\sigma)\setminus \mathcal{H}.
\]
Since all edges of $\mathcal H$ outside $\mathcal{J}(W,\sigma)$ lie in $\mathcal B_0$,
\[
        |\mathcal{H}|=|\mathcal{J}(W,\sigma)|-|\mathcal M_{\mathcal{J}}|+|\mathcal B_0|
        \le |\mathcal{J}_{t-1}(n)|-|\mathcal M_{\mathcal{J}}|+|\mathcal B_0|.
\]
But $|\mathcal{H}|=\ex(n,\mathcal{F}_{r,t})\ge |\mathcal{J}_{t-1}(n)|$, and therefore
\begin{equation}\label{eq:MJ}
        |\mathcal M_{\mathcal{J}}|\le |\mathcal B_0|\le c_{\rm bad}\delta n^{r-1}.
\end{equation}
We first show that every vertex $v\in W$ satisfies
$d_B(v)\ge \lambda_r n^{r-1}$. Let $v\in W\cap V_i$.
Since $w=t-1$, every $r$-set containing $v$ is allowed in $\mathcal{J}(W,\sigma)$. By
\eqref{eq:MJ},
\[
        d_{\mathcal{H}}(v)\ge \binom{n-1}{r-1}-|\mathcal M_{\mathcal{J}}|.
\]
The number of crossing $r$-sets through $v$ is at most
\[
        \Pi_i=\prod_{j\ne i}|V_j|
        =\frac{1}{r^{r-1}}n^{r-1}+O(\alpha^{1/r}n^{r-1}).
\]
Thus, by \eqref{eq:delta-hierarchy} and \eqref{eq:alpha-hierarchy}, for all
sufficiently large $n$,
\[
        d_B(v)
        \ge
        \binom{n-1}{r-1}-\Pi_i-|\mathcal M_{\mathcal{J}}|
        \ge \lambda_r n^{r-1}.
\]
Since $e$ is a bad edge, choose a coordinate $i_0$ with
$|e\cap V_{i_0}|\ge2$ and distinct vertices $c_e,u_e\in e\cap V_{i_0}$.
Applying Lemma~\ref{lem:bad-package} to $e$, we make index $t$ a bad-edge package:
\[
        C_t=e,\qquad X_{t,i_0}=\{u_e\},\qquad
        X_{t,j}=X_j(e)\quad\text{for }j\ne i_0.
\] 
Enumerate $W=\{w_1,\dots,w_{t-1}\}$. We choose the heavy indices
recursively. Once $C_1,\dots,C_{a-1}$ have been chosen, set
\[
        R_a:=e\cup\Bigl(\bigcup_{b<a}C_b\Bigr)\cup\{w_b:b>a\}.
\]
Since $e$ is disjoint from $W$ and earlier seed sets were chosen avoiding all later vertices
$w_b$, we have $w_a\notin R_a$; also $|R_a|\le k_{\rm aux}$. Applying
Proposition~\ref{prop:heavy-package} with $\mu=\lambda_r$ to
$w_a$ and $R_a$ gives the $a$th heavy package:
\[
        C_a=C(w_a;R_a),\qquad X_{a,j}=X_j(w_a;R_a)\quad\text{for }j\in[r].
\]
Thus we have $t-1$ heavy packages and one bad-edge package. Their seed
sets are pairwise disjoint, every non-singleton candidate set has size
at least $\gamma_h(\lambda_r)n$ or $\gamma_b n$, and the only prescribed
vertex $u_e$ satisfies
\[
        d_M(u_e)\le \zeta_n n^{r-1},
        \qquad
        \zeta_n:=a_{\rm miss}(\delta+\alpha^{1/r})+O_{r,t}(n^{-1}).
\]
Let
\[
        \eta:=\eta_*=\frac12\min\{\gamma_h(\lambda_r),\gamma_b\}.
\]
By the hierarchy and the choice of $n$, we have
$\zeta_n\ll_{r,t,\eta}1$. Lemma~\ref{lem:assembly} gives a copy
of $\mathcal F_{r,t}$ in $\mathcal H$, a contradiction. Hence
$\mathcal B_0=\varnothing$.

Therefore every edge of $\mathcal H$ lies in $\mathcal{J}(W,\sigma)$. Since $|W|=t-1$,
\[
        |\mathcal{H}|\le |\mathcal{J}(W,\sigma)|\le |\mathcal{J}_{t-1}(n)|.
\]
The reverse inequality follows from Lemma~\ref{lem:deterministic}, so equality holds
throughout. Hence the parts $V_1\setminus W,\ldots,V_r\setminus W$ are balanced and every edge allowed by the
join is present. Consequently
$\mathcal{H}\cong \mathcal{J}_{t-1}(n)$.
Thus $\Delta_n=0$. As explained above, this proves the exact value and
uniqueness for all sufficiently large $n$.
\end{proof}

\section{Concluding remarks}\label{sec:remarks}

Theorem~\ref{thm:main} shows that stability does not control the
supersaturation function up to any fixed constant factor. The failure already occurs at the first
supersaturation step $q=1$, and is not caused by non-uniqueness or instability
of the extremal construction.

This leaves open the problem of identifying structural hypotheses that rule out
the local concentration phenomenon exploited here. More precisely, one needs
conditions ensuring that the copies created by a single added edge are
sufficiently spread inside the extremal construction, so that deleting a small
compensating family of old edges cannot destroy a positive proportion of them.

\begin{problem}
Find natural structural conditions on a stable non-$r$-partite $r$-graph
$\mathcal F$ which guarantee the following asymptotic local supersaturation
bound: there exist $\delta>0$ and a sequence
$\varepsilon_n\to0$ such that, for all sufficiently large $n$ and all
$1\le q\le \delta n$,
\[
        h_{\mathcal F}(n,q)\ge (1-\varepsilon_n)q\,c(n,\mathcal F).
\]
\end{problem}

\bibliographystyle{abbrv}
\bibliography{ref}

\end{document}